\providecommand{\U}[1]{\protect\rule{.1in}{.1in}}
\newtheorem{theorem}{Theorem}[section]
\newtheorem{lemma}[theorem]{Lemma}
\newtheorem{proposition}[theorem]{Proposition}
\newtheorem{definition}[theorem]{Definition}
\newtheorem{assumption}{Assumption}
\numberwithin{equation}{section}
\newcommand{\tr}{\mathrm{tr}}
\newcommand{\st}{\mathrm{s.\,t.}\,\,}
\newcommand{\bR}{\mathbb{R}}
\newcommand{\bN}{\mathbb{N}}
\newcommand{\cL}{\mathcal{L}}
\newcommand{\cB}{\mathcal{B}}
\newcommand{\cR}{\mathcal{R}}
\newcommand{\tG}{\mathtt{G}}
\newcommand{\tV}{\mathtt{V}}
\newcommand{\tE}{\mathtt{E}}
\newcommand{\Rnp}{\mathbb{R}^{n \times p}} 
\newcommand{\Rpp}{\mathbb{R}^{p \times p}} 
\newcommand{\Rnn}{\mathbb{R}^{n \times n}}  
\newcommand{\Rnm}{\mathbb{R}^{n \times m}}
\newcommand{\Rmn}{\mathbb{R}^{m \times n}}
\newcommand{\barX}{\bar{X}}
\newcommand{\bfX}{\mathbf{X}}
\newcommand{\bfD}{\mathbf{D}}
\newcommand{\Xik}{X_{i,k}}
\newcommand{\Xjk}{X_{j,k}}
\newcommand{\Xikn}{X_{i,k+1}}
\newcommand{\Xikp}{X_{i,k-1}}
\newcommand{\Dik}{D_{i,k}}
\newcommand{\Djk}{D_{j,k}}
\newcommand{\Dikn}{D_{i,k+1}}
\newcommand{\Dikp}{D_{i,k-1}}
\newcommand{\bfXk}{\mathbf{X}_{k}}
\newcommand{\bfXkn}{\mathbf{X}_{k+1}}
\newcommand{\avXk}{\bar{\mathbf{X}}_{k}}
\newcommand{\avXkn}{\bar{\mathbf{X}}_{k+1}}
\newcommand{\avDk}{\bar{\mathbf{D}}_{k}}
\newcommand{\avDkn}{\bar{\mathbf{D}}_{k+1}}
\newcommand{\avHk}{\bar{\mathbf{H}}_{k}}
\newcommand{\avHkn}{\bar{\mathbf{H}}_{k + 1}}
\newcommand{\bfDk}{\mathbf{D}_{k}}
\newcommand{\bfDkn}{\mathbf{D}_{k+1}}
\newcommand{\bfHk}{\mathbf{H}_{k}}
\newcommand{\bfHkn}{\mathbf{H}_{k + 1}}
\newcommand{\bfone}{\mathbf{1}}
\newcommand{\bfW}{\mathbf{W}}
\newcommand{\bfJ}{\mathbf{J}}
\newcommand{\barXk}{\bar{X}_{k}}
\newcommand{\barXkn}{\bar{X}_{k + 1}}
\newcommand{\barDk}{\bar{D}_{k}}
\newcommand{\barHk}{\bar{H}_{k}}
\newcommand{\zz}{^{\top}}
\newcommand{\ff}{_{\mathrm{F}}}
\newcommand{\fs}{^2_{\mathrm{F}}}
\newcommand{\Snp}{\mathcal{S}_{n,p}}
\newcommand{\dkh}[1]{\left(#1\right)}
\newcommand{\hkh}[1]{\left\{#1\right\}}
\newcommand{\jkh}[1]{\left\langle#1\right\rangle}
\newcommand{\norm}[1]{\left\|#1\right\|}
\newcommand{\abs}[1]{\left\lvert #1\right\rvert}
\newcommand{\sym}{\mathrm{sym}}
\newcommand{\grad}{\mathrm{grad}\,}
\newcommand{\iid}{i \in [d]}
\newcommand{\iin}{i \in [n]}
\newcommand{\sumiid}{\sum\limits_{i=1}^d}
\newcommand{\sumjjd}{\sum\limits_{j=1}^d}
\definecolor{Gray}{rgb}{0.5,0.5,0.5}
\newcommand{\Rmnum}[1]{\expandafter\@slowromancap\romannumeral #1@}
\begin{document}

\title{Decentralized Optimization Over the Stiefel Manifold 
	by an Approximate Augmented Lagrangian Function}

\author{
	Lei Wang\thanks{State Key Laboratory of Scientific and Engineering 
		Computing, Academy of Mathematics and Systems Science, Chinese  Academy of 
		Sciences, and University of Chinese Academy of Sciences, China 
		(\href{mailto:wlkings@lsec.cc.ac.cn}{wlkings@lsec.cc.ac.cn}). 
		Research is supported by the National Natural Science Foundation of China 
		(No. 11971466 and 11991020).}
	\and 
	Xin Liu\thanks{State Key Laboratory of Scientific and Engineering 
		Computing, Academy of Mathematics and Systems Science, Chinese Academy of 
		Sciences, 
		and University of Chinese Academy of Sciences, China 
		(\href{mailto:liuxin@lsec.cc.ac.cn}{liuxin@lsec.cc.ac.cn}). 
		Research is supported in part by the National Natural Science Foundation of 
		China (No. 12125108 and 11991021), 
		Key Research Program of Frontier Sciences, 
		Chinese Academy of Sciences (No. ZDBS-LY-7022).}
}

\date{} 
\maketitle

\begin{abstract}
	
	In this paper, we focus on the decentralized optimization problem over the Stiefel manifold,
	which is defined on a connected network of $d$ agents.
	The objective is an average of $d$ local functions,
	and each function is privately held by an agent and encodes its data.
	The agents can only communicate with their neighbors 
	in a collaborative effort to solve this problem.
	In existing methods, 
	multiple rounds of communications are required to guarantee the convergence,
	giving rise to high communication costs.
	In contrast, this paper proposes a decentralized algorithm, called DESTINY,
	which only invokes a single round of communications per iteration.
	DESTINY combines gradient tracking techniques 
	with a novel approximate augmented Lagrangian function.
	The global convergence to stationary points is rigorously established.
	Comprehensive numerical experiments demonstrate that DESTINY
	has a strong potential to deliver a cutting-edge performance
	in solving a variety of testing problems. 
	
\end{abstract}


\section{Introduction}

	This paper is dedicated to providing an efficient approach to
	the decentralized optimization with orthogonality constraints,
	a problem defined on a connected network and solved by $d$ agents cooperatively.
	\begin{equation}\label{eq:opt-stiefel}
		\begin{aligned}
			\min\limits_{X \in \Rnp} \hspace{2mm} 
			& f(X) := \dfrac{1}{d} \sumiid f_i(X) \\
			\st \hspace{3mm} & X\zz X = I_p,
		\end{aligned}
	\end{equation}
	where $I_p$ is the $p \times p$ identity matrix,
	and $f_i: \Rnp \to \bR$ is a local function privately known by the $i$-th agent.
	The set of all $n \times p$ orthogonal matrices 
	is referred to as Stiefel manifold \cite{Stiefel1935},
	which is denoted by $\Snp = \{X \in \Rnp \mid X\zz X = I_p\}$.
	Throughout this paper, 
	we make the following blanket assumptions about these local functions.
	\begin{assumption}\label{asp:smooth}
		Each local objective function $f_i$ is smooth 
		and its gradient $\nabla f_i$ is locally Lipschitz continuous.
	\end{assumption}

	We consider the scenario that the agents can only exchange information 
	with their immediate neighbors through the network,
	which is modeled as a connected undirected graph $\tG = (\tV, \tE)$.
	Here, $\tV = [d] := \{1, 2, \dotsc, d\}$ is composed of all the agents
	and $\tE = \{(i, j) \mid i \text{~and~} j \text{~are connected}\}$ 
	denotes the set of all the communication links.
	Let $X_i \in \Rnp$ be the local copy of the common variable $X$ kept by the $i$-th agent.
	Then the optimization problem \eqref{eq:opt-stiefel} over the Stiefel manifold
	can be equivalently recast as the following decentralized formulation.
	\begin{subequations}\label{eq:opt-dest}
		\begin{align}
			\label{eq:obj-dest}
			\min\limits_{ \{X_i\}_{i=1}^d } \hspace{2mm} 
			& \dfrac{1}{d} \sumiid f_i(X_i) \\
			\label{eq:con-consensus}
			\st \hspace{2.5mm} &  X_i = X_j, \hspace{2mm} (i, j) \in \tE, \\
			\label{eq:con-orth}
			& X_i \in \Snp, \hspace{2mm} \iid.
		\end{align}
	\end{subequations}
	Since the graph $\tG$ is assumed to be connected,
	the constraints \eqref{eq:con-consensus} enforce the consensus condition
	$X_1 = X_2 = \dotsb = X_d$,
	which guarantees the equivalence between \eqref{eq:opt-stiefel} and \eqref{eq:opt-dest}.
	Due to the nonconvexity of Stiefel manifolds,
	it is challenging to design efficient algorithms to solve \eqref{eq:opt-dest}.

\subsection{Broad applications}\label{subsec:broad-applications}

	Problems of the form \eqref{eq:opt-stiefel} that require decentralized computations 
	are found widely in various scientific and engineering areas.
	We briefly describe several representative applications of it below.
	
	\paragraph{Application 1: Principal Component Analysis.}
	The principal component analysis (PCA) is a basic and ubiquitous tool for dimensionality reduction 
	serving as a critical procedure or preprocessing step 
	in numerous statistical and machine learning tasks.
	Mathematically, PCA amounts to solving the following optimization problem.
	\begin{equation}\label{eq:opt-pca}
		\begin{aligned}
			\min\limits_{X \in \Rnp} \hspace{2mm} 
			& -\frac{1}{2d} \sumiid \tr \dkh{ X\zz A_i A_i\zz X} \\
			\st \hspace{3mm} & X \in \Snp,
		\end{aligned}
	\end{equation}
	where $A = [A_1, \dotsc, A_d] \in \Rnm$ is the data matrix 
	consisting of $m$ samples with $n$ features.
	In the decentralized setting, 
	$A_i \in \bR^{n \times m_i}$ represents the local data stored in the $i$-th agent
	such that $m = m_1 + \dotsb + m_d$.
	This is a natural scenario since each sample is a column of $A$.

	\paragraph{Application 2: Orthogonal Least Squares Regression.}
	The orthogonal least squares regression (OLSR) proposed in \cite{Zhao2016}
	is a supervised learning method for feature extraction in linear discriminant analysis.
	Suppose we have $m$ samples with $n$ features drawn from $p$ classes.
	Then OLSR identifies an orthogonal transformation $X \in \Snp$ by solving the following problem.
	\begin{equation}\label{eq:opt-olsr}
		\begin{aligned}
			\min\limits_{X \in \Rnp} \hspace{2mm} 
			& \frac{1}{d} \sumiid \norm{C_i\zz X - D_i\zz}\fs \\
			\st \hspace{3mm} & X \in \Snp,
		\end{aligned}
	\end{equation}
	where $C = [C_1, \dotsc, C_d] \in \Rnm$
	and $D = [D_1, \dotsc, D_d] \in \bR^{p \times m}$ 
	are two matrices constructed by samples.
	Under the decentralized scenario,
	$C_i \in \bR^{n \times m_i}$ and $D_i \in \bR^{p \times m_i}$ 
	are local data and corresponding class indicator matrix 
	stored in the $i$-th agent, respectively.
	Here, $m = m_1 + \dotsb + m_d$.
	As claimed in \cite{Zhao2016},
	enforcing the orthogonality on transformation matrices 
	is conducive to preserve local structure 
	and discriminant information.
	
	\paragraph{Application 3: Sparse Dictionary Learning.}
	Sparse dictionary learning (SDL) is a classical unsupervised representation learning method,
	which finds wide applications in signal and image processing 
	due to its powerful capability of exploiting low-dimensional structures in high-dimensional data.
	Motivated by the fact that maximizing a high-order norm 
	promotes spikiness and sparsity simultaneously,
	the authors of \cite{Zhai2020} introduce the following $\ell_4$-norm maximization problem
	with orthogonality constraints for SDL.
	\begin{equation}\label{eq:opt-sdl}
		\begin{aligned}
			\min\limits_{X \in \Rnp} \hspace{2mm} 
			& -\frac{1}{4d} \sumiid \norm{X\zz B_i}_4^4 \\
			\st \hspace{3mm} & X \in \Snp,
		\end{aligned}
	\end{equation}
	where $B = [B_1, \dotsc, B_d] \in \Rnm$ is the data matrix 
	consisting of $m$ samples with $n$ features,
	and the $\ell_4$-norm is defined as $\norm{Y}_4^4 = \sum_{i, j} Y(i, j)^4$.
	In the decentralized setting, 
	$B_i \in \bR^{n \times m_i}$ is the local data stored in the $i$-th agent
	such that $m = m_1 + \dotsb + m_d$.

\subsection{Literature survey}

	Recent years have witnessed the development of algorithms 
	for optimization problems over the Stiefel manifold, 
	including gradient descent approaches 
	\cite{Manton2002,Nishimori2005,Abrudan2008,Absil2008},
	conjugate gradient approaches \cite{Edelman1998,Sato2016dai,Zhu2017riemannian},
	constraint preserving updating schemes \cite{Wen2013,Jiang2015},
	Newton methods \cite{Hu2018,Hu2019},
	trust-region methods \cite{Absil2006},
	multipliers correction frameworks \cite{Gao2018,Wang2021multipliers},
	augmented Lagrangian methods \cite{Gao2019},
	sequential linearized proximal gradient algorithms \cite{Xiao2021penalty},
	and so on.
	Moreover, exact penalty models are constructed for this special type of problems,
	such as PenC \cite{Xiao2020} and ExPen \cite{Xiao2021solving}.
	Efficient infeasible algorithms can be designed based on these two models.
	Unfortunately, none of the above algorithms take the decentralized optimization into account.
	
	Recently, several distributed ADMM algorithms designed for centralized networks 
	have emerged to solve various variants of PCA problems
	\cite{Wang2020distributed,Wang2021communication},
	which pursue a consensus on the subspaces spanned by splitting variables.
	This strategy relaxes feasibility restrictions and significantly improves the convergence.
	However, these algorithms can not be straightforwardly extended to the decentralized setting.

	Decentralized optimization in the Euclidean space
	has been adequately investigated in recent decades.
	Most existing algorithms adapt classical Euclidean methods to the decentralized setting.
	For instance, in the decentralized gradient descent (DGD) method \cite{Nedic2009},
	each agent combines the average of its neighbors with a local negative gradient step.
	DGD can only converge to a neighborhood of a solution with a constant stepsize,
	and it has to employ diminishing stepsizes to obtain an exact solution 
	at the cost of slow convergence \cite{Yuan2016,Zeng2018}.
	To cope with this speed-accuracy dilemma of DGD,
	EXTRA \cite{Shi2015} corrects the convergence error 
	by introducing two different mixing matrices 
	(to be defined in Subsection \ref{subsec:preliminaries})
	for two consecutive iterations and leveraging their difference to update local variables.
	And decentralized gradient tracking (DGT) algorithms 
	\cite{Xu2015,Qu2017,Nedic2017}
	introduce an auxiliary variable to maintain a local estimate 
	of the global gradient descent direction.
	Both EXTRA and DGT permit the use of a constant stepsize 
	to attain an exact solution without sacrificing the convergence rate.
	The above three types of algorithms are originally tailored for unconstrained problems.
	Their constrained versions, such as \cite{Ram2010,Di2016,Scutari2019},
	are still incapable of solving \eqref{eq:opt-dest} 
	since they can only deal with convex constraints.
	Another category of methods is based on the primal-dual framework,
	including decentralized augmented Lagrangian methods \cite{Hong2017,Hajinezhad2019}
	and decentralized ADMM methods \cite{Chang2015multi,Ling2015,Liu2019c}.
	In the presence of nonconvex constraints,
	these algorithms are not applicable to \eqref{eq:opt-dest} either.

	As a special case of \eqref{eq:opt-dest},
	a considerable amount of effort has been devoted to 
	developing algorithms for decentralized PCA problems.
	For instance, a decentralized ADMM algorithm is proposed in \cite{Schizas2015}
	based on the low-rank matrix approximation model.
	And \cite{Gang2019} develops a decentralized algorithm called DSA
	and its accelerated version ADSA by adopting a similar update formula of DGD 
	to an early work of training neural networks \cite{Sanger1989}.
	The linear convergence of DSA to a neighborhood of a solution
	is established in \cite{Gang2022linearly},
	but ADSA does not yet have theoretical guarantees.
	Moreover, \cite{Gang2021fast} further introduces a decentralized version of
	an early work for online PCA \cite{Krasulina1970method},
	which enjoys the exact and linear convergence.
	Recently, \cite{Ye2021} combines gradient tracking techniques with subspace iterations
	to develop an exact algorithm DeEPCA with a linear convergence rate.
	Generally speaking, 
	it is intractable to extend these algorithms to generic cases.
	
	In a word, all the aforementioned approaches 
	can not be employed to solve \eqref{eq:opt-dest}.
	In fact, the investigation of decentralized optimization over the Stiefel manifold 
	is relatively limited.
	To the best of our knowledge, there is only one work \cite{Chen2021decentralized}
	targeting at this specific type of problem,
	which extends DGD and DGT from the Euclidean space to the Stiefel manifold.
	The resulting algorithms are called DRSGD and DRGTA, respectively.
	Analogous to the Euclidean setting,
	DRGTA can achieve the exact convergence with constant stepsizes,
	but DRSGD can not.
	For these two algorithms,
	a communication bottleneck arises from multiple consensus steps,
	namely, multiple rounds of communications per iteration,
	making them difficult to acquire a practical application in large-scale networks.
	As illustrated in \cite{Chen2021accelerating}, 
	one local consensus step requires approximately one-third or one-half
	 the communication overheads of a global averaging operation under a certain practical scenario.
	Consequently, multiple consensus steps will incur intolerably high communication budgets,
	thereby hindering the scalability of decentralized algorithms.
	Furthermore, an extra consensus stepsize, which is less than $1$, 
	is introduced in DRSGD and DRGTA, significantly slowing down the convergence.
	In fact, the above two requirements are imposed to 
	control the consensus error on Stiefel manifolds.
	The nonconvexity of $\Snp$ triggers off an enormous difficulty
	in seeking a consensus on it.

\subsection{Contributions}

	This paper develops an infeasible decentralized approach, 
	called DESTINY, to the problem \eqref{eq:opt-dest},
	which invokes only a single round of communications per iteration
	and gets rid of small consensus stepsizes.
	These two goals are achieved with the help of 
	a novel approximate augmented Lagrangian function.
	It enables us to use unconstrained algorithms to solve the problem \eqref{eq:opt-dest}
	and hence tides us over the obstacle to reaching a consensus on Stiefel manifolds.
	The potential of this function is not limited to the decentralized setting.
	It can serve as a penalty function for optimization problems over the Stiefel manifold,
	which is of independent interest.
	
	Although DESTINY is based on the gradient tracking technique,
	existing convergence results are not applicable.
	We rigorously establish the global convergence to stationary points
	with the worst case complexity under rather mild conditions.
	All the theoretical analysis is applicable to the special case of $d = 1$.
	Therefore, as a by-product, DESTINY boils down to a new algorithm
	in the non-distributed setting.
	
	Preliminary numerical experiments, conducted under the distributed environment,
	validate the effectiveness of our approximate augmented Lagrangian function
	and demonstrate the robustness to penalty parameters.
	In addition, for the first time, we find that BB stepsizes 
	can accelerate the convergence of decentralized algorithms 
	for optimization problems over the Stiefel manifold.
	Finally, extensive experimental results indicate that our algorithm
	requires far fewer rounds of communications than 
	what are required by existing peer methods.
	
\subsection{Notations}

	The Euclidean inner product of two matrices \(Y_1, Y_2\) 
	with the same size is defined as \(\jkh{Y_1, Y_2}=\tr(Y_1\zz Y_2)\),
	where $\tr (B)$ stands for the trace of a square matrix $B$.
	And the notation $\sym (B) = (B + B\zz) / 2$ represents 
	the symmetric part of $B$.
	The Frobenius norm and 2-norm of a given matrix \(C\) 
	is denoted by \(\norm{C}\ff\) and \(\norm{C}_2\), respectively. 
	The $(i, j)$-th entry of a matrix $C$ is represented by $C(i, j)$.
	The notation $\bfone_d$ stands for the $d$-dimensional vector of all ones.
	The Kronecker product is denoted by $\otimes$.
	Given a differentiable function \(g(X) : \Rnp \to \bR\), 
	the gradient of \(g\) with respect to \(X\) is represented by \(\nabla g(X)\).
	Further notation will be introduced wherever it occurs.
	
\subsection{Outline}

	The rest of this paper is organized as follows. 
	In Section \ref{sec:algorithm}, 
	we introduce an approximate augmented Lagrangian function
	and devise a novel decentralized algorithm for \eqref{eq:opt-dest}.
	Moreover, Section \ref{sec:convergence} is dedicated to investigating
	the convergence properties of the proposed algorithm.
	Numerical experiments on a variety of test problems 
	are presented in Section \ref{sec:numerical} 
	to evaluate the performance of the proposed algorithm.
	We draw final conclusions and discuss some possible future developments in the last section.

\section{Algorithm Development}\label{sec:algorithm}

	In this section, some preliminaries are first introduced,
	including first-order stationarity conditions and mixing matrices.
	Then we propose a novel approximate augmented Lagrangian function
	to deal with nonconvex orthogonality constraints.
	From this foundation, we further devise an efficient infeasible decentralized approach
	to optimization problems over the Stiefel manifold.

\subsection{Preliminaries}\label{subsec:preliminaries}

	To facilitate the narrative, we first introduce the stationarity conditions.
	As discussed in \cite{Absil2008,Wang2021multipliers},
	the definition of first-order stationary points of \eqref{eq:opt-stiefel}
	can be stated as follows.
	
	\begin{definition}
		A point $X \in \Snp$ is called a first-order stationary point of \eqref{eq:opt-stiefel}
		if and only if 
		\begin{equation*}
			0 = \mathrm{grad}\, f(X) 
			:= \nabla f(X) - X \sym \dkh{X\zz \nabla f(X)}.
		\end{equation*}
		Here, $\mathrm{grad}\, f(X)$ denotes 
		the Riemannian gradient \cite{Absil2008} of $f$ at $X$. 
	\end{definition}

	In addition, the structure of the communication network 
	is associated with a mixing matrix $W = [W(i, j)] \in \bR^{d \times d}$,
	which conforms to the underlying graph topology.
	The mixing matrix plays an essential role in the development of decentralized algorithms.
	We assume that $W$ satisfies the following properties,
	which are standard in the literature.
	
	\begin{assumption}\label{asp:network}
		The mixing matrix $W$ satisfies the following conditions.
		
		\begin{enumerate}
			
			\item $W$ is symmetric.
			
			\item $W$ is doubly stochastic, namely, $W$ is nonnegative 
			and $W \mathbf{1}_d = W\zz \mathbf{1}_d = \mathbf{1}_d$.
			
			\item $W(i, j) = 0$ if $i \neq j$ and $(i, j) \notin \tE$.
			
		\end{enumerate}
		
	\end{assumption}
	
	Typically, decentralized algorithms rely on the mixing matrix $W$ 
	to diffuse information throughout the network,
	which has a few common choices.
	We refer interested readers to \cite{Xiao2004,Shi2015} for more details.
	According to the Perron-Frobenius Theorem \cite{Pillai2005perron},
	we know that the eigenvalues of $W$ lie in $(-1, 1]$ and
	\begin{equation}\label{eq:lambda}
		\lambda := \norm{W - \bfone_d \bfone_d\zz / d}_2 < 1.
	\end{equation}
	The parameter $\lambda$ measures the connectedness of networks
	and plays a prominent part in the analysis of gradient tracking based methods.

\subsection{Approximate augmented Lagrangian function}\label{subsec:penalty}
	
	As mentioned earlier, it is quite difficult to seek a consensus on the Stiefel manifold,
	which motivates us to devise an infeasible decentralized algorithm 
	for the problem \eqref{eq:opt-dest}.
	The renowned augmented Lagrangian method is a natural choice.
	Therefore, the story behind our algorithm starts from the augmented Lagrangian function
	of optimization problems over the Stiefel manifold,
	which is of the following form.
	\begin{equation*}\label{eq:lagrangian} 
		\cL (X, \Lambda) 
		:= f (X) - \dfrac{1}{2}\jkh{\Lambda, X\zz X - I_p} 
		+ \dfrac{\beta}{4} \norm{X\zz X - I_p}\fs,
	\end{equation*}
	where $\Lambda \in \Rpp$ consists of the corresponding Lagrangian multipliers
	and $\beta > 0$ is the penalty parameter.
	However, the presence of multipliers brings the difficulties 
	in both algorithmic design and theoretical analysis.

	In fact, the Lagrangian multipliers $\Lambda$, as discussed in \cite{Wang2021multipliers}, 
	admit the following closed-form expression 
	at any first-order stationary point $X$ of \eqref{eq:opt-stiefel}.
	\begin{equation}\label{eq:multiplier}
		\Lambda = \sym \dkh{X\zz \nabla f (X)}.
	\end{equation}
	The second term of $\cL (X, \Lambda)$ with the above expression of $\Lambda$
	can be rewritten as the following form.
	\begin{equation*}
		\jkh{\Lambda, X\zz X - I_p} = \jkh{\nabla f (X), X X\zz X - X}.
	\end{equation*}
	Then in light of the Taylor expansion, we have the following approximation.
	\begin{equation}\label{eq:approximate}
		f (X X\zz X) - f (X) \approx \jkh{\nabla f (X), X X\zz X - X}.
	\end{equation}
	Now we replace the second term of $\cL (X, \Lambda)$ 
	with the left hand side of \eqref{eq:approximate},
	and then obtain the following function.
	\begin{equation*}
		h (X) := g (X) + \beta b (X),
	\end{equation*}
	where
	\begin{equation*}
		g (X) := \dfrac{3}{2}\, f (X) - \dfrac{1}{2}\, f (X X\zz X),
		\mbox{~and~}
		b (X) := \dfrac{1}{4} \norm{X\zz X - I_p}\fs.
	\end{equation*}
	We call it \textit{approximate augmented Lagrangian function}.
	The multiplier is updated implicitly by the closed form expression \eqref{eq:multiplier}
	in this function $h (X)$,
	and hence, we can design an infeasible algorithm entirely in the primal formulation.
	
	Now we discuss the gradient of $h (X)$ that can be easily computed as follows.
	\begin{equation*}
		\nabla h (X) = \nabla g (X) + \beta X \dkh{X\zz X - I_p},
	\end{equation*}
	where
	\begin{equation*}
		\nabla g (X) 
		= \dfrac{3}{2} \nabla f (X) - \dfrac{1}{2} \nabla f (X X\zz X) X\zz X 
		- X \sym \dkh{X\zz \nabla f (X X\zz X)}.
	\end{equation*}
	Clearly, the gradient of $f$ should be evaluated twice
	at different points $X$ and $X X\zz X$ in $\nabla g (X)$,
	which gives rise to additional computational costs.
	In order to alleviate this difficulty,
	we use the following direction to approximate $\nabla g (X)$.
	\begin{equation*}
		G (X) = \dfrac{3}{2} \nabla f (X X\zz X) - \dfrac{1}{2} \nabla f (X X\zz X) X\zz X
		- X \sym \dkh{X\zz \nabla f (X X\zz X)}.
	\end{equation*}
	For convenience, we denote
	\begin{equation*}
		H (X) = G (X) + \beta X \dkh{X\zz X - I_p}.
	\end{equation*}
	As an approximation of $\nabla h (X)$,
	$H (X)$ possesses the following desirable properties.
	
	\begin{lemma} \label{le:expen}
		Let $\cR := \{X \in \Rnp \mid \|X\zz X - I_p\|\ff \leq 1 / 6\}$
		be a bounded region 
		and $C_0 := \sup_{X \in \cR} \norm{\nabla f (X X\zz X)}\ff$ 
		be a positive constant.
		Then if $\beta \geq (6 + 21 C_0) / 5$, we have
		\begin{equation*}
			\norm{H (X)}\fs \geq \norm{G (X)}\fs 
			+ \beta \norm{X\zz X - I_p}\fs,
		\end{equation*}
		for any $X \in \cR$.
	\end{lemma}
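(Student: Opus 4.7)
The plan is to exploit the natural expansion
\begin{equation*}
\norm{H(X)}\fs = \norm{G(X)}\fs + 2\beta\jkh{G(X), XS} + \beta^2 \norm{XS}\fs,
\end{equation*}
with $S := X\zz X - I_p$, so that after dividing by $\beta > 0$ the claim reduces to
\begin{equation*}
2 \jkh{G(X), XS} + \beta \norm{XS}\fs \geq \norm{S}\fs.
\end{equation*}
I will extract a lower bound on $\jkh{G(X), XS}$ that is quadratic (rather than linear) in $\norm{S}\ff$, a linear lower bound on $\norm{XS}\fs$ in $\norm{S}\fs$, and then choose $\beta$ large enough to absorb the negative part of the cross term.

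For the cross term, set $F := \nabla f(X X\zz X)$ and $P := X\zz F$. Expanding each of the three summands of $G(X)$ and repeatedly substituting $X\zz X = I_p + S$, together with the cyclic property of the trace and the symmetry of $S$, each of $\jkh{F, XS}$, $\jkh{F X\zz X, XS}$, and $\jkh{X \sym(P), XS}$ is rewritten as a combination of $\jkh{\sym(P), S}$ and $\tr(\sym(P) S^2)$. The coefficients $\tfrac{3}{2},\, -\tfrac{1}{2},\, -1$ in the definition of $G(X)$ are tuned precisely so that the linear-in-$S$ contributions $\jkh{\sym(P), S}$ cancel exactly, leaving
\begin{equation*}
\jkh{G(X), XS} = -\tfrac{3}{2}\, \tr\dkh{S^2 \sym(P)}.
\end{equation*}
Since $S^2 \succeq 0$ is symmetric, the standard trace inequality gives $|\tr(S^2 \sym(P))| \leq \norm{\sym(P)}_2 \norm{S}\fs \leq \norm{X}_2 \norm{F}\ff \norm{S}\fs \leq \norm{X}_2 C_0 \norm{S}\fs$ on $\cR$.

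For the quadratic term, writing $\norm{XS}\fs = \tr(S X\zz X S) = \norm{S}\fs + \tr(S^3)$ and using $|\tr(S^3)| \leq \norm{S}_2 \norm{S}\fs \leq \tfrac{1}{6}\norm{S}\fs$ on $\cR$ yields $\norm{XS}\fs \geq \tfrac{5}{6} \norm{S}\fs$. The same control on $\norm{S}_2$ gives $\norm{X}_2^2 \leq 1 + \norm{S}_2 \leq \tfrac{7}{6}$, hence $\norm{X}_2 \leq \tfrac{7}{6}$. Combining the two estimates, the reduced inequality follows from
\begin{equation*}
\ly \tfrac{5\beta}{6} - 3 \norm{X}_2 C_0 \ry \norm{S}\fs \geq \norm{S}\fs,
\end{equation*}
which holds whenever $\beta \geq (6/5)\dkh{1 + 3\norm{X}_2 C_0}$; invoking $\norm{X}_2 \leq \tfrac{7}{6}$ shows that the hypothesis $\beta \geq (6 + 21 C_0)/5$ is sufficient (the case $S = 0$ being trivial).

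The main obstacle is the trace identity for $\jkh{G(X), XS}$: it is precisely the exact cancellation of $\jkh{\sym(P), S}$ that forces this inner product to be quadratic in $S$ rather than linear, and this is the algebraic feature that allows $H(X)$ to behave as a genuine penalty function despite the approximation made in passing from $\nabla h(X)$ to $H(X)$. All the remaining estimates are routine consequences of $\norm{S}\ff \leq 1/6$ on $\cR$.
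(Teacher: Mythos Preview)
Your proof is correct and follows essentially the same route as the paper: the same expansion of $\norm{H(X)}\fs$, the same key identity $\jkh{G(X),XS}=-\tfrac{3}{2}\tr\dkh{S^{2}\sym(P)}$ (the paper derives it via a slightly different algebraic manipulation but arrives at the identical expression), the same lower bound $\norm{XS}\fs\geq\tfrac{5}{6}\norm{S}\fs$ (obtained in the paper via $\sigma_{\min}^{2}\geq 5/6$ rather than your $\tr(S^{3})$ computation), and the same use of $\norm{X}_2\leq 7/6$ to close the constants. The only cosmetic difference is that the paper bounds the cross term by $\norm{\sym(P)}\ff\,\norm{S^{2}}\ff$ instead of your $\norm{\sym(P)}_2\,\tr(S^{2})$, and it keeps the factor $\beta$ rather than dividing through; both lead to the identical threshold $\beta\geq(6+21C_0)/5$.
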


	\begin{proof}
		Suppose $\sigma_{\min}$ is the smallest singular value of $X$.
		Then for any $X \in \cR$, we have $\norm{X}_2^2 \leq 7 / 6$ 
		and $\sigma_{\min}^2 \geq 5 / 6$, and hence,
		\begin{equation*}
			\norm{X \dkh{X\zz X - I_p}}\fs 
			\geq \sigma_{\min}^2 \norm{X\zz X - I_p}\fs
			\geq \dfrac{5}{6} \norm{X\zz X - I_p}\fs.
		\end{equation*}
		Moreover, simple algebraic manipulations give us
		\begin{equation*}
			\begin{aligned}
				& \jkh{G (X), X \dkh{X\zz X - I_p}} \\
				& = \jkh{\nabla f (X X\zz X) \dkh{\dfrac{3}{2}I_p - \dfrac{1}{2} X\zz X}, 
					X \dkh{X\zz X - I_p}}
				- \jkh{X \sym \dkh{X\zz \nabla f (X X\zz X)}, X \dkh{X\zz X - I_p}} \\
				& = \jkh{\sym \dkh{X\zz \nabla f (X X\zz X)}, 
					\dkh{X\zz X - I_p} \dkh{\dfrac{3}{2}I_p - \dfrac{1}{2} X\zz X} 
					- X\zz X\dkh{X\zz X - I_p}} \\
				& = - \dfrac{3}{2} \jkh{\sym \dkh{X\zz \nabla f (X X\zz X)}, 
					\dkh{X\zz X - I_p}^2},
			\end{aligned}
		\end{equation*}
		which yields that
		\begin{equation*}
			\begin{aligned}
				\abs{\jkh{G (X), X \dkh{X\zz X - I_p}}}
				& \leq \dfrac{3}{2} \norm{\sym \dkh{X\zz \nabla f (X X\zz X)}}\ff 
				\norm{\dkh{X\zz X - I_p}^2}\ff \\
				& \leq \dfrac{7}{4} C_0 \norm{X\zz X - I_p}\fs.
			\end{aligned}
		\end{equation*}
		Now it can be readily verifies that
		\begin{equation*}
			\begin{aligned}
				\norm{H (X)}\fs 
				& = \norm{G (X)}\fs + 2 \beta \jkh{G (X), X \dkh{X\zz X - I_p}}
				+ \beta^2 \norm{X \dkh{X\zz X - I_p}}\fs \\
				& \geq \norm{G (X)}\fs 
				+ \dfrac{1}{6} \beta \dkh{5 \beta - 21 C_0}\norm{X\zz X - I_p}\fs,
			\end{aligned}
		\end{equation*}
		which together with $\beta \geq (6 + 21 C_0) / 5$ completes the proof.
	\end{proof}
	
	Lemma \ref{le:expen} reveals that the orthogonality violation of $X$
	is controlled by the squared norm of $H (X)$ 
	as long as $X \in \cR$ and $\beta$ is sufficiently large.
	And it is worthy of mentioning that $G (X) = \mathrm{grad}\, f(X)$ when $X \in \Snp$.
	In fact, $h (X)$ can play the part of a penalty function 
	for optimization problems over the Stiefel manifold,
	which is of interest for a future investigation.
	
\subsection{Algorithm description}	

	Inspired by Lemma \ref{le:expen},
	we can devise a decentralized algorithm such that 
	the iterates are restricted in the bounded region $\cR$
	and $H (X)$ asymptotically vanishes.

	Under the decentralized setting, 
	the computation of $H (X)$ can be distributed into $d$ agents as follows.
	\begin{equation*}
		H (X) = \dfrac{1}{d} \sumiid H_i (X),
	\end{equation*}
	where $H_i (X) := G_i (X) + \beta X (X\zz X - I_p)$, and 
	\begin{equation*}
		G_i (X) := \nabla f_i (X X\zz X) \dkh{3 I_p - X\zz X} / 2 
		- X \sym \dkh{X\zz \nabla f_i (X X\zz X)}.
	\end{equation*}
	Clearly, only local gradient $\nabla f_i (X X\zz X)$ 
	is involved in the expression of $H_i (X)$,
	which signifies that the evaluation of $H_i (X)$
	can be accomplished by the $i$-th agent individually. 
	Consequently, $H_i (X)$ can act as a local descent direction.
	We can employ gradient tracking techniques to estimate
	the average of these local descent directions.
	The resulting algorithm is summarized in Algorithm \ref{alg:DESTINY},
	named \textit{\underline{de}centralized \underline{Sti}efel algorithm
	by an approximate augmented Lagra\underline{n}gian penalt\underline{y} function} 
	and abbreviated to DESTINY.
	
	\begin{algorithm}[ht!]
		\caption{DESTINY.} 
		\label{alg:DESTINY}
		
		\KwIn{initial guess $X_{\mathrm{initial}} \in \Snp$, stepsize $\eta > 0$, 
		and penalty parameter $\beta > 0$.}
		
		\For{all $\iid$ in parallel}
		{
			
			Set $k := 0$.
			
			Initialize $\Xik := X_{\mathrm{initial}}$ and $\Dik := H_i (\Xik)$. 
		
			\While{``not converged"}
			{
				
				Update $\Xikn := \sumjjd W(i, j) \dkh{\Xjk - \eta \Djk}$.
				
				Update $\Dikn := \sumjjd W(i, j) \Djk + H_i (\Xikn) - H_i (\Xik)$.
					
				Set $k := k + 1$.
				
			}
		
		}
		
		\KwOut{$\{\Xik\}$.}

	\end{algorithm}
	
	Our algorithm consists of two procedures, namely,
	updating local variables and tracking global descent directions.
	Unlike DRGTA \cite{Chen2021decentralized}, in each procedure,
	our algorithm only performs one consensus step per iteration.
	These two operations can be realized
	in a single round of communications to reduce the latency.
	In addition, our algorithm does not involve an extra consensus stepsize.
	
	For the sake of convenience, we now define the following averages of local variables.
	\begin{multicols}{3}
		
		\begin{itemize}
			
			\item $\barXk = \dfrac{1}{d} \sumiid \Xik$.
			
			\item $\barDk =  \dfrac{1}{d} \sumiid \Dik$.
			
			\item $\barHk =  \dfrac{1}{d} \sumiid H_i (\Xik)$.
			
		\end{itemize}
	
	\end{multicols}
	\noindent Then it is not difficult to check that, for any $k \in \bN$,
	the following relationships hold.
	\begin{equation}\label{eq:avg-iter}
		\barXkn = \barXk - \eta \barDk, 
		\mbox{~and~}\barDk = \barHk.
	\end{equation}
	Moreover, we denote $J = \bfone_d \bfone_d\zz / d \in \bR^{d \times d}$,
	$\bfJ = J \otimes I_n \in \bR^{dn \times dn}$, 
	and $\bfW = W \otimes I_n \in \bR^{dn \times dn}$.
	The following stacked notations are also used in the sequel.
	\begin{itemize}
		
		\item $\bfXk = [X_{1, k}\zz, \dotsc, X_{d, k}\zz]\zz \in \bR^{dn \times p}$,  
		$\avXk = \dkh{\bfone_d \otimes I_n} \barXk = \bfJ \bfXk \in \bR^{dn \times p}$.
		
		\item $\bfDk = [D_{1, k}\zz, \dotsc, D_{d, k}\zz]\zz \in \bR^{dn \times p}$,
		$\avDk = \dkh{\bfone_d \otimes I_n} \barDk = \bfJ \bfDk \in \bR^{dn \times p}$.
		
		\item $\bfHk = [H_1 (X_{1, k})\zz, \dotsc, H_d (X_{d, k})\zz]\zz \in \bR^{dn \times p}$,
		$\avHk = \dkh{\bfone_d \otimes I_n} \barHk = \bfJ \bfHk \in \bR^{dn \times p}$.
		
	\end{itemize}
	By virtue of the above notations, we have 
	$\dkh{\bfW - \bfJ} \avXk = \dkh{\bfW - \bfJ} \avDk = 0$.
	And the main iteration loop of Algorithm \ref{alg:DESTINY} 
	can be summarized as the following compact form.
	\begin{equation*}
		\left\{
		\begin{aligned}
			\bfXkn & = \bfW \dkh{\bfXk - \eta \bfDk}, \\
			\bfDkn & = \bfW \bfDk + \bfHkn - \bfHk.
		\end{aligned}
		\right.
	\end{equation*}

\section{Convergence Analysis}\label{sec:convergence}

	Existing convergence guarantees of gradient tracking based algorithms, 
	such as \cite{Daneshmand2020}, 
	are constructed for globally Lipschitz smooth and coercive functions,
	which are restrictive for \eqref{eq:opt-dest} since $\Snp$ is compact.
	In this section, the global convergence of Algorithm~\ref{alg:DESTINY} 
	is rigorously established under rather mild conditions.
	The objective function is only assumed to be locally Lipschitz smooth 
	(see Assumption \ref{asp:smooth}).
	The worst case complexity is also provided.
	
	All the special constants to be used in this section are listed below.
	We divide these constants into two categories.
	Recall that $\cR$ is a bounded set defined in Lemma \ref{le:expen}
	and $\lambda$ is a constant defined in \eqref{eq:lambda}.
	And we define $\cB := \{X \in \Rnp \mid \norm{X}\ff \leq \sqrt{7dp / 6} + \sqrt{d}\}$.
	\begin{itemize}
		
		\item Category I:
		\begin{equation} \label{eq:constants-1}
			\begin{aligned}
				& M_g = \sup\, \{ \norm{G_i (X)}\ff \mid X \in \cB, \iid \}; \quad
				M_b = \sup\, \{ \norm{\nabla b (X)}\ff \mid X \in \cB\}; \\
				& L_g = \sup\, \hkh{ 
				\dfrac{\norm{G_i (X) - G_i (Y)}\ff}{\norm{X - Y}\ff}
				\bigg| X \neq Y, X \in \cB, Y \in \cB, \iid}; \\
				& L_b = \sup\, \hkh{ 
				\dfrac{\norm{\nabla b (X) - \nabla b (Y)}\ff}{\norm{X - Y}\ff}
				\bigg| X \neq Y, X \in \cB, Y \in \cB}; \quad
				C_1 = \dfrac{\lambda^2 \dkh{1 + \lambda^2}}{1 - \lambda^2}; \\
				&  L_f = \sup\, \hkh{ 
				\dfrac{\norm{\nabla f (X) - \nabla f (X X\zz X)}\ff}{\norm{X\dkh{X\zz X - I_p}}\ff}
				\bigg| X \neq X X\zz X , X \in \cB}; \quad
				\gamma = \dfrac{1 - \lambda^2}{2 \lambda^2}.
			\end{aligned}
		\end{equation}
	
		\item Category II:
		\begin{equation} \label{eq:constants-2}
			\begin{aligned}
				& C_D = \norm{\bar{\mathbf{D}}_{0} - \mathbf{D}_{0}}\ff 
				+ 3 \sqrt{d} / \dkh{1 - \lambda}
				+ \sqrt{d} \dkh{M_g + \beta M_b}; \\
				& C_2 = \dfrac{12 \dkh{1 + \lambda^2} \dkh{L_g + \beta L_b}^2}{1 - \lambda^2}; \quad
				\rho = \dfrac{3}{16 C_2} \dkh{1 - \lambda^2}.
			\end{aligned}
		\end{equation}
	
	\end{itemize}

	The first category of constants defined in \eqref{eq:constants-1}
	is independent of $\beta$,
	while the second category in \eqref{eq:constants-2} is not.
	In order to establish the global convergence of Algorithm \ref{alg:DESTINY},
	we need to impose 
	several mild conditions on $\beta$  and $\eta$.
	To facilitate the narrative, we first state all these conditions here.
	
	\begin{assumption}\label{asp:parameters}
		We make the following assumptions about $\beta$ and $\eta$. 
		\begin{enumerate}
			
			\item The penalty parameter $\beta$ satisfies 
			\begin{equation*}
				\beta > \max\hkh{
					\dfrac{6 + 21 C_0}{5}, \;
					\dfrac{72 (4 + 3 M_g)}{5}, \;
					\dfrac{1}{L_b}, \;
					22 L_f^2
				}.
			\end{equation*}
			
			\item The stepsize $\eta$ satisfies
			\begin{equation*}
				0 < \eta < \min \hkh{
					\bar{\eta}_1,
					\bar{\eta}_2,
					\bar{\eta}_3,
					\bar{\eta}_4,
					\bar{\eta}_5,
					\bar{\eta}_6,
					\bar{\eta}_7,
					\bar{\eta}_8,
					\bar{\eta}_9,
					\bar{\eta}_{10},
					\bar{\eta}_{11},
					\bar{\eta}_{12}
				},
			\end{equation*}
			where
			\begin{equation*}
				\begin{aligned}
					& \bar{\eta}_1 = \dfrac{216}{49 \beta^2}, \quad 
					\bar{\eta}_2 = \dfrac{12}{7 \beta}, \quad
					\bar{\eta}_3 = \dfrac{1}{18(4 + 3M_g)}, \quad
					\bar{\eta}_4 = \dfrac{5 \beta / 72 - 4 - 3 M_g}{(1 + M_g)^2},  \\
					& \bar{\eta}_5 
					= \dfrac{\sqrt{d} (1 - \lambda)}{(L_g + \beta L_b) \lambda C_D}, \quad
					\bar{\eta}_6 = \dfrac{\sqrt{d}}{(L_g + \beta L_b) C_D},  \quad
					\bar{\eta}_7 = \dfrac{1}{8 (L_g + \beta L_b)}, \\
					& \bar{\eta}_8 = \dfrac{1}{4 \rho d C_2}, \quad
					\bar{\eta}_9 = \dfrac{1 - \lambda^2}{2\lambda^2 (L_g + \beta L_b)}
					\sqrt{\dfrac{1}{6 (1 + \lambda^2)}}, \\
					& \bar{\eta}_{10} = \sqrt{ \dfrac{d(1 - \lambda^2)}{24 (L_g + \beta L_b)^3}}, \quad
					\bar{\eta}_{11} = \dfrac{d (1 - \lambda^2)}{12 (L_g + \beta L_b)^2}, \quad
					\bar{\eta}_{12} = \sqrt{\dfrac{3 \rho (1 - \lambda^2)}{16 C_1}}.
				\end{aligned}
			\end{equation*}
		
		\end{enumerate}

	\end{assumption}

	The conditions in Assumption \ref{asp:parameters} 
	are introduced for theoretical analysis. 
	The parameters $\beta$  and $\eta$ satisfying these conditions 
	are usually restrictive in practical use.

\subsection{Boundedness of iterates}

	The purpose of this subsection is to show that 
	the sequence $\{\dkh{\bfXk, \bfDk}\}$ generated by Algorithm \ref{alg:DESTINY} is bounded 
	and $\barXk$ is restricted in the bounded region $\cR$.
	We first prove the following technical lemma.

	\begin{lemma} \label{le:omega}
		Suppose the conditions in Assumptions \ref{asp:smooth}, \ref{asp:network}, 
		and \ref{asp:parameters} hold,
		and $\barXkn$ is generated by \eqref{eq:avg-iter}
		with $\barXk \in \cR$, $\norm{\bfXk}\ff \leq \sqrt{7dp / 6} + \sqrt{d}$,
		and $ \norm{\avXk - \bfXk}\ff \leq \sqrt{d} / (L_g + \beta L_b)$.
		Then we have $\barXkn \in \cR$.
	\end{lemma}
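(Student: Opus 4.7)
The plan is to expand $\barXkn\zz \barXkn - I_p$ directly and bound its Frobenius norm by $1/6$. Since $\barDk = \barHk$ by \eqref{eq:avg-iter}, I have $\barXkn = \barXk - \eta \barHk$, so writing $E := \barXk\zz \barXk - I_p$ gives
\begin{equation*}
\barXkn\zz \barXkn - I_p = E - 2\eta\, \sym\dkh{\barXk\zz \barHk} + \eta^2 \barHk\zz \barHk.
\end{equation*}
I would then decompose $\barHk = H(\barXk) + \bigl(\barHk - H(\barXk)\bigr)$. Because $(\barXk\zz \barXk)\dkh{\barXk\zz \barXk - I_p} = (I_p + E)E = E + E^2$ is already symmetric, the deterministic piece satisfies $\sym\dkh{\barXk\zz H(\barXk)} = \sym\dkh{\barXk\zz G(\barXk)} + \beta(E + E^2)$, so
\begin{equation*}
\barXkn\zz \barXkn - I_p = (1 - 2\eta\beta)E - 2\eta\beta E^2 - 2\eta\, \sym\dkh{\barXk\zz G(\barXk)} - 2\eta\, \sym\dkh{\barXk\zz \dkh{\barHk - H(\barXk)}} + \eta^2 \barHk\zz \barHk.
\end{equation*}

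Next, each term is controlled using the hypothesis and the constants in \eqref{eq:constants-1}. The bound $\norm{\bfXk}\ff \leq \sqrt{7dp/6} + \sqrt{d}$ places every $\Xik$ inside $\cB$, yielding $\norm{G(\barXk)}\ff \leq M_g$ by Jensen and $\norm{\barHk}\ff \leq M_g + \beta M_b$; from $\barXk \in \cR$ I obtain $\norm{\barXk}_2^2 \leq 7/6$; and the Lipschitz continuity of each $H_i$ with modulus $L_g + \beta L_b$ on $\cB$, combined with Cauchy--Schwarz, gives
\begin{equation*}
\norm{\barHk - H(\barXk)}\ff \leq \frac{L_g + \beta L_b}{d}\sumiid \norm{\Xik - \barXk}\ff \leq \frac{L_g + \beta L_b}{\sqrt{d}}\norm{\bfXk - \avXk}\ff \leq 1,
\end{equation*}
where the last step uses the consensus hypothesis. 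Taking Frobenius norms in the earlier identity, applying $\norm{E^2}\ff \leq \norm{E}\ff^2 \leq \norm{E}\ff / 6$ and noting that $1 - 2\eta\beta \geq 0$ (which follows from $\eta < \bar{\eta}_1$ together with the lower bound on $\beta$), I arrive at
\begin{equation*}
\norm{\barXkn\zz \barXkn - I_p}\ff \leq \dkh{1 - \frac{5 \eta\beta}{3}} \norm{E}\ff + 2\eta \sqrt{7/6}\, (M_g + 1) + \eta^2 (M_g + \beta M_b)^2.
\end{equation*}
Finally, plugging in $\norm{E}\ff \leq 1/6$ and invoking the conditions $\beta > 72(4 + 3M_g)/5$ together with the upper bounds $\eta < \min\{\bar{\eta}_1, \bar{\eta}_3, \bar{\eta}_4\}$ from Assumption \ref{asp:parameters}.2, the right-hand side is at most $1/6$, which establishes $\barXkn \in \cR$.

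The main obstacle is that the cross term $\sym(\barXk\zz G(\barXk))$ and the consensus gap together contribute an $O(\eta)$ perturbation which does not vanish as $\norm{E}\ff \to 0$, and the quadratic term $\eta^2 \barHk\zz \barHk$ contributes an $O(\eta^2 \beta^2 M_b^2)$ perturbation. Neither shrinks with $E$, so one cannot rely on pure contraction of $E$ to absorb them; both must instead be dominated by the deterministic decrease $5\eta\beta\norm{E}\ff/3$ evaluated at the worst-case level $\norm{E}\ff = 1/6$. This is precisely why the lower bound on $\beta$ must scale with $M_g$ (matching the numerator of $\bar{\eta}_4$) and the upper bound on $\eta$ must scale as $\beta^{-2}$ (matching $\bar{\eta}_1$), producing the rather intricate thresholds stated in Assumption \ref{asp:parameters}.2.
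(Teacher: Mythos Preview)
Your overall plan---expand $\barXkn\zz\barXkn - I_p$, extract a contraction from the $\beta\barXk E$ piece, and bound the remainder using the consensus hypothesis together with the Lipschitz constants on $\cB$---is precisely the paper's strategy, and your bound $\norm{\barHk - H(\barXk)}\ff \leq 1$ is the same starting point. But the closing step has a genuine gap.

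The problem is the quadratic term $\eta^2\barHk\zz\barHk$. You estimate $\norm{\barHk}\ff \leq M_g + \beta M_b$, where $M_b = \sup_{X\in\cB}\norm{\nabla b(X)}\ff$ is taken over the \emph{large} set $\cB$ (so $M_b$ can be of order $(dp)^{3/2}$). The error $\eta^2(M_g+\beta M_b)^2$ therefore carries a factor $\beta^2 M_b^2$, and none of the thresholds $\bar{\eta}_1,\bar{\eta}_3,\bar{\eta}_4$ in Assumption~\ref{asp:parameters} depend on $M_b$; nor does the lower bound on $\beta$. Hence your claimed inequality ``RHS $\leq 1/6$'' does not follow from the stated hypotheses: with $\eta=\bar{\eta}_1$ one gets $\eta(M_g+\beta M_b)^2 \approx (216/49)M_b^2$, which need not be dominated by $5\beta/18$.

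The paper avoids this by writing $\barXkn = \barXk - \eta\beta\barXk E + \eta Y_k$ with $Y_k := H(\barXk) - \barHk - G(\barXk)$, so that $\norm{Y_k}\ff \leq 1+M_g$ is $\beta$-\emph{independent}. All $\beta^2$-dependent pieces in the expansion then appear only through quantities like $\norm{\barXk\zz\barXk E}\ff$, which are bounded by absolute constants because $\barXk\in\cR$ (giving $\norm{\barXk}_2^2\leq 7/6$ and $\norm{E}\ff\leq 1/6$). This is exactly what $\bar{\eta}_1 = 216/(49\beta^2)$ and $\bar{\eta}_2 = 12/(7\beta)$ are calibrated for. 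Your argument can be repaired the same way: instead of the crude bound $\norm{\barHk}\ff\leq M_g+\beta M_b$, keep the $\beta\barXk E$ contribution separate when squaring.

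A secondary difference: the paper does not try to show RHS $\leq 1/6$ in one shot. It splits into two regimes, $\norm{E}\ff\leq 1/12$ (where the error may grow by up to $1/12$; this is where $\bar{\eta}_3$ enters) and $\norm{E}\ff>1/12$ (where the contraction $(1-5\eta\beta/6)^2$ evaluated at $1/12$ dominates; this is where $\beta>72(4+3M_g)/5$ and $\bar{\eta}_4$ enter). Your single-shot argument at the worst case $\norm{E}\ff=1/6$ is sound in principle, but the specific constants in Assumption~\ref{asp:parameters} were tuned to the two-case split and to the paper's contraction factor $(1-5\eta\beta/6)^2$, not to yours.
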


	\begin{proof}
		Since $\norm{\bfXk}\ff \leq \sqrt{7dp / 6} + \sqrt{d}$,
		we have $\Xik \in \cB$.
		Then it can be readily verified that
		\begin{equation*}
			\begin{aligned}
				\norm{ H (\barXk) - \barHk}\ff
				& \leq \dfrac{1}{d} \sumiid \norm{H_i (\barXk) - H_i (\Xik)}\ff
				\leq \dfrac{L_g + \beta L_b}{d} \sumiid \norm{\barXk - \Xik}\ff \\
				& \leq \dfrac{L_g + \beta L_b}{\sqrt{d}} \norm{\avXk - \bfXk}\ff
				\leq 1.
			\end{aligned}
		\end{equation*}
		Moreover, it follows from \eqref{eq:avg-iter} that 
		\begin{equation*}
			\begin{aligned}
				\barXkn = \barXk - \eta H (\barXk) + \eta \dkh{H (\barXk) - \barHk}
				= \barXk - \eta \beta \barXk \dkh{\barXk\zz \barXk - I_p} + \eta Y_k,
			\end{aligned}
		\end{equation*}
		where $Y_k := H (\barXk) - \barHk - G (\barXk)$ and $Y_k$ satisfies
		\begin{equation*}
			\norm{Y_k}\ff \leq \norm{H (\barXk) - \barHk}\ff 
			+ \norm{G (\barXk)}\ff
			\leq 1 + M_g.
		\end{equation*}
		Then by straightforward calculations, we can obtain that
		\begin{equation*}
			\begin{aligned}
				\barXkn\zz \barXkn - I_p
				= {} & \dkh{\barXk - \eta \beta \barXk \dkh{\barXk\zz \barXk - I_p} + \eta Y_k}\zz
				\dkh{\barXk - \eta \beta \barXk \dkh{\barXk\zz \barXk - I_p} + \eta Y_k} - I_p \\
				= {} & \barXk\zz \barXk - I_p - 2 \eta \beta \barXk\zz \barXk \dkh{\barXk\zz \barXk - I_p} 
				+ \eta \barXk\zz Y_k + \eta^2 \beta^2 \barXk\zz \barXk \dkh{\barXk\zz \barXk - I_p}^2 \\
				& - \eta^2 \beta \dkh{\barXk\zz \barXk - I_p} \barXk\zz Y_k + \eta Y_k\zz \barXk
				- \eta^2 \beta Y_k\zz \barXk \dkh{\barXk\zz \barXk - I_p} + \eta^2 Y_k\zz Y_k \\
				= {} & \dkh{I_n - \eta \beta \barXk\zz \barXk}^2 \dkh{\barXk\zz \barXk - I_p}
				- \eta^2 \beta^2 \barXk\zz \barXk \dkh{\barXk\zz \barXk - I_p} 
				+ \eta \barXk\zz Y_k + \eta Y_k\zz \barXk \\
				&  - \eta^2 \beta \dkh{\barXk\zz \barXk - I_p} \barXk\zz Y_k
				- \eta^2 \beta Y_k\zz \barXk \dkh{\barXk\zz \barXk - I_p} + \eta^2 Y_k\zz Y_k.
			\end{aligned}
		\end{equation*}
		This further implies that
		\begin{equation*}
			\begin{aligned}
				\norm{\barXkn\zz \barXkn - I_p}\ff 
				\leq {} & \norm{\dkh{I_n - \eta \beta \barXk\zz \barXk}^2 \dkh{\barXk\zz \barXk - I_p}}\ff 
				+ \eta^2 \beta^2 \norm{ \barXk\zz \barXk \dkh{\barXk\zz \barXk - I_p} }\ff \\
				& + 2 \eta \norm{\barXk\zz Y_k}\ff 
				+ 2 \eta^2 \beta \norm{\dkh{\barXk\zz \barXk - I_p} \barXk\zz Y_k}\ff 
				+ \eta^2 \norm{Y_k\zz Y_k}\ff \\
				\leq {} & \dkh{1 - \dfrac{5}{6} \eta \beta}^2 \norm{\barXk\zz \barXk - I_p}\ff 
				+ \dfrac{49}{216} \eta^2 \beta^2 + \dfrac{7}{3} (1 + M_g) \eta \\
				& + \dfrac{7}{18} (1 + M_g) \eta^2 \beta + (1 + M_g)^2 \eta^2 \\
				\leq {} & \dkh{1 - \dfrac{5}{6} \eta \beta}^2 \norm{\barXk\zz \barXk - I_p}\ff
				+ \eta \dkh{4 + 3 M_g} + \eta^2 \dkh{1 + M_g}^2, 
			\end{aligned}
		\end{equation*}
		where the last equality follows from 
		$\eta \leq \min\{\bar{\eta}_1, \bar{\eta}_2\}$.
		Now we consider the above relationship in the following two cases.
		
		\paragraph{Case I: $\norm{\barXk\zz \barXk - I_p}\ff \leq 1/ 12$.} 
		Since $\eta \leq \bar{\eta}_3$, we have
		\begin{equation*}
			\norm{\barXkn\zz \barXkn - I_p}\ff 
			\leq \norm{\barXk\zz \barXk - I_p}\ff + \dfrac{1}{12} = \dfrac{1}{6}.
		\end{equation*}
		
		\paragraph{Case II: $\norm{\barXk\zz \barXk - I_p}\ff > 1/ 12$.}
		It can be readily verified that
		\begin{equation*}
			\begin{aligned}
				\norm{\barXkn\zz \barXkn - I_p}\ff - \norm{\barXk\zz \barXk - I_p}\ff 
				\leq {} & \dkh{\dkh{1 - \dfrac{5}{6} \eta \beta}^2 - 1}\norm{\barXk\zz \barXk - I_p}\ff \\
				& + \eta \dkh{4 + 3 M_g} + \eta^2 \dkh{1 + M_g}^2 \\
				\leq {} & - \dfrac{5}{72} \eta \beta  + \eta \dkh{4 + 3 M_g} 
				+ \eta^2 \dkh{1 + M_g}^2,
			\end{aligned}
		\end{equation*}
		which together with $\beta > 72 (4 + 3 M_g) / 5$
		and $\eta \leq \bar{\eta}_4$ yields that
		\begin{equation*}
			\norm{\barXkn\zz \barXkn - I_p}\ff - \norm{\barXk\zz \barXk - I_p}\ff \leq 0.
		\end{equation*}
		Hence, we arrive at 
		$\norm{\barXkn\zz \barXkn - I_p}\ff \leq \norm{\barXk\zz \barXk - I_p}\ff \leq 1 / 6$.
		Combing the above two cases, we complete the proof.
	\end{proof}

	Based on Lemma \ref{le:omega},
	we can prove the main results of this subsection.

	\begin{proposition}\label{prop:bound}
		Suppose the conditions in Assumptions \ref{asp:smooth}, \ref{asp:network}, 
		and \ref{asp:parameters} hold.
		Let $\{\dkh{\bfXk, \bfDk}\}$ be the iterate sequence 
		generated by Algorithm \ref{alg:DESTINY} with $X_{\mathrm{initial}} \in \cR$.
		Then for any $k \in \bN$, it holds that 
		\begin{equation}\label{eq:bound-x}
			\barXk \in \cR,~ 
			\norm{\avXk - \bfXk}\ff \leq \dfrac{\sqrt{d}}{L_g + \beta L_b},~
			\norm{\bfXk}\ff \leq \sqrt{\dfrac{7dp}{6}} + \sqrt{d}, 
			\mbox{~and~} \norm{\bfDk}\ff \leq C_D.
		\end{equation}
	\end{proposition}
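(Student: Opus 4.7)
The plan is to establish all four bounds simultaneously by induction on $k$, strengthening the inductive hypothesis to also track the consensus error of the descent direction, $\|\mathbf{D}_k - \bar{\mathbf{D}}_k\|_F \leq \|\mathbf{D}_0 - \bar{\mathbf{D}}_0\|_F + 3\sqrt{d}/(1-\lambda)$, since this quantity is needed to close the recursion but does not appear explicitly in \eqref{eq:bound-x}.

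For the base case $k=0$, all local copies coincide at $X_{\mathrm{initial}} \in \cR$, so the consensus error $\|\avXk - \bfXk\|\ff$ vanishes. The condition $\|X_0\zz X_0 - I_p\|\ff \leq 1/6$ forces each eigenvalue of $X_0\zz X_0$ to lie in $[5/6, 7/6]$, hence $\norm{X_0}\fs = \tr(X_0\zz X_0) \leq 7p/6$ and $\norm{\bfXk}\ff \leq \sqrt{7dp/6}$. Since $X_{\mathrm{initial}} \in \cR \subset \cB$, every $H_i(X_{\mathrm{initial}})$ is bounded by $M_g + \beta M_b$, so $\|\bar{\mathbf{D}}_0\|\ff \leq \sqrt{d}(M_g + \beta M_b)$, and adding $\|\mathbf{D}_0 - \bar{\mathbf{D}}_0\|\ff$ yields $\|\mathbf{D}_0\|\ff \leq C_D$ by the very definition of $C_D$.

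For the inductive step, I handle the four bounds in order. The membership $\barXkn \in \cR$ follows immediately from Lemma~\ref{le:omega} since the inductive hypothesis supplies its three prerequisites. For the consensus error, I use $\bfJ \bfW = \bfJ$ to write
\begin{equation*}
\bfXkn - \avXkn = (\bfW - \bfJ)\bigl((\bfXk - \avXk) - \eta(\bfDk - \avDk)\bigr),
\end{equation*}
apply $\|\bfW - \bfJ\|_2 = \lambda$, bound $\|\bfDk - \avDk\|\ff$ via the strengthened IH (together with $\|\bfDk\|\ff \leq C_D$), and invoke $\eta \leq \bar{\eta}_5$ so the geometric factor closes. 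The bound $\|\bfXkn\|\ff \leq \sqrt{7dp/6} + \sqrt{d}$ then follows by splitting $\bfXkn = \avXkn + (\bfXkn - \avXkn)$, using $\barXkn \in \cR$ for the mean part and the just-proved consensus bound (together with $\beta L_b > 1$, guaranteed by Assumption~\ref{asp:parameters}) for the deviation.

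The main obstacle is the bound $\|\bfDkn\|\ff \leq C_D$, which is self-referential because $C_D$ is a fixed constant incorporating a term $3\sqrt{d}/(1-\lambda)$ that must arise from summing a geometric series of per-iteration errors. Splitting $\bfDkn = \avDkn + (\bfDkn - \avDkn)$, the tracking identity $\avDk = \avHk$ gives $\|\avDkn\|\ff \leq \sqrt{d}(M_g + \beta M_b)$. The consensus component satisfies
\begin{equation*}
\bfDkn - \avDkn = (\bfW - \bfJ)(\bfDk - \avDk) + (\bfI - \bfJ)(\bfHkn - \bfHk).
\end{equation*}
I would bound $\|\bfHkn - \bfHk\|\ff \leq (L_g + \beta L_b)\|\bfXkn - \bfXk\|\ff$ using that both iterates lie in $\cB$, and then $\|\bfXkn - \bfXk\|\ff \leq 2\|\bfXk - \avXk\|\ff + \eta\|\bfDk\|\ff$, which the IH and $\eta \leq \bar{\eta}_6$ collapse to at most $3\sqrt{d}$. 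Unrolling the $\lambda$-contractive recursion in $k$ then yields $\|\bfDkn - \avDkn\|\ff \leq \|\bfD_0 - \bar{\bfD}_0\|\ff + 3\sqrt{d}/(1-\lambda)$, which preserves the strengthened IH and, combined with the mean bound, delivers exactly $\|\bfDkn\|\ff \leq C_D$. The delicate point is choosing the per-step slack in $\|\bfXkn - \bfXk\|\ff$ so that the geometric tail matches the constant $3\sqrt{d}/(1-\lambda)$ baked into $C_D$; this is precisely what $\bar{\eta}_5$ and $\bar{\eta}_6$ are engineered to guarantee.
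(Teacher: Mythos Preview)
Your proposal is correct and follows essentially the same approach as the paper's proof: an induction on $k$, with the same order of establishing the bounds, the same use of Lemma~\ref{le:omega}, the same estimates $\|\bfXkn-\bfXk\|\ff\leq 2\|\avXk-\bfXk\|\ff+\eta\|\bfDk\|\ff$ and $\|\bfHkn-\bfHk\|\ff\leq(L_g+\beta L_b)\|\bfXkn-\bfXk\|\ff$, and the same roles for $\bar{\eta}_5,\bar{\eta}_6$. Your explicit strengthening of the inductive hypothesis to include $\|\bfDk-\avDk\|\ff\leq\|\bfD_0-\avD_0\|\ff+3\sqrt{d}/(1-\lambda)$ is a cleaner way to justify what the paper writes as ``which is followed by'' when passing from the one-step recursion to the unrolled bound; with the strengthened IH you only need to apply the recursion once, so the word ``unrolling'' in your last paragraph is unnecessary.
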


	\begin{proof}
		We use mathematical induction to prove this proposition.
		The argument \eqref{eq:bound-x} directly holds 
		at $\{\dkh{\mathbf{X}_0, \mathbf{D}_0}\}$ resulting from the initialization.
		Now, we assume that this argument holds at $\{\dkh{\bfXk, \bfDk}\}$,
		and investigate the situation at $\{\dkh{\bfXkn, \bfDkn}\}$.
		
		Our first purpose is to show that $\norm{\avXkn - \bfXkn}\ff \leq \sqrt{d} / (L_g + \beta L_b)$.
		By straightforward calculations, we can attain that
		\begin{equation*}
			\begin{aligned}
				\norm{\avXkn - \bfXkn}\ff
				& = \norm{\dkh{\bfW - \bfJ} \dkh{\avXk - \bfXk} 
					+ \eta \dkh{\bfW - \bfJ} \bfDk}\ff \\
				& \leq \norm{\dkh{\bfW - \bfJ} \dkh{\avXk - \bfXk}}\ff
				+ \eta \norm{\dkh{\bfW - \bfJ} \bfDk}\ff \\
				& \leq \lambda \norm{\avXk - \bfXk}\ff
				+ \eta \lambda \norm{\bfDk}\ff \\
				& \leq \lambda \norm{\avXk - \bfXk}\ff
				+ \eta \lambda C_D,
			\end{aligned}
		\end{equation*}
		which together with $\eta \leq \bar{\eta}_5$ implies that
		\begin{equation*}
			\norm{\avXkn - \bfXkn}\ff 
			\leq \dfrac{\sqrt{d}  \lambda}{L_g + \beta L_b}
			+ \dfrac{\sqrt{d} \dkh{1 - \lambda}}{L_g + \beta L_b}
			\leq \dfrac{\sqrt{d}}{L_g + \beta L_b}.
		\end{equation*}
	
		Then we aim to prove that $\barXkn \in \cR$ 
		and $\norm{\bfXkn}\ff \leq \sqrt{7dp/6} + \sqrt{d}$.
		According to Lemma \ref{le:omega}, we have $\barXkn \in \cR$.
		And it follows that
		\begin{equation*}
			\norm{\bfXkn}\ff 
			\leq \norm{\avXkn}\ff + \norm{\avXkn - \bfXkn}\ff
			\leq \sqrt{\dfrac{7dp}{6}} + \dfrac{\sqrt{d}}{L_g + \beta L_b}
			\leq \sqrt{\dfrac{7dp}{6}} + \sqrt{d},
		\end{equation*}
		as a result of the condition $L_g + \beta L_b \geq \beta L_b \geq 1$.
		
		In order to finish the proof, we still have to show that $\norm{\bfDkn}\ff \leq C_D$.
		In fact, we have 
		\begin{equation*}
			\norm{H_i (\Xikn) - H_i (\Xik)}\ff 
			\leq (L_g + \beta L_b) \norm{\Xikn - \Xik}\ff, \quad \iid,
		\end{equation*}
		which implies that
		\begin{equation*}
			\norm{\bfHkn - \bfHk}\ff \leq (L_g + \beta L_b) \norm{\bfXkn - \bfXk}\ff.
		\end{equation*}
		Moreover, it can be readily verified that
		\begin{equation*}
			\begin{aligned}
				\norm{\bfXkn - \bfXk}\ff 
				& = \norm{\bfW \dkh{\bfXk - \eta \bfDk} - \bfXk}\ff
				= \norm{\dkh{I_{dn} - \bfW} \dkh{\avXk - \bfXk}- \eta \bfW \bfDk}\ff \\
				& \leq 2 \norm{\avXk - \bfXk}\ff + \eta \norm{\bfDk}\ff
				\leq \dfrac{2 \sqrt{d}}{L_g + \beta L_b} + \eta C_D
				\leq \dfrac{3 \sqrt{d}}{L_g + \beta L_b},
 			\end{aligned}
		\end{equation*}
		where the last inequality follows from $\eta \leq \bar{\eta}_6$.
		Combing the above two relationships, we can obtain that
		\begin{equation*}
			\begin{aligned}
				\norm{\avDkn - \bfDkn}\ff
				& = \norm{ \dkh{\bfW - \bfJ} \dkh{\avDk - \bfDk} 
					- \dkh{I_{dn} - \bfJ} \dkh{\bfHkn - \bfHk} }\ff \\
				& \leq \norm{\dkh{\bfW - \bfJ} \dkh{\avDk - \bfDk}}\ff
				+ \norm{\dkh{I_{dn} - \bfJ} \dkh{\bfHkn - \bfHk}}\ff \\
				& \leq \lambda \norm{\avDk - \bfDk}\ff
				+ \norm{\bfHkn - \bfHk}\ff \\
				& \leq \lambda \norm{\avDk - \bfDk}\ff + 3 \sqrt{d},
			\end{aligned}
		\end{equation*}
		which is followed by
		\begin{equation*}
			\norm{\avDkn - \bfDkn}\ff 
			\leq \norm{\bar{\mathbf{D}}_{0} - \mathbf{D}_{0}}\ff 
			+ \dfrac{3 \sqrt{d}}{1 - \lambda}.
		\end{equation*}
		Therefore, we can obtain that
		\begin{equation*}
			\begin{aligned}
				\norm{\bfDkn}\ff 
				& \leq \norm{\avDkn - \bfDkn}\ff + \norm{\avDkn}\ff
				= \norm{\avDkn - \bfDkn}\ff + \norm{\avHkn}\ff \\
				& \leq \norm{\bar{\mathbf{D}}_{0} - \mathbf{D}_{0}}\ff 
				+ \dfrac{3 \sqrt{d}}{1 - \lambda}
				+ \sqrt{d} \dkh{M_g + \beta M_b} 
				= C_D. 
			\end{aligned}
		\end{equation*}
		The proof is completed.
	\end{proof}

\subsection{Sufficient descent of the merit function}
	
	In this subsection, we aim to prove the sufficient descent property of
	the merit function (to be defined later). 
	Towards this end, we first build the upper bound of 
	consensus error $\|\avXk - \bfXk\|\fs$ 
	and gradient tracking error $\|\avDk - \bfDk\|\fs$.

	\begin{lemma}\label{le:consensus-error}
		Suppose $\{\dkh{\bfXk, \bfDk}\}$ is the iterate sequence generated by Algorithm \ref{alg:DESTINY}.
		Then for any $k \in \bN$, it holds that
		\begin{equation*}
			\norm{\avXkn - \bfXkn}\fs
			\leq \dfrac{1 + \lambda^2}{2} \norm{\avXk - \bfXk}\fs
			+ \eta^2 C_1\norm{\avDk - \bfDk}\fs,
		\end{equation*}
		where $C_1 > 0$ is a constant defined in \eqref{eq:constants-1}.
	\end{lemma}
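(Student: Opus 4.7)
The plan is to start from the compact update rule $\bfXkn = \bfW(\bfXk - \eta \bfDk)$ and exploit the two identities $\bfJ \bfW = \bfJ$ (since $W$ is doubly stochastic and hence $JW = J$) and $(\bfW - \bfJ)\avXk = (\bfW - \bfJ)\avDk = 0$ noted right before the algorithm description. The first identity yields $\avXkn = \bfJ \bfXkn = \bfJ(\bfXk - \eta \bfDk) = \avXk - \eta \avDk$. Combining this with the update gives the clean expression
\begin{equation*}
\avXkn - \bfXkn = (\bfJ - \bfW)(\bfXk - \eta \bfDk) = (\bfW - \bfJ)\bigl((\avXk - \bfXk) - \eta(\avDk - \bfDk)\bigr),
\end{equation*}
where in the last step I have subtracted the zero quantities $(\bfW - \bfJ)\avXk$ and $\eta(\bfW - \bfJ)\avDk$ so that the argument lies in the orthogonal complement of the consensus subspace.

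Next I would take Frobenius norms and use $\norm{\bfW - \bfJ}_2 = \lambda$ (which follows from the definition of $\lambda$ in \eqref{eq:lambda} together with the Kronecker identity $\bfW - \bfJ = (W - \bfone_d \bfone_d\zz / d) \otimes I_n$). This bounds the right-hand side by $\lambda \norm{(\avXk - \bfXk) - \eta(\avDk - \bfDk)}\ff$.

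Finally, to separate the two terms I would apply Young's inequality in the form $(a+b)^2 \le (1+\gamma^{-1}) a^2 + (1+\gamma) b^2$ after squaring, with the specific choice $\gamma = (1 - \lambda^2)/(2\lambda^2)$ (the constant already named in \eqref{eq:constants-1}). This choice is tuned exactly so that $\lambda^2(1 + \gamma^{-1})$ collapses to $(1+\lambda^2)/2$, producing the advertised contraction factor on $\|\avXk - \bfXk\|\fs$, while the coefficient of $\eta^2 \|\avDk - \bfDk\|\fs$ becomes $\lambda^2(1 + \gamma) = \lambda^2(1 + \lambda^2)/(1 - \lambda^2) = C_1$.

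The argument is essentially algebraic and I do not foresee a real obstacle; the only mildly delicate point is calibrating the Young's inequality parameter $\gamma$ so that the two target constants $(1+\lambda^2)/2$ and $C_1$ emerge simultaneously, but this is forced by the desired coefficients and already anticipated by the paper's definition of $\gamma$.
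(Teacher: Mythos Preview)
Your approach is essentially identical to the paper's proof, but there is one algebraic slip in the final step. With $\gamma = (1-\lambda^2)/(2\lambda^2)$ one computes $\lambda^2(1+\gamma) = (1+\lambda^2)/2$ and $\lambda^2(1+\gamma^{-1}) = \lambda^2(1+\lambda^2)/(1-\lambda^2) = C_1$, which is the reverse of what you stated. Hence Young's inequality must be applied in the form $(a+b)^2 \le (1+\gamma)\,a^2 + (1+\gamma^{-1})\,b^2$ with $a$ corresponding to $\avXk-\bfXk$ and $b$ to $\eta(\avDk-\bfDk)$; this is precisely the version the paper uses. With that swap your argument matches the paper's proof line for line.
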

	
	\begin{proof}
		By straightforward calculations, we can attain that
		\begin{equation*}
			\begin{aligned}
				\norm{\avXkn - \bfXkn}\fs
				= & \norm{ \dkh{\bfW - \bfJ} \dkh{\avXk - \bfXk} 
					- \eta \dkh{\bfW - \bfJ} \dkh{\avDk - \bfDk} }\fs \\
				\leq & \dkh{1 + \gamma} \norm{\dkh{\bfW - \bfJ} \dkh{\avXk - \bfXk}}\fs 
				+ \eta^2 \dkh{1 + 1 / \gamma} \norm{\dkh{\bfW - \bfJ} \dkh{\avDk - \bfDk}}\fs \\
				\leq & \lambda^2 \dkh{1 + \gamma} \norm{\avXk - \bfXk}\fs
				+ \eta^2 \lambda^2 \dkh{1 + 1 / \gamma} \norm{\avDk - \bfDk}\fs,
			\end{aligned}
		\end{equation*}
		where $\gamma > 0$ is a constant defined in \eqref{eq:constants-1}.
		This completes the proof 
		since $\lambda^2 \dkh{1 + \gamma} =  (1 + \lambda^2) / 2$
		and $\lambda^2 \dkh{1 + 1 / \gamma} = C_1$.
	\end{proof}

	\begin{lemma}\label{le:dk}
		Suppose the conditions in Assumptions \ref{asp:smooth}, \ref{asp:network}, 
		and \ref{asp:parameters} hold.
		Let $\{\dkh{\bfXk, \bfDk}\}$ be the iterate sequence 
		generated by Algorithm \ref{alg:DESTINY} with $X_{\mathrm{initial}} \in \cR$.
		Then for any $k \in \bN$, it holds that
		\begin{equation*}
			\norm{\avDk}\fs 
			\leq 2 \dkh{L_g + \beta L_b}^2 \norm{\avXk - \bfXk}\fs
			+ 2d \norm{H (\barXk)}\fs.
		\end{equation*}
	\end{lemma}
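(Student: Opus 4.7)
The plan is to exploit the averaging identity $\barDk = \barHk$ from \eqref{eq:avg-iter} to reduce the claim to a statement about $\barHk$, and then compare $\barHk$ to $H(\barXk)$ via Lipschitz continuity of the local mappings $H_i$ on the bounded set $\cB$.

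First I would observe that, since $\avDk = \dkh{\bfone_d \otimes I_n}\barDk$, the orthonormality of $\bfone_d/\sqrt{d}$ gives $\norm{\avDk}\fs = d\,\norm{\barDk}\fs$, and by \eqref{eq:avg-iter} this equals $d\,\norm{\barHk}\fs$. Next I would write $\barHk = H(\barXk) + \dkh{\barHk - H(\barXk)}$ and apply the elementary bound $\norm{a+b}\fs \leq 2\norm{a}\fs + 2\norm{b}\fs$ to obtain
\begin{equation*}
\norm{\avDk}\fs \leq 2d\,\norm{H(\barXk)}\fs + 2d\,\norm{\barHk - H(\barXk)}\fs.
\end{equation*}
It then remains to control $\norm{\barHk - H(\barXk)}\fs$ by the consensus error.

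For this, I would use Proposition~\ref{prop:bound}, which guarantees that all local iterates $\Xik$ as well as the average $\barXk$ lie in $\cB$. On this bounded set, each $H_i = G_i + \beta\,\nabla b$ has Lipschitz constant at most $L_g + \beta L_b$ by the definitions in \eqref{eq:constants-1}. Writing
\begin{equation*}
\barHk - H(\barXk) = \dfrac{1}{d}\sumiid \dkh{H_i(\Xik) - H_i(\barXk)},
\end{equation*}
Jensen's inequality applied to $\norm{\cdot}\fs$ together with the Lipschitz bound yields
\begin{equation*}
\norm{\barHk - H(\barXk)}\fs \leq \dfrac{1}{d} \sumiid \norm{H_i(\Xik) - H_i(\barXk)}\fs \leq \dfrac{(L_g + \beta L_b)^2}{d} \sumiid \norm{\Xik - \barXk}\fs.
\end{equation*}
Finally I would identify $\sumiid \norm{\Xik - \barXk}\fs = \norm{\bfXk - \avXk}\fs$ and substitute back, which produces exactly the desired inequality.

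The argument is essentially bookkeeping; the only non-obvious ingredient is ensuring that both $\Xik$ and $\barXk$ actually lie in $\cB$ so that the Lipschitz constant $L_g + \beta L_b$ applies uniformly, but this has already been arranged by Proposition~\ref{prop:bound}. I do not anticipate any real obstacle beyond carefully applying Jensen's inequality to the squared Frobenius norm.
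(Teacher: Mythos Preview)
Your proposal is correct and follows essentially the same route as the paper: both use $\barDk = \barHk$, split $\barHk$ around $H(\barXk)$ via $\norm{a+b}\fs \leq 2\norm{a}\fs + 2\norm{b}\fs$, and control $\norm{\barHk - H(\barXk)}$ by the Lipschitz constant $L_g + \beta L_b$ on $\cB$ together with the consensus error. The only cosmetic difference is that the paper bounds $\norm{\barHk - H(\barXk)}\ff$ first via the triangle inequality and Cauchy--Schwarz (obtaining \eqref{eq:diff-hd}) and then squares, whereas you apply Jensen's inequality directly to the squared norm; both yield the identical bound $\frac{(L_g+\beta L_b)^2}{d}\norm{\avXk-\bfXk}\fs$.
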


	\begin{proof}
		At first, it is straightforward to verify that
		\begin{equation}\label{eq:diff-hd}
			\begin{aligned}
				\norm{H (\barXk) - \barDk}\ff
				& = \norm{H (\barXk) - \barHk}\ff
				\leq \dfrac{1}{d} \sumiid \norm{H_i (\barXk) - H_i (\Xik)}\ff \\
				& \leq \dfrac{L_g + \beta L_b}{d} \sumiid \norm{\barXk - \Xik}\ff
				\leq \dfrac{L_g + \beta L_b}{\sqrt{d}} \norm{\avXk - \bfXk}\ff,
			\end{aligned}
		\end{equation}
		which further implies that
		\begin{equation}\label{eq:dk}
			\begin{aligned}
				\norm{\barDk}\fs 
				& \leq 2 \norm{H (\barXk) - \barDk}\fs + 2 \norm{H (\barXk)}\fs \\
				& \leq \dfrac{2}{d} \dkh{L_g + \beta L_b}^2 \norm{\avXk - \bfXk}\fs
				+ 2 \norm{H (\barXk)}\fs.
			\end{aligned}
		\end{equation}
		We complete the proof since $\|\avDk\|\fs = d \|\barDk\|\fs$.
	\end{proof}
	
	\begin{lemma}\label{le:tracking-error}
		Suppose the conditions in Assumptions \ref{asp:smooth}, \ref{asp:network}, 
		and \ref{asp:parameters} hold.
		Let $\{\dkh{\bfXk, \bfDk}\}$ be the iterate sequence 
		generated by Algorithm \ref{alg:DESTINY} with $X_{\mathrm{initial}} \in \cR$.
		Then for any $k \in \bN$, it holds that
		\begin{equation*}
			\norm{\avDkn - \bfDkn}\fs
			\leq {} \dfrac{5 + 3 \lambda^2}{8} \norm{\avDk - \bfDk}\fs
			+ C_2 \norm{\avXk - \bfXk}\fs
			+ \dfrac{\eta}{8 \rho} \norm{H (\barXk)}\fs,
		\end{equation*}
		where $C_2 > 0$ and $\rho > 0$ are two constants 
		defined in \eqref{eq:constants-2}.
	\end{lemma}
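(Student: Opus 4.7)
\textbf{Proof proposal for Lemma~\ref{le:tracking-error}.}
The plan is to mimic the strategy of Lemma~\ref{le:consensus-error} but applied to the tracking error, and then absorb the extra terms that arise from the non-stationarity of $\bfHk$. I first rewrite $\avDkn - \bfDkn$ in a convenient form. Using the update $\bfDkn = \bfW\bfDk + \bfHkn - \bfHk$ together with the identities $\bfJ\bfW = \bfJ$ and $\bfJ\avDk = \avDk$, a direct computation gives
\begin{equation*}
\avDkn - \bfDkn = -(\bfW - \bfJ)(\bfDk - \avDk) - (I_{dn} - \bfJ)(\bfHkn - \bfHk).
\end{equation*}
Applying Young's inequality with the specific choice $\gamma = (1-\lambda^2)/(2\lambda^2)$ (so that $\lambda^2(1+\gamma) = (1+\lambda^2)/2$ and $1+1/\gamma = (1+\lambda^2)/(1-\lambda^2)$), together with $\|(\bfW-\bfJ)Z\|\ff \leq \lambda\|Z\|\ff$ and $\|I_{dn} - \bfJ\|_2 \leq 1$, yields the intermediate bound
\begin{equation*}
\norm{\avDkn - \bfDkn}\fs \;\leq\; \frac{1+\lambda^2}{2}\,\norm{\avDk - \bfDk}\fs \;+\; \frac{1+\lambda^2}{1-\lambda^2}\,\norm{\bfHkn - \bfHk}\fs.
\end{equation*}

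Next I would convert the $\bfH$-increment into quantities controlled by the lemma's right-hand side. Proposition~\ref{prop:bound} confirms that all $X_{i,k}$ remain in $\cB$, hence the Lipschitz constant $L_g + \beta L_b$ applies uniformly and $\|\bfHkn - \bfHk\|\fs \leq (L_g + \beta L_b)^2 \|\bfXkn - \bfXk\|\fs$. The $X$-increment I would unpack as
\begin{equation*}
\bfXkn - \bfXk = (\bfW - I_{dn})(\bfXk - \avXk) - \eta\,\bfW\bfDk,
\end{equation*}
then estimate $\|(\bfW - I_{dn})(\bfXk-\avXk)\|\fs \leq (1+\lambda)^2\|\avXk - \bfXk\|\fs$ and split $\bfW\bfDk = (\bfW-\bfJ)(\bfDk-\avDk) + \avDk$ via another Young inequality to obtain $\|\bfW\bfDk\|\fs \leq 2\lambda^2\|\bfDk-\avDk\|\fs + 2\|\avDk\|\fs$. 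Feeding Lemma~\ref{le:dk} into the $\|\avDk\|\fs$ term converts it into a combination of $\|\avXk - \bfXk\|\fs$ and $\|H(\barXk)\|\fs$.

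Assembling these ingredients produces a bound of the form
\begin{equation*}
\norm{\avDkn-\bfDkn}\fs \leq \left[\tfrac{1+\lambda^2}{2} + c_1\eta^2 (L_g+\beta L_b)^2\right]\!\norm{\avDk-\bfDk}\fs + c_2(L_g+\beta L_b)^2\tfrac{1+\lambda^2}{1-\lambda^2}\norm{\avXk-\bfXk}\fs + c_3 \eta^2 d\,\tfrac{(1+\lambda^2)(L_g+\beta L_b)^2}{1-\lambda^2}\norm{H(\barXk)}\fs,
\end{equation*}
with explicit numerical constants $c_1,c_2,c_3$. The final step is to invoke the stepsize constraints from Assumption~\ref{asp:parameters}. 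Specifically, $\bar\eta_9$ (and $\bar\eta_7$) are tuned precisely so that $c_1\eta^2(L_g+\beta L_b)^2 \leq (1-\lambda^2)/8$, upgrading the leading coefficient from $(1+\lambda^2)/2 = (4+4\lambda^2)/8$ to $(5+3\lambda^2)/8$. The bounds $\bar\eta_7$ and $\bar\eta_{10},\bar\eta_{11}$ will bring $c_2(L_g+\beta L_b)^2(1+\lambda^2)/(1-\lambda^2)$ below $C_2$, and reduce the $\eta^2$ coefficient of $\|H(\barXk)\|\fs$ to $\eta/(8\rho)$ (using the definition $\rho = 3(1-\lambda^2)/(16C_2)$).

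The main obstacle is the bookkeeping in the last step: because bounding $\|\bfHkn-\bfHk\|\fs$ reintroduces $\|\bfDk-\avDk\|\fs$ through the $\bfW\bfDk$ term, one must carefully choose the two Young parameters (one in step one, one in the splitting of $\bfW\bfDk$) and then match each residual $\eta^2$-sized term against the appropriate $\bar\eta_j$ to absorb it into the targeted coefficients $(5+3\lambda^2)/8$, $C_2$, and $\eta/(8\rho)$. Once the constants are verified, the lemma follows by simple addition.
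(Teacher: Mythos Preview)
Your approach is essentially the same as the paper's: the identical decomposition of $\avDkn-\bfDkn$, Young's inequality with the same $\gamma$, the Lipschitz bound on $\bfHkn-\bfHk$, the expansion of $\bfXkn-\bfXk$, and the use of Lemma~\ref{le:dk} to absorb $\norm{\avDk}\fs$. The paper writes the $X$-increment directly as the three-term sum $(I_{dn}-\bfW)(\avXk-\bfXk)+\eta(\bfW-\bfJ)(\avDk-\bfDk)-\eta\bfW\avDk$ (yielding coefficients $6,\,3\eta^2\lambda^2,\,3\eta^2$) rather than your two-stage split, but this is cosmetic. The one bookkeeping correction: the stepsize bounds actually invoked here are $\bar\eta_7,\bar\eta_8,\bar\eta_9$; the constraints $\bar\eta_{10},\bar\eta_{11}$ belong to Lemma~\ref{le:des-h}, and it is $\bar\eta_8=1/(4\rho d C_2)$ that converts the $d\eta^2$-sized coefficient in front of $\norm{H(\barXk)}\fs$ into $\eta/(8\rho)$.
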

	
	\begin{proof}
		By straightforward calculations, we can attain that
		\begin{equation*}
			\begin{aligned}
				\norm{\avDkn - \bfDkn}\fs
				& = \norm{ \dkh{\bfW - \bfJ} \dkh{\avDk - \bfDk} 
					- \dkh{I_{dn} - \bfJ} \dkh{\bfHkn - \bfHk} }\fs \\
				& \leq \dkh{1 + \gamma} \norm{\dkh{\bfW - \bfJ} \dkh{\avDk - \bfDk}}\fs 
				+ \dkh{1 + 1 / \gamma} \norm{\dkh{I_{dn} - \bfJ} \dkh{\bfHkn - \bfHk}}\fs \\
				& \leq \lambda^2 \dkh{1 + \gamma} \norm{\avDk - \bfDk}\fs
				+ \dkh{1 + 1 / \gamma} \norm{\bfHkn - \bfHk}\fs,
			\end{aligned}
		\end{equation*}
		where $\gamma > 0$ is a constant defined in \eqref{eq:constants-1}.
		According to Proposition \ref{prop:bound}, 
		the inequality $\norm{\bfXk}\ff \leq \sqrt{7dp / 6} + \sqrt{d}$ 
		holds for all $k \in \bN$.
		Hence, it follows that
		\begin{equation*}
			\norm{\bfHkn - \bfHk}\ff \leq (L_g + \beta L_b) \norm{\bfXkn - \bfXk}\ff.
		\end{equation*}
		Moreover, it can be readily verified that
		\begin{equation*}
			\begin{aligned}
				\norm{\bfXkn - \bfXk}\fs
				& = \norm{\bfW \dkh{\bfXk - \eta \bfDk} - \bfXk}\fs \\
				& = \norm{\dkh{I_{dn} - \bfW} \dkh{\avXk - \bfXk} 
				+ \eta \dkh{\bfW - \bfJ} \dkh{\avDk - \bfDk} - \eta \bfW \avDk}\fs \\
				& \leq 6 \norm{\avXk - \bfXk}\fs + 3 \eta^2 \lambda^2 \norm{\avDk - \bfDk}\fs
				+ 3 \eta^2 \norm{\avDk}\fs.
			\end{aligned}
		\end{equation*}
		According to Lemma \ref{le:dk}, we have
		\begin{equation*}
			\begin{aligned}
			\norm{\bfXkn - \bfXk}\fs 
			& \leq 6 \dkh{1 + \eta^2 \dkh{L_g + \beta L_b}^2 }\norm{\avXk - \bfXk}\fs
			 + 3 \eta^2 \lambda^2 \norm{\avDk - \bfDk}\fs
			+ 6 d \eta^2 \norm{H (\barXk)}\fs \\
			&\leq 12 \norm{\avXk - \bfXk}\fs
			+ 3 \eta^2 \lambda^2 \norm{\avDk - \bfDk}\fs
			+ \dfrac{3 \eta}{2 \rho C_2} \norm{H (\barXk)}\fs,
			\end{aligned}
		\end{equation*}
		where the last inequality follows from the condition
		$\eta \leq \min\{\bar{\eta}_7, \bar{\eta}_8\}$.
		Combing the above relationships, we can obtain that
		\begin{equation*}
			\begin{aligned}
				\norm{\avDkn - \bfDkn}\fs
				\leq {} & \lambda^2 \dkh{1 + \gamma} 
				\dkh{1 + 3 \eta^2 (L_g + \beta L_b)^2 / \gamma} \norm{\avDk - \bfDk}\fs \\
				& + C_2 \norm{\avXk - \bfXk}\fs + \dfrac{\eta}{8 \rho} \norm{H (\barXk)}\fs.
			\end{aligned}
		\end{equation*}
		Moreover, it follows from the condition $\eta \leq \bar{\eta}_9$ that the inequality
		\begin{equation*}
			\lambda^2 \dkh{1 + \gamma} 
			\dkh{1 + 3 \eta^2 (L_g + \beta L_b)^2 / \gamma}
			\leq \dfrac{5 + 3 \lambda^2}{8}
		\end{equation*}
		holds. The proof is completed. 
	\end{proof}

	Next, we evaluate the descent property of the sequence $\{ h(\barXk)\}$.
	
	\begin{lemma}\label{le:des-h}
		Suppose the conditions in Assumptions \ref{asp:smooth}, \ref{asp:network}, 
		and \ref{asp:parameters} hold.
		Let $\{\dkh{\bfXk, \bfDk}\}$ be the iterate sequence 
		generated by Algorithm \ref{alg:DESTINY} with $X_{\mathrm{initial}} \in \cR$.
		Then for any $k \in \bN$, it holds that
		\begin{equation*}
			h (\barXkn) \leq h (\barXk) 
			+ \dfrac{1 - \lambda^2}{8} \norm{\avXk - \bfXk}\fs
			+ \dfrac{21}{2} \eta L_f^2 \norm{\barXk\zz \barXk - I_p}\fs
			- \dfrac{5}{8} \eta \norm{H (\barXk)}\fs.
		\end{equation*}
	\end{lemma}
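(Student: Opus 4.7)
The plan is to apply a Lipschitz-gradient descent inequality to $h$ at the averaged iterate $\barXk$ and then control the deviation of the tracked direction $\barDk = \barHk$ both from the true gradient $\nabla h(\barXk)$ and from the local ``oracle'' direction $H(\barXk)$. Combining \eqref{eq:avg-iter} with the identity $\barDk = \barHk$ gives $\barXkn = \barXk - \eta\barHk$, and Proposition~\ref{prop:bound} ensures $\barXk \in \cR$ and $\|\bfXk\|\ff \leq \sqrt{7dp/6} + \sqrt{d}$, so every iterate lies in the region $\cB$ on which $\nabla h = \nabla g + \beta\nabla b$ is Lipschitz continuous with constant controlled by $L_g + \beta L_b$. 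The descent inequality therefore yields
\[
h(\barXkn) \leq h(\barXk) - \eta\,\langle \nabla h(\barXk), \barHk\rangle + \frac{L_g + \beta L_b}{2}\,\eta^{2}\,\|\barHk\|\fs.
\]

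Next, I would introduce the two residuals $\Delta_k := \barHk - H(\barXk)$ and $\Gamma_k := \nabla h(\barXk) - H(\barXk)$. The Lipschitz estimate already carried out in \eqref{eq:diff-hd} gives
\[
\|\Delta_k\|\ff \leq \frac{L_g + \beta L_b}{\sqrt{d}}\,\|\avXk - \bfXk\|\ff.
\]
For $\Gamma_k$, a direct comparison of the formulas for $\nabla g$ and $G$ shows that the $X\sym(\cdot)$ and $\beta$-penalty contributions cancel, leaving $\Gamma_k = \tfrac{3}{2}\bigl(\nabla f(\barXk) - \nabla f(\barXk\barXk\zz\barXk)\bigr)$. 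The definition of $L_f$ together with $\|\barXk\|_2^2 \leq 7/6$ (valid on $\cR$, cf.\ the proof of Lemma~\ref{le:omega}) then produces
\[
\|\Gamma_k\|\fs \leq \tfrac{21}{8}\,L_f^{2}\,\|\barXk\zz\barXk - I_p\|\fs.
\]

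I would then expand $\langle \nabla h(\barXk), \barHk\rangle = \|H(\barXk)\|\fs + \langle \Gamma_k, H(\barXk)\rangle + \langle H(\barXk), \Delta_k\rangle + \langle \Gamma_k, \Delta_k\rangle$ and apply Young's inequality $|\langle A, B\rangle| \leq a\|A\|\fs + \tfrac{1}{4a}\|B\|\fs$ to each of the three cross products, using $\|\barHk\|\fs \leq 2\|H(\barXk)\|\fs + 2\|\Delta_k\|\fs$ and the stepsize bound $\eta \leq \bar{\eta}_7$ to dominate the quadratic remainder by $\tfrac{\eta}{8}\bigl(\|H(\barXk)\|\fs + \|\Delta_k\|\fs\bigr)$. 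The Young weights are tuned so that (i) the total $\|H(\barXk)\|\fs$ coefficient collapses to $-\tfrac{5}{8}\eta$; (ii) the $\|\Gamma_k\|\fs$ contribution, after multiplication by the $\tfrac{21}{8}L_f^{2}$ factor, produces exactly $\tfrac{21}{2}\eta L_f^{2}\|\barXk\zz\barXk - I_p\|\fs$; and (iii) the remaining $\|\Delta_k\|\fs$ coefficient, of the form $O(\eta)\cdot(L_g + \beta L_b)^{2}/d$, is absorbed into $\tfrac{1-\lambda^{2}}{8}\|\avXk - \bfXk\|\fs$ via the stepsize bound $\eta \leq \bar{\eta}_{11}$.

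The main difficulty is the coefficient bookkeeping in this last step: the Young weights on the four inner-product terms, the bound on the descent-lemma remainder, and the stepsize thresholds from Assumption~\ref{asp:parameters} must all be synchronized so that the three claimed constants $-\tfrac{5}{8}$, $\tfrac{21}{2}$, and $\tfrac{1-\lambda^{2}}{8}$ emerge simultaneously. A mismatch in any single weight either leaks $\|H(\barXk)\|\fs$ mass onto the wrong side of the inequality or forces a stepsize restriction tighter than what Assumption~\ref{asp:parameters} provides.
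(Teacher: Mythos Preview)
Your proposal is correct and follows essentially the same approach as the paper: descent lemma for $h$, splitting via the two residuals $\Delta_k=\barHk-H(\barXk)$ and $\Gamma_k=\nabla h(\barXk)-H(\barXk)=\tfrac{3}{2}(\nabla f(\barXk)-\nabla f(\barXk\barXk\zz\barXk))$, then Young inequalities with tuned weights and the stepsize thresholds from Assumption~\ref{asp:parameters}. The only cosmetic difference is that the paper expands $\langle\nabla h(\barXk),\barDk\rangle=\langle\Gamma_k,\barDk\rangle+\langle H(\barXk),\Delta_k\rangle+\|H(\barXk)\|\fs$ (two cross terms, the first bounded against $\|\barDk\|\fs$ via \eqref{eq:dk}) whereas you expand into three cross terms $\langle\Gamma_k,H(\barXk)\rangle+\langle H(\barXk),\Delta_k\rangle+\langle\Gamma_k,\Delta_k\rangle$; this changes the Young weights slightly and, in particular, the paper also invokes $\bar\eta_{10}$ when bounding the quadratic remainder through \eqref{eq:dk}, which you should expect to need (or an equivalent restriction) once you work out the constants.
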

	
	\begin{proof}
		According to Proposition \ref{prop:bound}, 
		we know that the inequality $\norm{\bfXk}\ff \leq \sqrt{7dp / 6} + \sqrt{d}$ 
		holds for any $k \in \bN$.
		Then it follows from the local Lipschitz continuity of $h$ that
		\begin{equation*}
			\begin{aligned}
				h (\barXkn) 
				& = h (\barXk - \eta \barDk)
				\leq h (\barXk) - \eta \jkh{\nabla h (\barXk), \barDk} 
				+ \dfrac{1}{2} \eta^2 \dkh{L_g + \beta L_b} \norm{\barDk}\fs \\
				& = h (\barXk) - \eta \jkh{\nabla h (\barXk) - H (\barXk), \barDk} 
				- \eta \jkh{H (\barXk), \barDk} 
				+ \dfrac{1}{2} \eta^2 \dkh{L_g + \beta L_b} \norm{\barDk}\fs \\
				& \leq h (\barXk) - \eta \jkh{\nabla h (\barXk) - H (\barXk), \barDk} 
				- \eta \jkh{H (\barXk), \barDk} 
				+  \dfrac{1}{8} \eta \norm{H (\barXk)}\fs \\
				& \quad + \dfrac{1 - \lambda^2}{24} \norm{\avXk - \bfXk}\fs, \\
			\end{aligned}
		\end{equation*}
		where the last inequality follows from \eqref{eq:dk} and the condition 
		$\eta \leq \min \{\bar{\eta}_7, \bar{\eta}_{10}\}$.
		Moreover, we have
		\begin{equation*}
			\begin{aligned}
				& \abs{ \jkh{\nabla h (\barXk) - H (\barXk), \barDk} }
				\leq 4 \norm{\nabla h (\barXk) - H (\barXk)}\fs 
				+ \dfrac{1}{16} \norm{\barDk}\fs \\
				& = 9 \norm{\nabla f (\barXk) - \nabla f (\barXk \barXk\zz \barXk)}\fs 
				+ \dfrac{1}{16} \norm{\barDk}\fs \\
				& \leq 9 L_f^2 \norm{\barXk \dkh{\barXk\zz \barXk - I_p}}\fs 
				+ \dfrac{1}{16} \norm{\barDk}\fs
				\leq \dfrac{21}{2} L_f^2 \norm{\barXk\zz \barXk - I_p}\fs 
				+ \dfrac{1}{16} \norm{\barDk}\fs \\
				& \leq \dfrac{21}{2} L_f^2 \norm{\barXk\zz \barXk - I_p}\fs 
				+ \dfrac{1 - \lambda^2}{24 \eta} \norm{\avXk - \bfXk}\fs
				+ \dfrac{1}{8} \norm{H (\barXk)}\fs,
			\end{aligned}
		\end{equation*}
		where the last inequality results from \eqref{eq:dk} and the condition
		$\eta \leq \bar{\eta}_{11}$. 
		And straightforward manipulations give us
		\begin{equation*}
				\jkh{H (\barXk), \barDk}
				= \jkh{H (\barXk), \barDk - H (\barXk) + H (\barXk)}
				= \jkh{H (\barXk), \barDk - H (\barXk)} + \norm{H (\barXk)}\fs,
		\end{equation*}
		and 
		\begin{equation*}
			\begin{aligned}
				\abs{ \jkh{H (\barXk), \barDk - H (\barXk)} }
				& \leq \dfrac{1}{8} \norm{H (\barXk)}\fs + 2\norm{\barDk - H (\barXk)}\fs \\
				& \leq \dfrac{1}{8} \norm{H (\barXk)}\fs 
				+ \dfrac{1 - \lambda^2}{24 \eta} \norm{\avXk - \bfXk}\fs,
			\end{aligned}
		\end{equation*}
		where the last inequality is implied by \eqref{eq:diff-hd} and the condition 
		$\eta \leq \bar{\eta}_{11}$. 
		Combing the above relationships, 
		we finally arrive at the assertion of this lemma. 
	\end{proof}

	Now we are in the position to introduce the merit function
	\begin{equation*}
		\tilde{h}(\bfX, \bfD) := h ((\bfone_d \otimes I_n)\zz \bfX) + \norm{\bfJ \bfX - \bfX}\fs
		+ \rho \norm{\bfJ \bfD - \bfD}\fs,
	\end{equation*}
	where $\rho > 0$ is a constant defined in \eqref{eq:constants-2}.
	
	\begin{proposition}\label{prop:des-L}
		Suppose the conditions in Assumptions \ref{asp:smooth}, \ref{asp:network}, 
		and \ref{asp:parameters} hold.
		Let $\{\dkh{\bfXk, \bfDk}\}$ be the iterate sequence 
		generated by Algorithm \ref{alg:DESTINY} with $X_{\mathrm{initial}} \in \cR$.
		Then for any $k \in \bN$, the following sufficient descent condition holds,
		which implies that the sequence $\{ \tilde{h} (\bfXk, \bfDk)\}$ 
		is monotonically non-increasing.
		\begin{equation} \label{eq:des-merit}
			\begin{aligned}
				\tilde{h}(\bfXkn, \bfDkn) 
				\leq {} & \tilde{h}(\bfXk, \bfDk) 
				- \dfrac{3}{16} \dkh{1 - \lambda^2} \norm{\avXk - \bfXk}\fs
				- \dfrac{3 \rho}{16} \dkh{1 - \lambda^2} \norm{\avDk - \bfDk}\fs \\
				& - \dfrac{\eta}{2} \norm{G (\barXk)}\fs
				- \dfrac{\eta}{2} L_f^2 \norm{\barXk\zz \barXk - I_p}\fs.
			\end{aligned}
		\end{equation}
	\end{proposition}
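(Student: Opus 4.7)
The plan is to derive the descent inequality \eqref{eq:des-merit} by splitting the difference $\tilde{h}(\bfXkn,\bfDkn)-\tilde{h}(\bfXk,\bfDk)$ into its three natural summands and then feeding in, respectively, Lemma \ref{le:des-h} for the change in $h(\barXk)$, Lemma \ref{le:consensus-error} for the change in $\|\bfJ\bfX-\bfX\|\fs$, and Lemma \ref{le:tracking-error} (scaled by $\rho$) for the change in $\|\bfJ\bfD-\bfD\|\fs$. Note that by Proposition \ref{prop:bound} all iterates stay in $\cR$ (lifted to $\cB$ for the stacked variable), so every Lipschitz estimate and the bound of Lemma \ref{le:expen} are legitimate at each step $k$.

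After summation, the coefficient in front of $\|\avXk-\bfXk\|\fs$ is
$\tfrac{1-\lambda^2}{8}-\tfrac{1-\lambda^2}{2}+\rho C_2$; plugging in $\rho=\tfrac{3(1-\lambda^2)}{16C_2}$ (i.e., the definition in \eqref{eq:constants-2}) reduces this exactly to $-\tfrac{3(1-\lambda^2)}{16}$, producing the first negative term in \eqref{eq:des-merit}. The coefficient in front of $\rho\|\avDk-\bfDk\|\fs$ is $\rho\bigl(\tfrac{5+3\lambda^2}{8}-1\bigr)+\rho\eta^{2}C_{1}/\rho\cdot(\text{something})$; more precisely, combining the $-\rho\tfrac{3(1-\lambda^2)}{8}$ from Lemma \ref{le:tracking-error} with the $\rho\eta^{2}C_{1}$ surplus from Lemma \ref{le:consensus-error} and invoking $\eta\le\bar{\eta}_{12}$ (so that $\eta^{2}C_{1}\le\tfrac{3\rho(1-\lambda^2)}{16}$) yields the second negative term $-\tfrac{3\rho(1-\lambda^2)}{16}\|\avDk-\bfDk\|\fs$. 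The coefficient of $\|H(\barXk)\|\fs$ collapses cleanly: $-\tfrac{5}{8}\eta+\tfrac{\eta}{8}=-\tfrac{\eta}{2}$.

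To convert the $-\tfrac{\eta}{2}\|H(\barXk)\|\fs$ term and the leftover $\tfrac{21}{2}\eta L_{f}^{2}\|\barXk\zz\barXk-I_{p}\|\fs$ term into the advertised form, I would invoke Lemma \ref{le:expen} (valid because $\barXk\in\cR$), which gives
\begin{equation*}
-\tfrac{\eta}{2}\norm{H(\barXk)}\fs\le -\tfrac{\eta}{2}\norm{G(\barXk)}\fs-\tfrac{\eta\beta}{2}\norm{\barXk\zz\barXk-I_{p}}\fs.
\end{equation*}
Combining this with the residual $\tfrac{21}{2}\eta L_{f}^{2}\|\barXk\zz\barXk-I_{p}\|\fs$ and using the parameter condition $\beta>22L_{f}^{2}$ from Assumption \ref{asp:parameters} leaves precisely $-\tfrac{\eta}{2}\|G(\barXk)\|\fs-\tfrac{\eta}{2}L_{f}^{2}\|\barXk\zz\barXk-I_{p}\|\fs$, matching the last line of \eqref{eq:des-merit}.

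The main obstacle, as is usual with gradient tracking analyses, is bookkeeping: all twelve bounds on $\eta$ in Assumption \ref{asp:parameters} have been placed there exactly so that each of the three ingredient lemmas (and the present collapse of coefficients via $\rho$, $\bar{\eta}_{12}$, and $\beta>22L_{f}^{2}$) delivers a constant that can be absorbed into the final negative expression. The crux is checking that the $\rho\eta^{2}C_{1}\|\avDk-\bfDk\|\fs$ cross-term — which is the only term coming from the consensus estimate that points in the wrong direction for the $\bfD$-block — is killed by half of the contraction coefficient $\rho\tfrac{3(1-\lambda^{2})}{8}$ obtained from the tracking estimate; that is the unique place where $\bar{\eta}_{12}$ matters. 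Once this is verified, monotonicity of $\{\tilde{h}(\bfXk,\bfDk)\}$ follows at once from the nonnegativity of $\|G(\barXk)\|\fs$, $\|\barXk\zz\barXk-I_{p}\|\fs$, $\|\avXk-\bfXk\|\fs$, and $\|\avDk-\bfDk\|\fs$.
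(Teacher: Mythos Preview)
Your proposal is correct and follows essentially the same route as the paper's proof: sum the three ingredient lemmas, use the definition of $\rho$ to collapse the $\|\avXk-\bfXk\|\fs$ coefficient to $-\tfrac{3}{16}(1-\lambda^2)$, use $\eta\le\bar{\eta}_{12}$ to absorb the $\eta^{2}C_{1}\|\avDk-\bfDk\|\fs$ cross-term, and then invoke Lemma \ref{le:expen} together with $\beta>22L_{f}^{2}$ to turn $-\tfrac{\eta}{2}\|H(\barXk)\|\fs+\tfrac{21}{2}\eta L_{f}^{2}\|\barXk\zz\barXk-I_{p}\|\fs$ into the final two terms. One tiny slip: the surplus from Lemma \ref{le:consensus-error} is $\eta^{2}C_{1}\|\avDk-\bfDk\|\fs$, not $\rho\eta^{2}C_{1}\|\avDk-\bfDk\|\fs$ (the $\|\bfJ\bfX-\bfX\|\fs$ block of the merit function carries coefficient $1$, not $\rho$); your subsequent use of $\bar{\eta}_{12}$ already has this right.
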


	\begin{proof}
		Combing Lemmas \ref{le:consensus-error}, \ref{le:tracking-error} and \ref{le:des-h},
		we have
		\begin{equation*}
			\begin{aligned}
				\tilde{h}(\bfXkn, \bfDkn) 
				\leq {} & \tilde{h}(\bfXk, \bfDk) 
				- \dfrac{3}{16} \dkh{1 - \lambda^2} \norm{\avXk - \bfXk}\fs
				 - \dfrac{\eta}{2} \norm{H (\barXk)}\fs \\
				& - \rho \dkh{ \dfrac{3}{8} \dkh{1 - \lambda^2} - \dfrac{\eta^2 C_1}{\rho} } \norm{\avDk - \bfDk}\fs
				+ \dfrac{21}{2} \eta L_f^2 \norm{\barXk\zz \barXk - I_p}\fs \\
				\leq {} & \tilde{h}(\bfXk, \bfDk) 
				- \dfrac{3}{16} \dkh{1 - \lambda^2} \norm{\avXk - \bfXk}\fs
				- \dfrac{3 \rho}{16} \dkh{1 - \lambda^2} \norm{\avDk - \bfDk}\fs \\
				& - \dfrac{\eta}{2} \norm{H (\barXk)}\fs
				+ \dfrac{21}{2} \eta L_f^2 \norm{\barXk\zz \barXk - I_p}\fs,
			\end{aligned}
		\end{equation*}
		where the last inequality follows from the condition $\eta \leq \bar{\eta}_{12}$.
		Together with Lemma \ref{le:expen} and $\beta \geq 22 L_f^2$,
		we can obtain the sufficient descent condition \eqref{eq:des-merit},
		which infers that
		\begin{equation*}
			\tilde{h}(\bfXkn, \bfDkn) \leq \tilde{h}(\bfXk, \bfDk).
		\end{equation*}
		The proof is completed.
		\end{proof}
	
\subsection{Global convergence}

	Finally, we can establish the global convergence of Algorithm \ref{alg:DESTINY}
	together with the worst case complexity.
	
	\begin{theorem}
		Suppose the conditions in Assumptions \ref{asp:smooth}, \ref{asp:network}, 
		and \ref{asp:parameters} hold.
		Let $\{\dkh{\bfXk, \bfDk}\}$ be the iterate sequence 
		generated by Algorithm \ref{alg:DESTINY} with $X_{\mathrm{initial}} \in \cR$.
		Then $\{\dkh{\bfXk, \bfDk}\}$ has at least one accumulation point.
		Moreover, for any accumulation point $\dkh{\bfX^{\ast}, \bfD^{\ast}}$,
		we have $\bfX^{\ast} = (\bfone_d \otimes I_n) \barX^{\ast}$,
		where $\barX^{\ast} \in \Snp$ is a first-order stationary point 
		of the problem \eqref{eq:opt-stiefel}.
		Finally, the following relationships hold,
		which results in the global sublinear convergence rate of Algorithm \ref{alg:DESTINY}.
		\begin{equation} \label{eq:sublinear-consensus}
			\min_{k = 0, 1, \dotsc, K - 1} \norm{\avXk - \bfXk}\fs
			\leq \dfrac{16 (\tilde{h}(\bfX_0, \bfD_0) - \underline{h})}
			{3 \dkh{1 - \lambda^2} K},
		\end{equation}
		\begin{equation} \label{eq:sublinear-tracking}
			\min_{k = 0, 1, \dotsc, K - 1} \norm{G (\barXk)}\fs
			\leq \dfrac{2 (\tilde{h}(\bfX_0, \bfD_0) - \underline{h})}{\eta K},
		\end{equation}
		and
		\begin{equation} \label{eq:sublinear-gradient}
			\min_{k = 0, 1, \dotsc, K - 1} \norm{\barXk\zz \barXk - I_p}\fs
			\leq \dfrac{2 (\tilde{h}(\bfX_0, \bfD_0) - \underline{h})}{\eta L_f^2 K},
		\end{equation}
		where $\underline{h}$ is a constant.
	\end{theorem}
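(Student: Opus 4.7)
The strategy is to combine Proposition \ref{prop:bound} (boundedness) with Proposition \ref{prop:des-L} (sufficient descent of the merit function) and then extract all conclusions by a telescoping argument.

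First I would observe that by Proposition \ref{prop:bound} the stacked iterates $\bfXk$ and $\bfDk$ are uniformly bounded (in $\bR^{dn\times p}$) for every $k\in\bN$, so the Bolzano--Weierstrass theorem already guarantees that $\{(\bfXk,\bfDk)\}$ admits at least one accumulation point. The next ingredient is a uniform lower bound $\underline{h}$ on $\tilde{h}(\bfXk,\bfDk)$. The penalty term $\|\bfJ\bfX-\bfX\|\fs$ and the tracking term $\rho\|\bfJ\bfD-\bfD\|\fs$ are non-negative, and $h(\barXk)=g(\barXk)+\beta b(\barXk)$ is the continuous image of the bounded sequence $\{\barXk\}\subset\cR$; hence $\tilde{h}$ is bounded below along the iterates and such an $\underline{h}$ exists.

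Next I would telescope the sufficient descent inequality \eqref{eq:des-merit} from $k=0$ to $k=K-1$. This yields
\begin{equation*}
\sum_{k=0}^{K-1}\!\left[\dfrac{3(1-\lambda^2)}{16}\norm{\avXk-\bfXk}\fs+\dfrac{3\rho(1-\lambda^2)}{16}\norm{\avDk-\bfDk}\fs+\dfrac{\eta}{2}\norm{G(\barXk)}\fs+\dfrac{\eta L_f^2}{2}\norm{\barXk\zz\barXk-I_p}\fs\right]\leq \tilde{h}(\bfX_0,\bfD_0)-\underline{h}.
\end{equation*}
Each of the three rate estimates \eqref{eq:sublinear-consensus}, \eqref{eq:sublinear-tracking}, \eqref{eq:sublinear-gradient} follows immediately: replace the running sum by $K$ times its minimum and divide by the corresponding positive coefficient. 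As a byproduct, all four non-negative summands are themselves summable over $k\in\bN$, so each of $\|\avXk-\bfXk\|\ff$, $\|\avDk-\bfDk\|\ff$, $\|G(\barXk)\|\ff$ and $\|\barXk\zz\barXk-I_p\|\ff$ converges to $0$ along the full sequence.

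Finally I would characterize any accumulation point $(\bfX^{\ast},\bfD^{\ast})$: extract a subsequence $\{(\bfX_{k_j},\bfD_{k_j})\}\to(\bfX^{\ast},\bfD^{\ast})$ and pass to the limit in the four vanishing quantities. From $\|\avXk-\bfXk\|\ff\to 0$ and the definition $\avXk=\bfJ\bfXk=(\bfone_d\otimes I_n)\barXk$, together with the continuity of averaging, we obtain $\bfX^{\ast}=(\bfone_d\otimes I_n)\barX^{\ast}$ with $\barX_{k_j}\to\barX^{\ast}$. From $\|\barXk\zz\barXk-I_p\|\ff\to 0$ and continuity, $\barX^{\ast}\in\Snp$. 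From $\|G(\barXk)\|\ff\to 0$ and continuity, $G(\barX^{\ast})=0$; but since $\barX^{\ast}\in\Snp$ we have $\barX^{\ast\top}\barX^{\ast}=I_p$, so $G(\barX^{\ast})=\nabla f(\barX^{\ast})-\barX^{\ast}\sym(\barX^{\ast\top}\nabla f(\barX^{\ast}))=\grad f(\barX^{\ast})$, i.e.\ $\barX^{\ast}$ is a first-order stationary point of \eqref{eq:opt-stiefel}.

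The only delicate step is identifying a finite $\underline{h}$ and checking that the argument uses only the mild hypotheses already in force: this is where local Lipschitz smoothness (Assumption \ref{asp:smooth}) together with the invariant bound $\barXk\in\cR$, $\norm{\bfXk}\ff\leq\sqrt{7dp/6}+\sqrt{d}$ from Proposition \ref{prop:bound} is essential, since the standard gradient-tracking analyses assume global Lipschitz smoothness which fails on $\Snp$. Once this boundedness propagates through every constant in \eqref{eq:constants-1}--\eqref{eq:constants-2}, the telescoping argument is routine.
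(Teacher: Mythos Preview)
Your proposal is correct and follows essentially the same route as the paper: boundedness from Proposition \ref{prop:bound} gives both the existence of accumulation points (Bolzano--Weierstrass) and a finite lower bound $\underline{h}$ on the merit function, after which telescoping \eqref{eq:des-merit} simultaneously yields the three rate bounds and the vanishing of $\norm{\avXk-\bfXk}\ff$, $\norm{G(\barXk)}\ff$, $\norm{\barXk\zz\barXk-I_p}\ff$, and the characterization of accumulation points follows by continuity together with the identity $G(X)=\grad f(X)$ on $\Snp$. The ordering differs slightly (the paper establishes the limits first and the rates last), but the logic is identical.
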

	
	\begin{proof}
		According to Lemma \ref{prop:bound},
		we know that the sequence $\{\dkh{\bfXk, \bfDk}\}$ is bounded.
		Hence, the lower boundedness of $\{ \tilde{h} (\bfXk, \bfDk)\}$
		is owing to the continuity of $\tilde{h}$.
		Namely, there exists a constant $\underline{h}$ such that
		$\tilde{h} (\bfXk, \bfDk) \geq \underline{h}$ for any $k \in \bN$.
		Then, it follows from Proposition \ref{prop:des-L} that
		the sequence $\{ \tilde{h} (\bfXk, \bfDk)\}$ is convergent and 
		the following relationships hold.		
		\begin{equation}\label{eq:limit}
			\lim\limits_{k \to \infty} \norm{\avXk - \bfXk}\fs = 0, \quad
			\lim\limits_{k \to \infty} \norm{G (\barXk)}\fs = 0, \quad
			\lim\limits_{k \to \infty} \norm{\barXk\zz \barXk - I_p}\fs = 0.
		\end{equation}
		According to the Bolzano-Weierstrass theorem, it follows that 
		$\{\dkh{\bfXk, \bfDk}\}$  exists an accumulation point, 
		say $\dkh{\bfX^{\ast}, \bfD^{\ast}}$.
		The relationships in \eqref{eq:limit} imply that
		$\bfX^{\ast} = (\bfone_d \otimes I_n) \barX^{\ast}$
		for some $\barX^{\ast} \in \Snp$.
		Moreover, we have
		\begin{equation*}	
			\grad f (\barX^{\ast}) = G (\bar{X}^{\ast})
			= \lim\limits_{k \to \infty} G (\barXk) = 0,
		\end{equation*}
		which implies that $\barX^{\ast}$  is a first-order stationary point of \eqref{eq:opt-stiefel}.
		Finally, we prove that the relationships 
		\eqref{eq:sublinear-consensus}-\eqref{eq:sublinear-gradient} hold.
		Indeed, it follows from Proposition \ref{prop:des-L} that
		\begin{equation*}
			\begin{aligned}
				\sum_{k = 0}^{K - 1} \norm{\avXk - \bfXk}\fs 
				& \leq \dfrac{16}{3 \dkh{1 - \lambda^2}} 
				\sum_{k = 0}^{K - 1} \dkh{\tilde{h}(\bfXk, \bfDk) - \tilde{h}(\bfXkn, \bfDkn)} \\
				& \leq \dfrac{16 (\tilde{h}(\bfX_0, \bfD_0) - \tilde{h}(\bfX_K, \bfD_K))}
				{3 \dkh{1 - \lambda^2}}
				\leq \dfrac{16 (\tilde{h}(\bfX_0, \bfD_0) - \underline{h})}
				{3 \dkh{1 - \lambda^2}},
			\end{aligned}
		\end{equation*}
		which implies the relationship \eqref{eq:sublinear-consensus}.
		The other two relationships can be proved similarly.
		Therefore, we complete the proof.
	\end{proof}

\section{Numerical Simulations}\label{sec:numerical}

	In this section, the numerical performance of DESTINY is evaluated 
	through comprehensive numerical experiments in comparison to state-of-the-art algorithms.
	The corresponding experiments are performed on a workstation
	with two Intel Xeon Gold 6242R CPU processors (at $3.10$GHz$\times 20 \times 2$) 
	and 510GB of RAM under Ubuntu 20.04.
	All the tested algorithms are implemented in the \texttt{Python} language,
	and the communication is realized via the package \texttt{mpi4py}.
	
\subsection{Test problems}\label{subsec:test-problems}

	In the numerical experiments, we test three types of problems, 
	including PCA, OLSR and SDL, 
	which are introduced in Subsection \ref{subsec:broad-applications}.
	
	In the PCA problem \eqref{eq:opt-pca},
	we construct the global data matrix $A\in\Rnm$ 
	(assuming $n\leq m$ without loss of generality)
	by its (economy-form) singular value decomposition as follows.
	\begin{equation}\label{eq:gen-A}
		A = U \Sigma V\zz,
	\end{equation}
	where both $U \in \Rnn$ and $V \in \Rmn$ are orthogonal matrices 
	orthonormalized from randomly generated matrices,
	and $\Sigma \in \Rnn$ is a diagonal matrix with diagonal entries 
	\begin{equation}\label{eq:Sigma_ii}
		\Sigma_{ii}=\xi^{i / 2}, \quad \iin,
	\end{equation}  
	for a parameter $\xi \in (0, 1)$ that determines the decay rate of the singular values of $A$.
	In general, smaller decay rates (with $\xi$ closer to 1) correspond to more difficult cases. 
	Finally, the global data matrix $A$ is uniformly distributed into $d$ agents.
	
	As for the OLSR problem \eqref{eq:opt-olsr} and SDL problem \eqref{eq:opt-sdl},
	we use real-world text and image datasets to construct the global data matrix, respectively.
	
	Unless otherwise specified, the network is randomly generated 
	based on the Erdos-Renyi graph with a fixed probability $\mathtt{prob} \in (0, 1)$
	in the following experiments.
	And we select the mixing matrix $W$ 
	to be the Metroplis constant edge weight matrix \cite{Shi2015}.
	
	In our experiments, we collect and compare two performance measurements:
	(relative) substationarity violation defined by
	\begin{equation*}
		\dfrac{1}{\norm{\grad f (\barX_0)}\ff} \norm{\grad f (\barXk)}\ff,
	\end{equation*}
	and consensus error defined by
	\begin{equation*}
		\sqrt{\dfrac{1}{d} \sumiid \norm{\Xik - \barXk}\fs}.
	\end{equation*}
	When presenting numerical results, we plot the decay of the above two measurements
	versus the round of communications.

\subsection{Default settings}\label{subsec:default-settings}

	In this subsection, we determine the default settings for our algorithm.
	The corresponding experiments are performed 
	on an Erdos-Renyi network with $\mathtt{prob} = 0.5$.
	
	In the first experiment, we test the performances of DESTINY
	with different choices of stepsizes.
	Classical strategies for choosing stepsizes, such as line search,
	are not applicable in the decentralized setting.
	Recently, BB stepsizes have been introduced to the decentralized optimization 
	for strongly convex problems \cite{Gao2022achieving}.
	In order to improve the performance of DESTINY, 
	we intend to apply the following BB stepsizes.
	\begin{equation*}
		\eta^\mathrm{BB}_{i, k} = \abs{\frac{\jkh{S_{i,k}, J_{i,k}}}{\jkh{J_{i,k}, J_{i,k}}}},
	\end{equation*}
	where $S_{i,k} = \Xik - \Xikp$, and $J_{i,k} = \Dik - \Dikp$.
	As illustrated in Figure \ref{fig:PCA_stepsize},
	the above BB stepsizes can significantly enhance the efficiency of DESTINY.
	Hence, in subsequent experiments,
	we equip DESTINY with BB stepsizes by default.
	To the best of our knowledge, this is the first attempt, for nonconvex optimization problems,
	to accelerate the convergence of decentralized algorithms by BB stepsizes.

	\begin{figure}[ht!]
		\centering
		
		\subfigure[Substationarity Violation]{
			\label{subfig:PCA_kkt_stepsize}
			\includegraphics[width=0.3\linewidth]{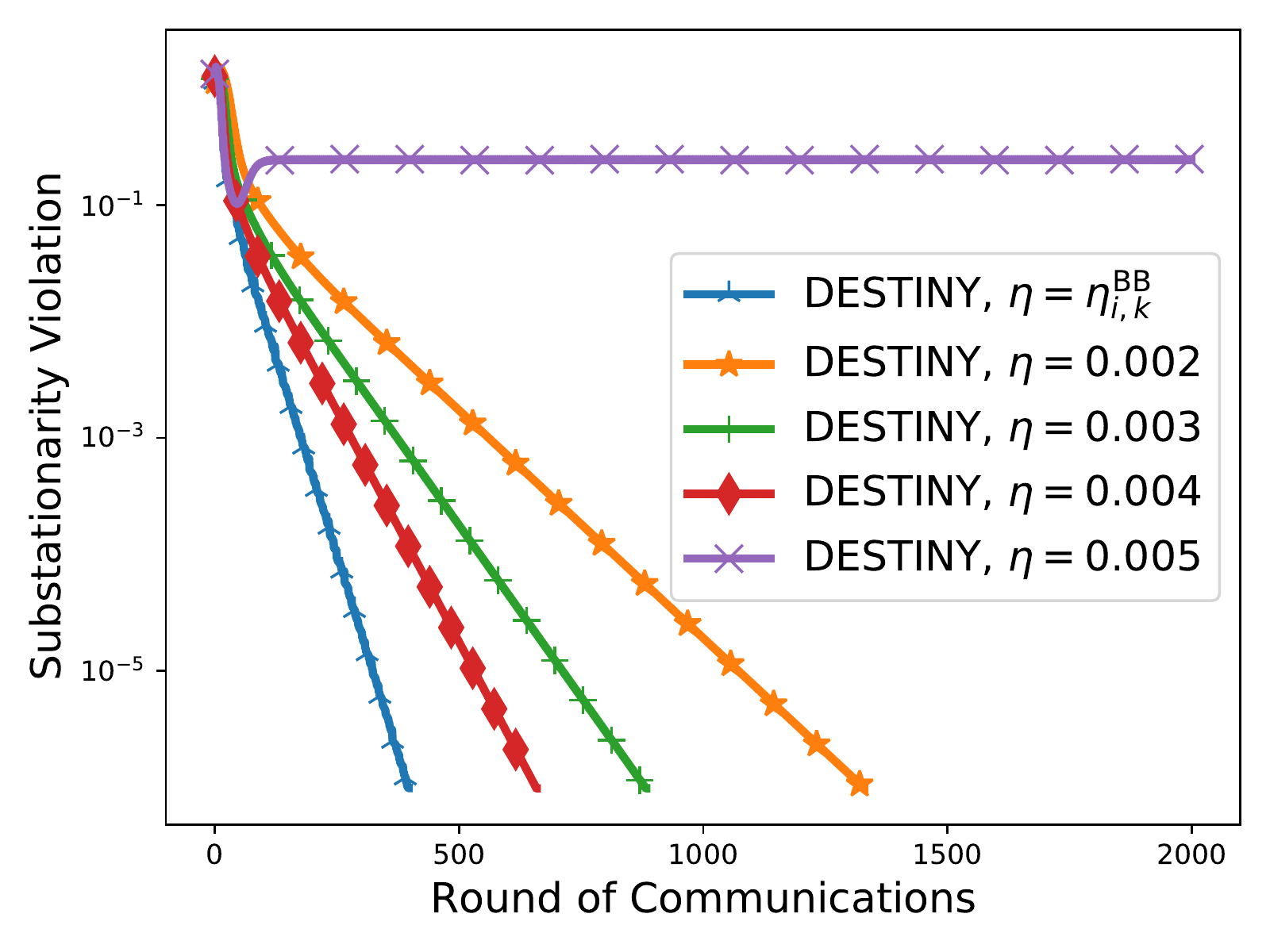}
		}
		\subfigure[Consensus Error]{
			\label{subfig:PCA_cons_stepsize}
			\includegraphics[width=0.3\linewidth]{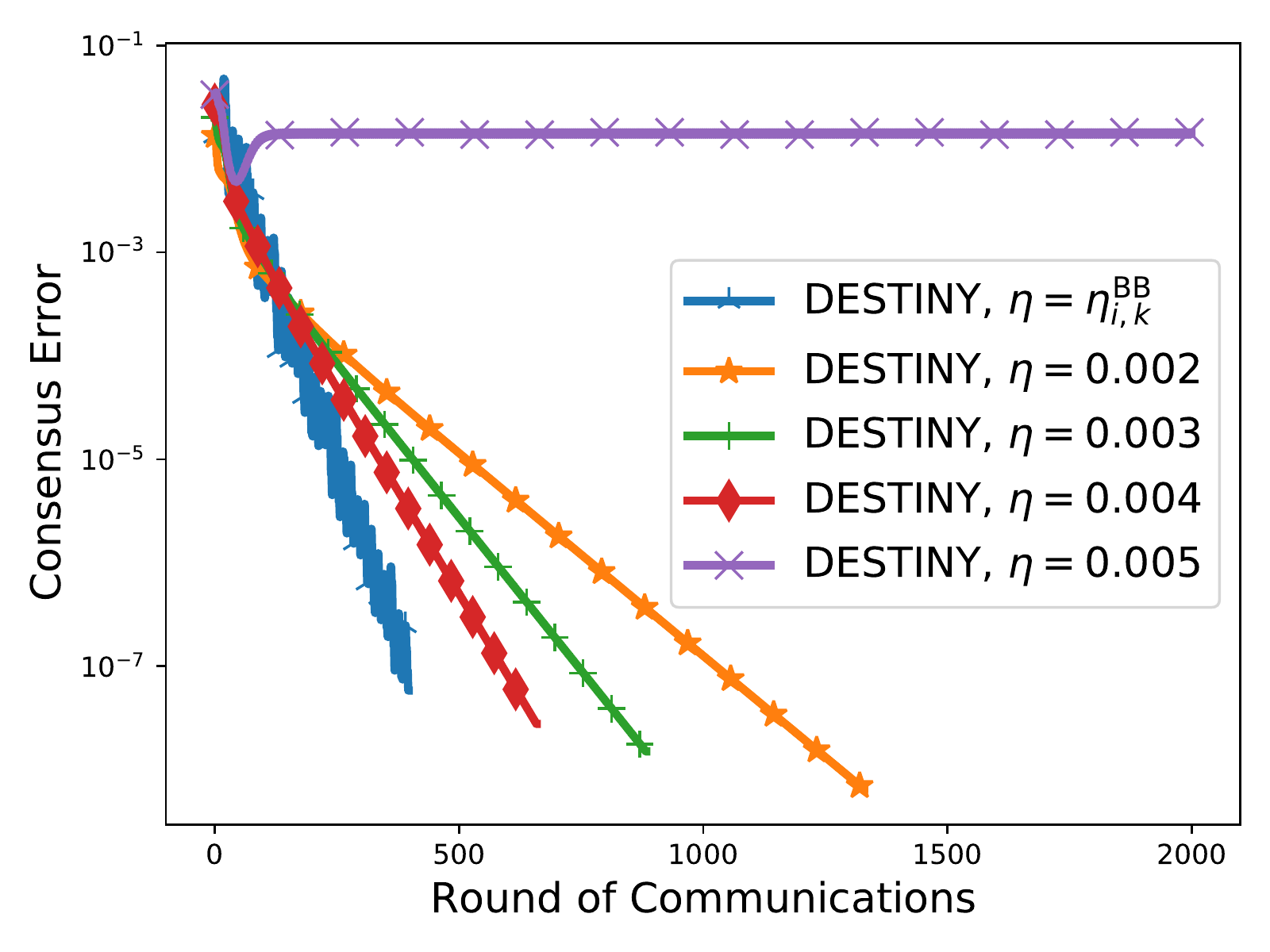}
		}
		
		\caption{Comparison of different stepsizes for DESTINY.
			The experimental settings are 
			$n = 100$, $m = 3200$, $p = 5$, $\xi = 0.9$, and $d = 32$.
			The penalty parameter of DESTINY is $\beta = 1$.}
		\label{fig:PCA_stepsize}
	\end{figure}

	Next, we present that our algorithm is not sensitive to the penalty parameter $\beta$.
	Figure \ref{fig:PCA_beta} depicts the performances of DESTINY 
	with different values of $\beta$ which are distinguished by colors.
	In Figure \ref{subfig:PCA_feas_beta}, the y-axis depicts the feasibility violation 
	in the logarithmic scale, which is defined by:
	\begin{equation*}
		\sqrt{\dfrac{1}{d} \sumiid \norm{\Xik\zz \Xik - I_p}\fs}.
	\end{equation*}
	We can observe that the curves of substationarity violations, 
	consensus errors, and feasibility violations almost coincide with each other,
	which indicates that DESTINY has almost the same performance
	in a wide range of penalty parameters.
	This observation validates the robustness of our algorithm to penalty parameters.
	Therefore, we fix $\beta = 1$ by default in the subsequent experiments.

	\begin{figure}[ht!]
		\centering
		
		\subfigure[Substationarity Violation]{
			\label{subfig:PCA_kkt_beta}
			\includegraphics[width=0.3\linewidth]{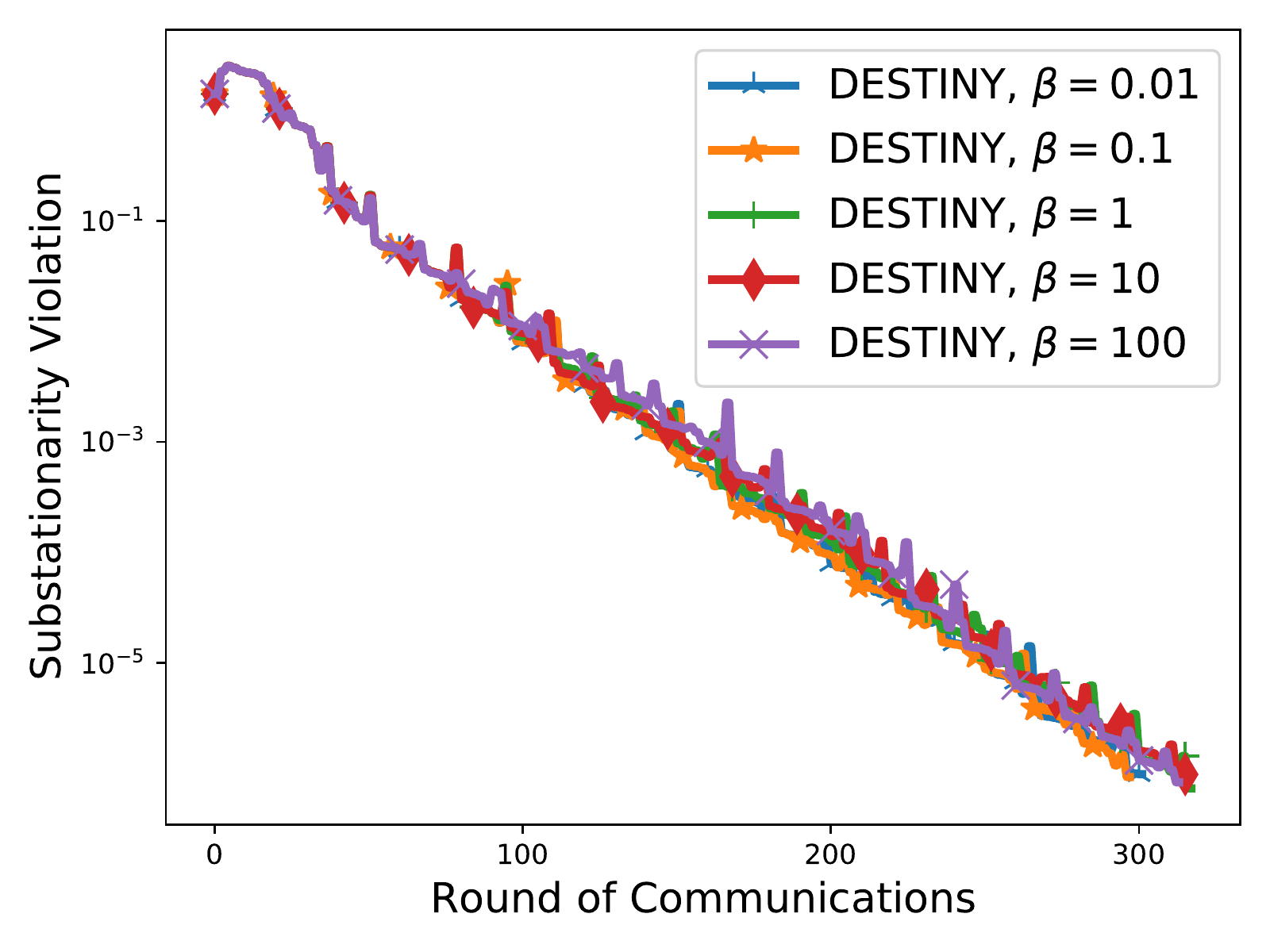}
		}
		\subfigure[Consensus Error]{
			\label{subfig:PCA_cons_beta}
			\includegraphics[width=0.3\linewidth]{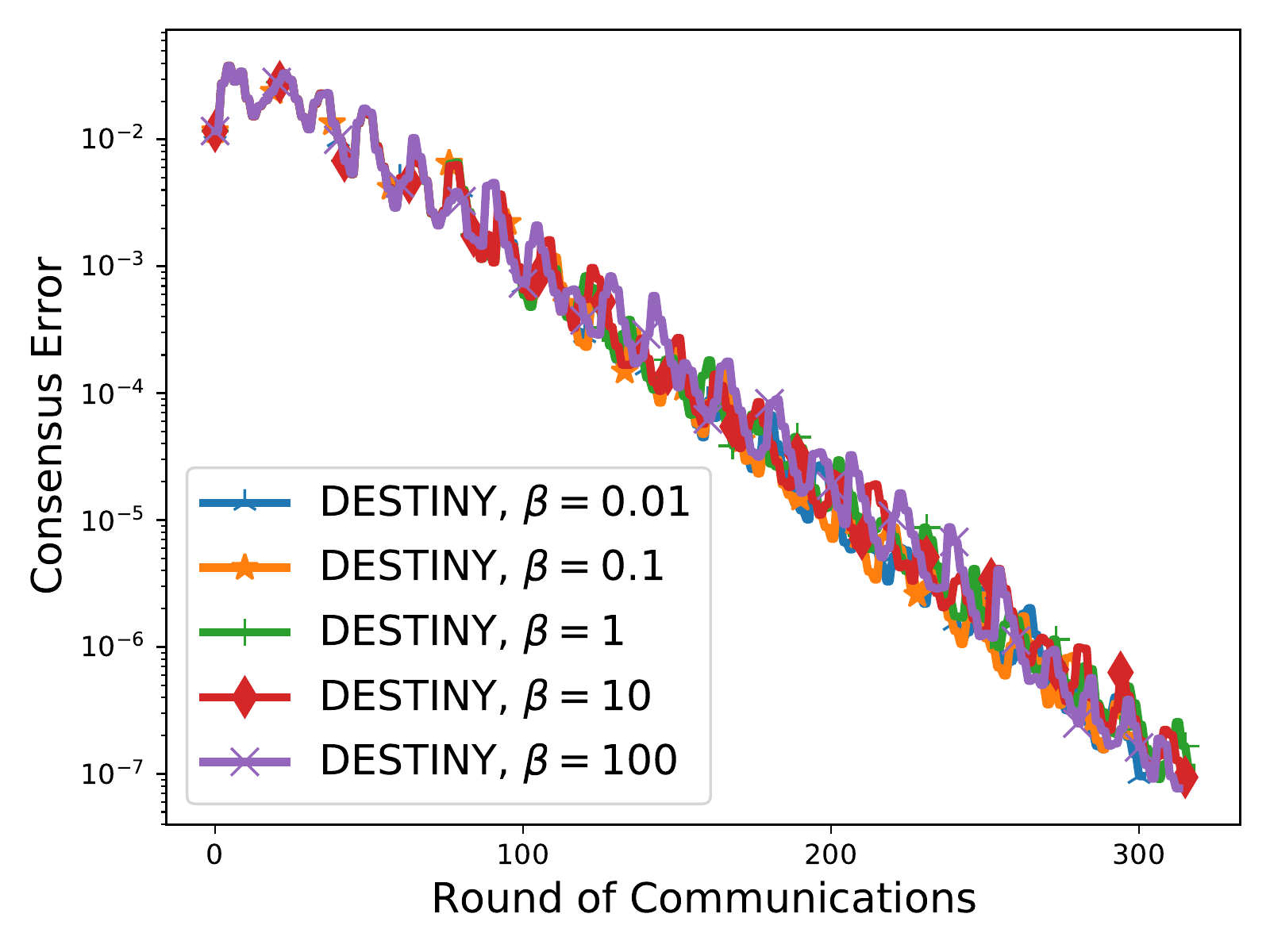}
		}
		\subfigure[Feasibility Violation]{
			\label{subfig:PCA_feas_beta}
			\includegraphics[width=0.3\linewidth]{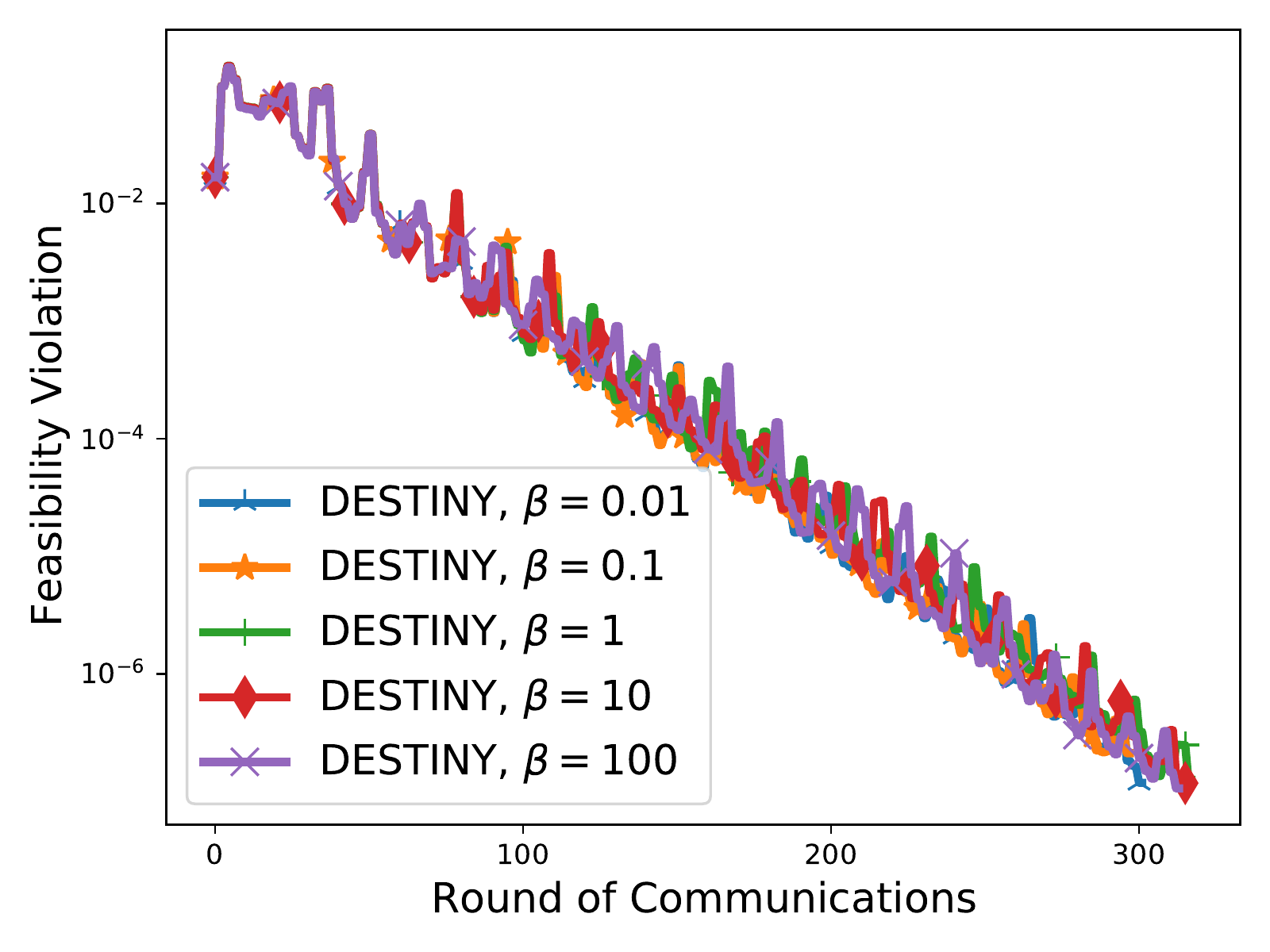}
		}
			
		\caption{Comparison of different penalty parameters for DESTINY.
			The experimental settings are
			$n = 200$, $m = 6400$, $p = 5$, $\xi = 0.8$, and $d = 16$.}
		\label{fig:PCA_beta}
	\end{figure}
	
\subsection{Comparison on decentralized PCA}

	In this subsection, we compare the performance of DESTINY 
	in solving decentralized PCA problems \eqref{eq:opt-pca} with some other state-of-the-art methods,
	including ADSA\cite{Gang2019}, DRGTA \cite{Chen2021decentralized}, and DeEPCA \cite{Ye2021}.
	Among these three algorithms, ADSA and DRGTA require to choose the stepsize.
	In the empirical experiments,
	we find that DRGTA also benefits from the BB stepsizes,
	but ADSA can not converge with it.
	Hence, DRGTA is equipped with the same BB stepsizes as DESTINY,
	while the stepsize of ADSA is fixed as a hand-optimized constant.
	In the following experiments, for fair comparisons,
	both DRGTA and DeEPCA perform only one round of communications per iteration.
		
	In order to assess the quality of the above four solvers,
	we perform a set of experiments on randomly generated matrices
	as described in Subsection \ref{subsec:test-problems}.
	The network is built based on an Erdos-Renyi graph
	with $\mathtt{prob}$ ranging from $0.2$ to $0.6$ with increment $0.2$.
	Other experimental settings are $n = 300$, $m = 3200$, $p = 10$, $\xi = 0.95$, and $d = 32$.
	The numerical results are illustrated in Figure \ref{fig:PCA}.
	It can be observed that DeEPCA and DRGTA can not converge when $\mathtt{prob} = 0.2$,
	which signifies that a single consensus step per iteration
	is not sufficient for these two algorithms to converge in sparse networks.
	Apart from this special case, DeEPCA has the best performance among all the algorithms.
	Moreover, DESTINY is competitive with ADSA and significantly outperforms DRGTA in all cases.
	It should be noted that DeEPCA and ADSA are tailored for decentralized PCA problems
	and can not be extended to generic cases.
	
	\begin{figure}[ht!]
		\centering
		
		\subfigure[$\mathtt{prob} = 0.2$]{
			\label{subfig:PCA_2_kkt}
			\includegraphics[width=0.3\linewidth]{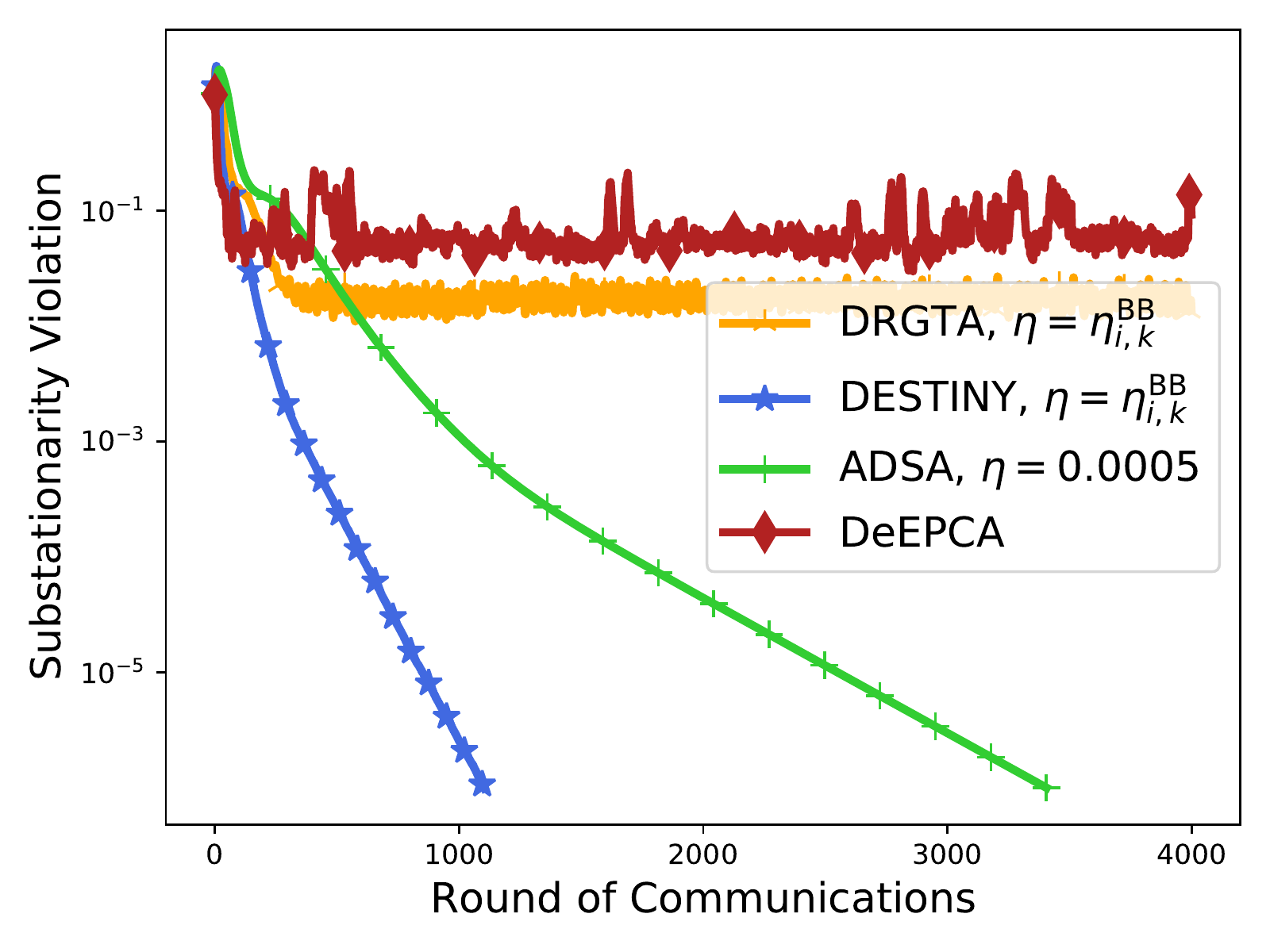}
		}
		\subfigure[$\mathtt{prob} = 0.4$]{
			\label{subfig:PCA_4_kkt}
			\includegraphics[width=0.3\linewidth]{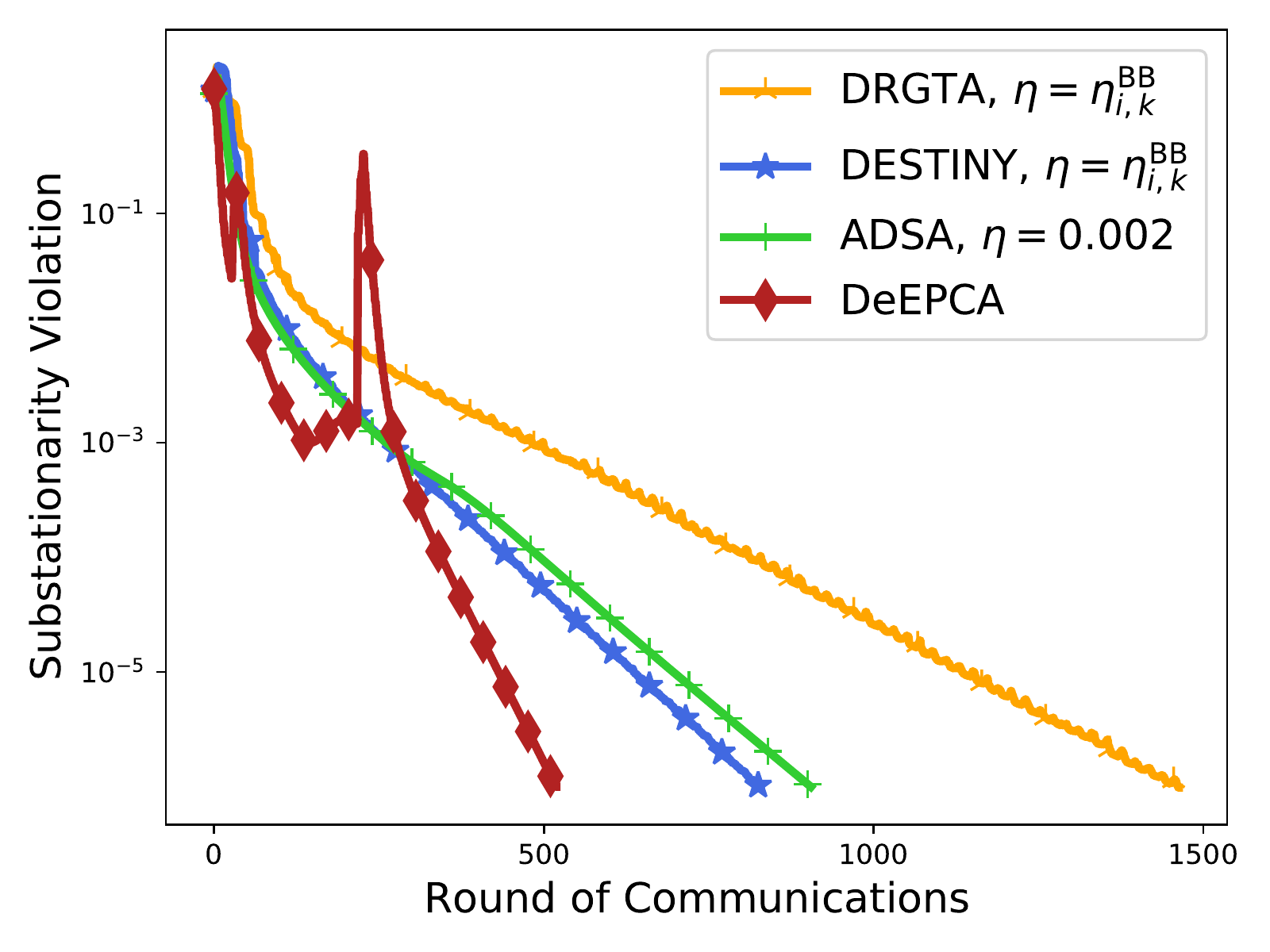}
		}
		\subfigure[$\mathtt{prob} = 0.6$]{
			\label{subfig:PCA_6_kkt}
			\includegraphics[width=0.3\linewidth]{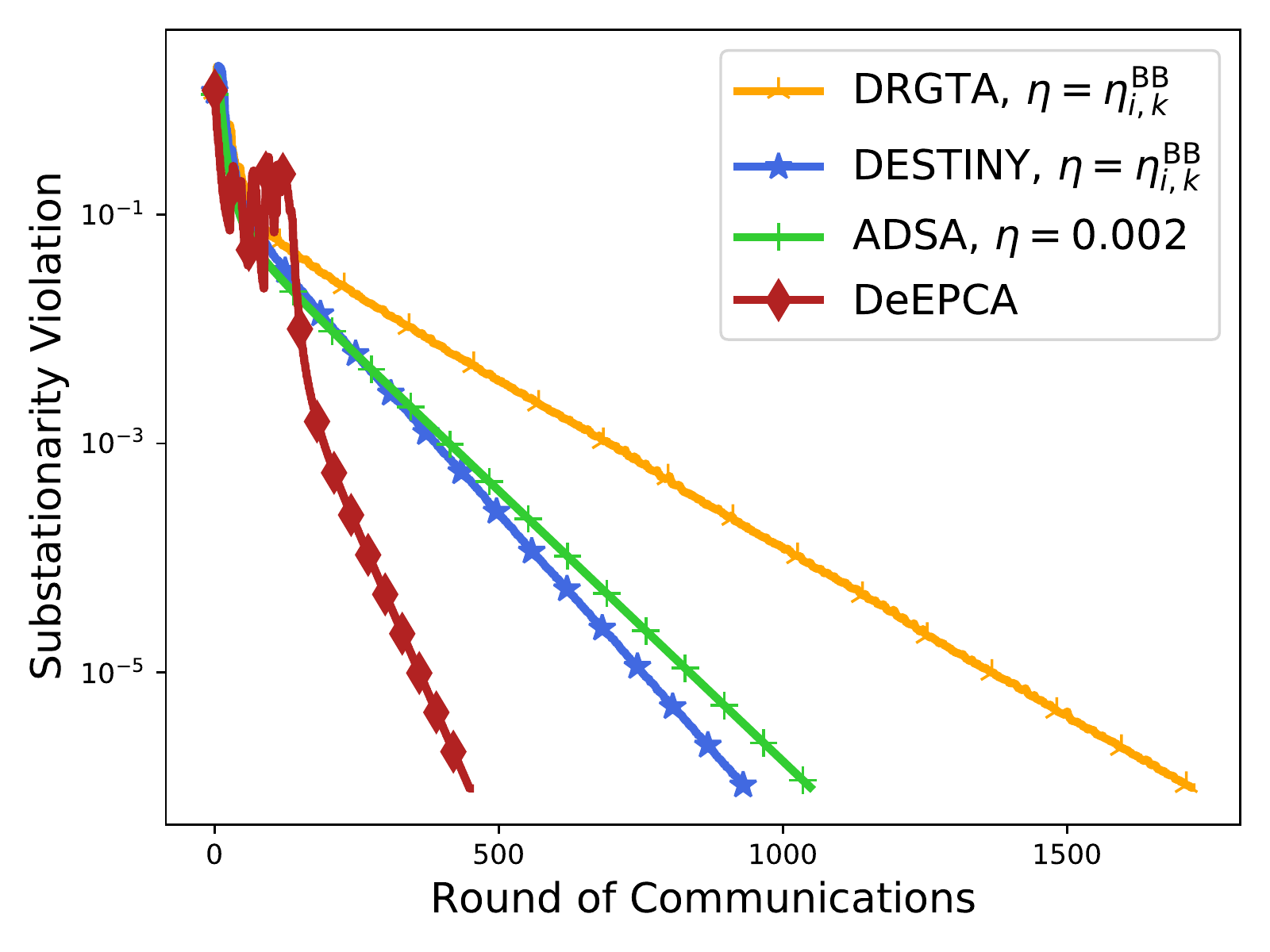}
		}

		\subfigure[$\mathtt{prob} = 0.2$]{
			\label{subfig:PCA_2_cons}
			\includegraphics[width=0.3\linewidth]{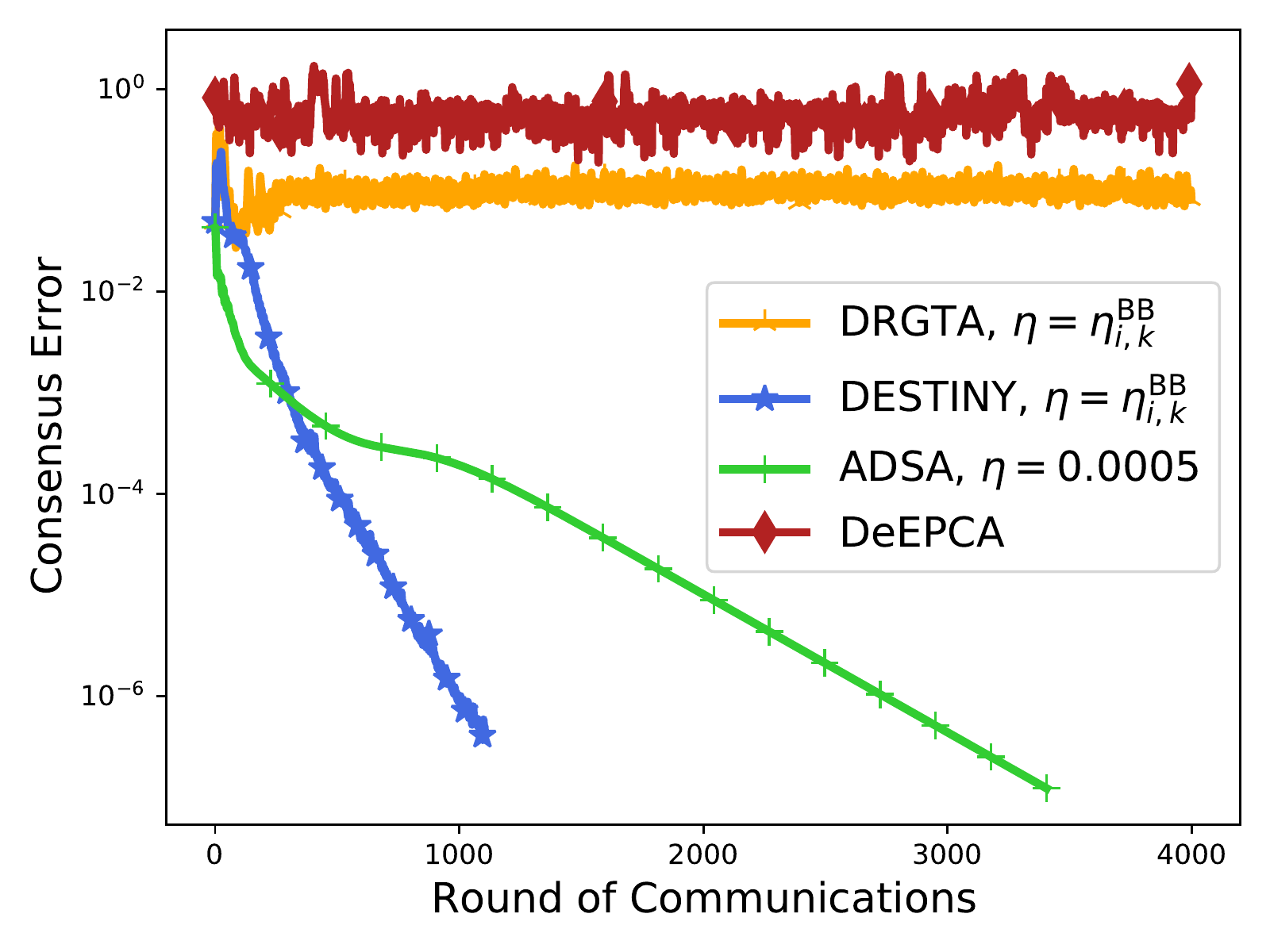}
		}
		\subfigure[$\mathtt{prob} = 0.4$]{
			\label{subfig:PCA_4_cons}
			\includegraphics[width=0.3\linewidth]{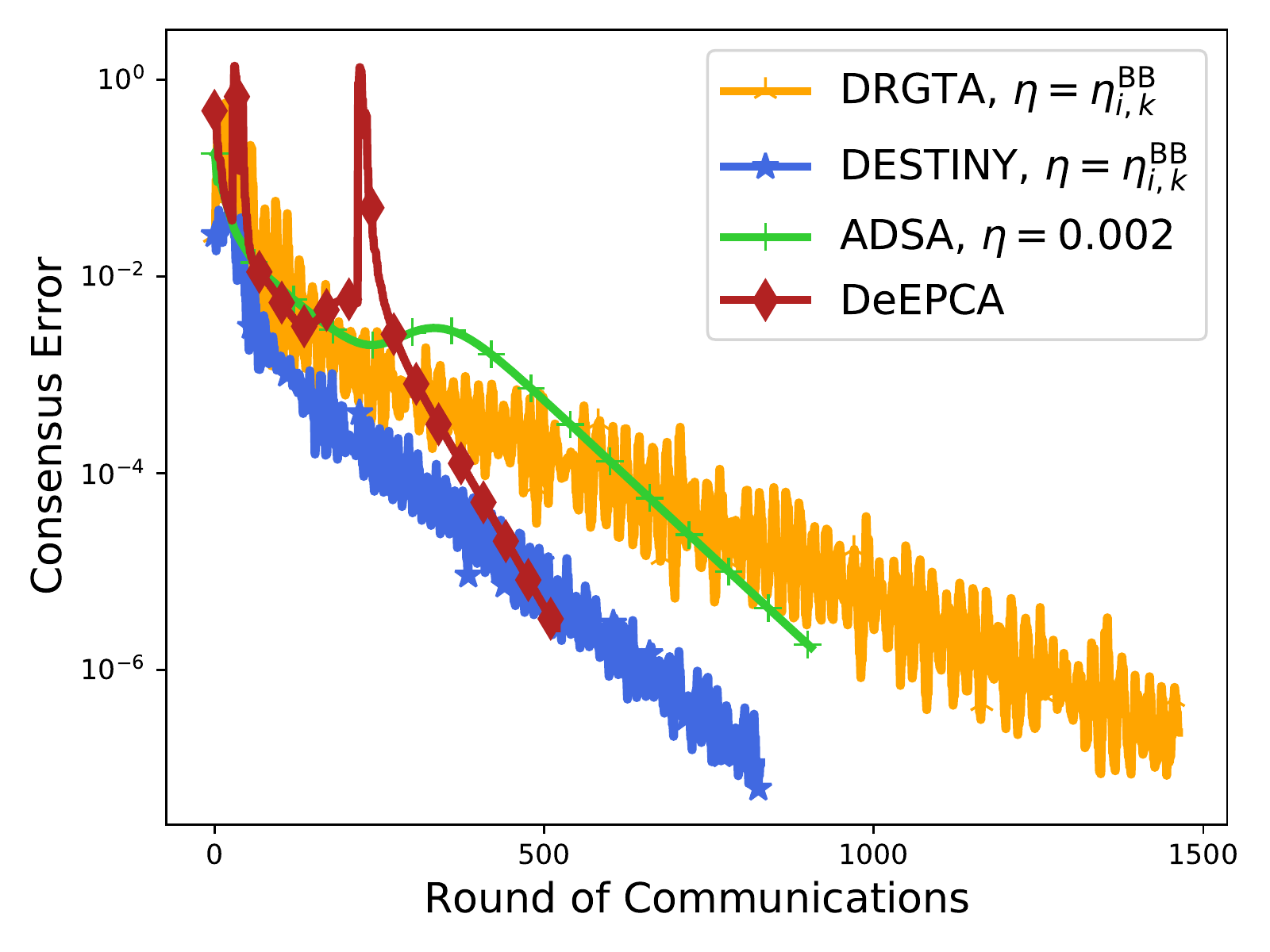}
		}
		\subfigure[$\mathtt{prob} = 0.6$]{
			\label{subfig:PCA_6_cons}
			\includegraphics[width=0.3\linewidth]{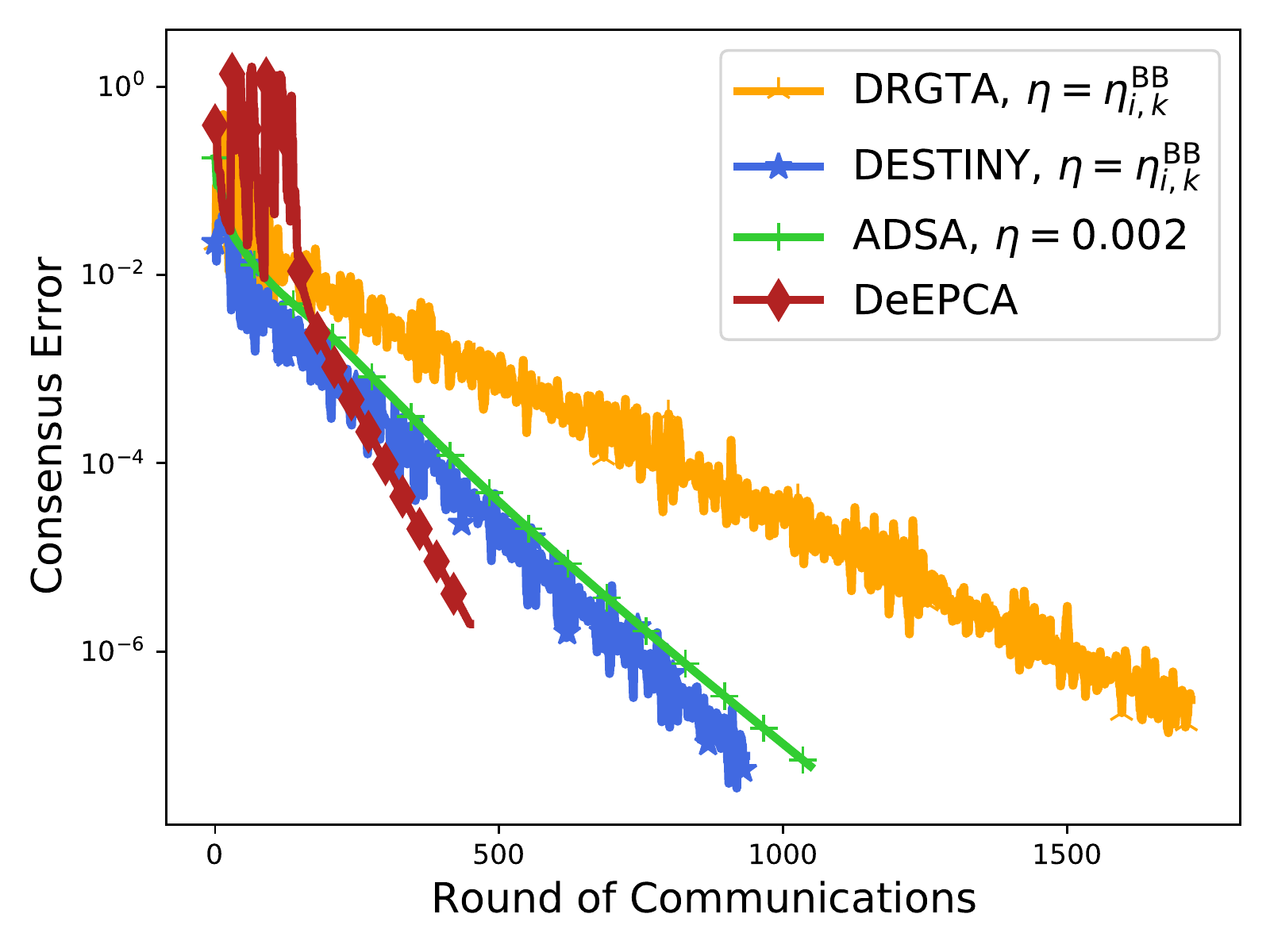}
		}
		
		\caption{Comparison of DRGTA, DESTINY, ADSA, and DeEPCA
			for different values of $\mathtt{prob}$ in solving decentralized PCA problems. 
			The figures in the first and second rows 
			depict substationarity violations and consensus errors, respectively.}
		\label{fig:PCA}
	\end{figure}

\subsection{Comparison on decentralized OLSR}

	In this subsection, the abilities of DESTINY and DRGTA \cite{Chen2021decentralized}
	are examined in solving decentralized OLSR problems \eqref{eq:opt-olsr}
	on the real-world text dataset Cora \cite{Mccallum2000automating}.
	For our testing in this case, we select a subset of Cora
	and extract the first $n = 100$ features to use,
	which contains research papers from three subfields, 
	including data structure (DS), hardware and architecture (HA), 
	and programming language (PL).
	The number of samples $(m)$ and classes $(p)$ are summarized as follows.
		
	\begin{itemize}
		
		\item Cora-DS: $m = 751$, $p = 9$.
		
		\item Cora-HA: $m = 400$, $p = 7$.
		
		\item Cora-PL: $m = 1575$, $p = 9$.
		
	\end{itemize}

	The detailed descriptions and download links of this dataset
	can be found in \cite{Zhang2020eigenvalue}.
	The number of agents is set to $d = 32$ in this experiment.
	We provide the numerical results in Figure \ref{fig:OLSR},
	which demonstrates that DESTINY attains better performances than DRGTA 
	in terms of both substationarity violations and consensus errors.
	Therefore, our algorithm manifests its efficiency in dealing with real-world datasets.
	
	\begin{figure}[ht!]
		\centering
		
		\subfigure[Cora-DS]{
			\label{subfig:OLSR_Cora_DS_kkt} 
			\includegraphics[width=0.3\linewidth]{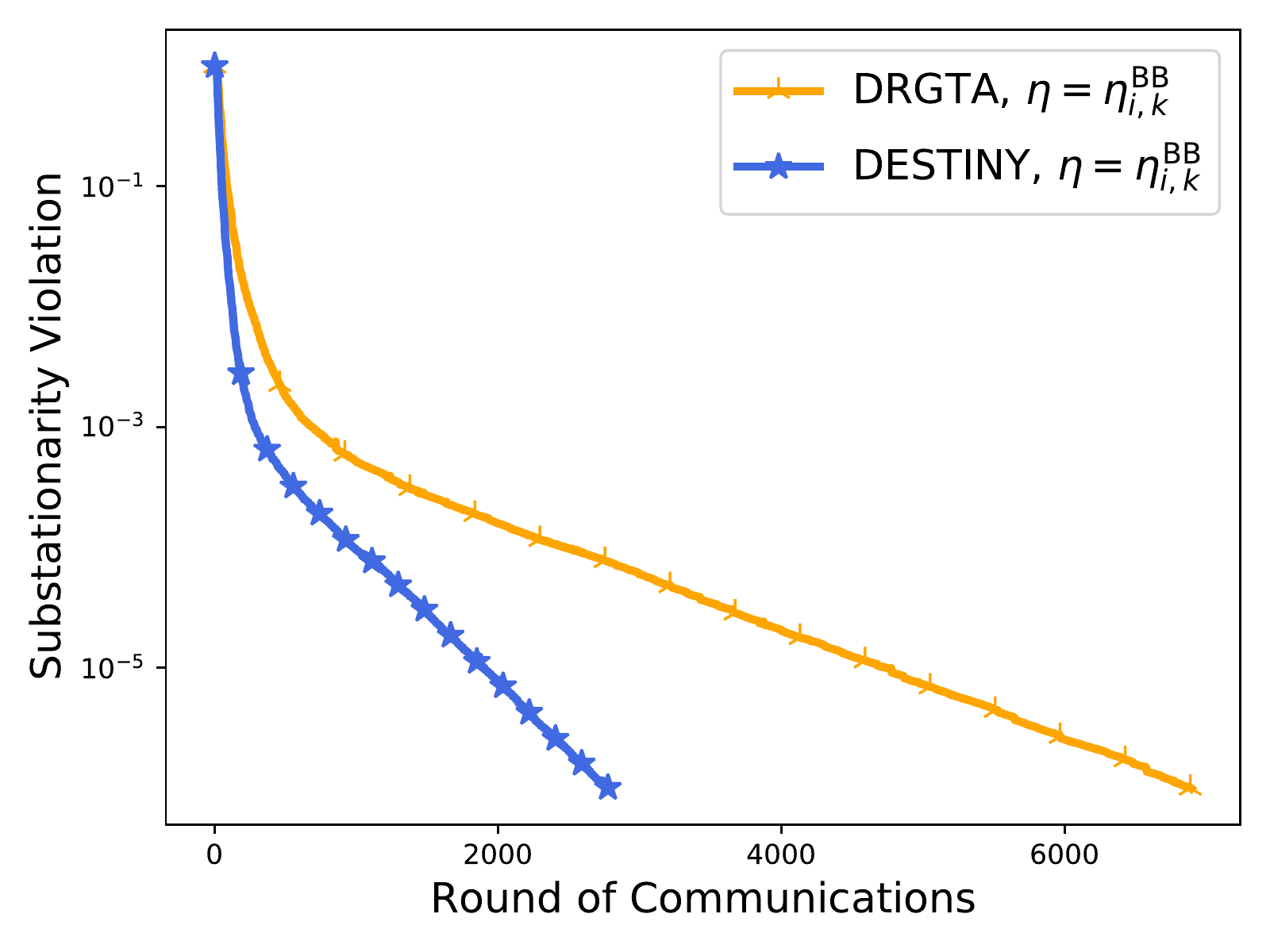}
		}
		\subfigure[Cora-HA]{
			\label{subfig:OLSR_Cora_HA_kkt}
			\includegraphics[width=0.3\linewidth]{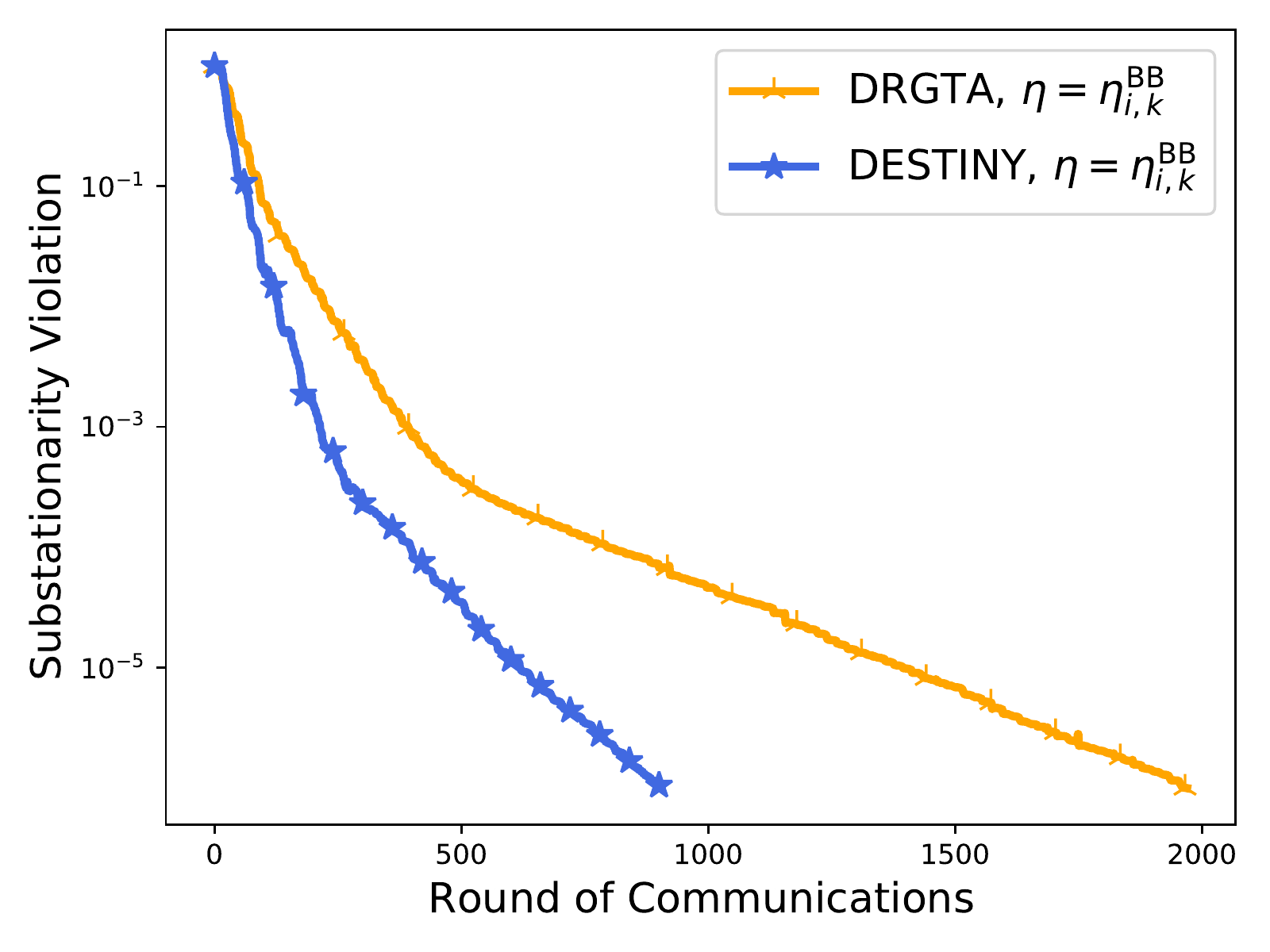}
		}
		\subfigure[Cora-PL]{
			\label{subfig:OLSR_Cora_OS_kkt}
			\includegraphics[width=0.3\linewidth]{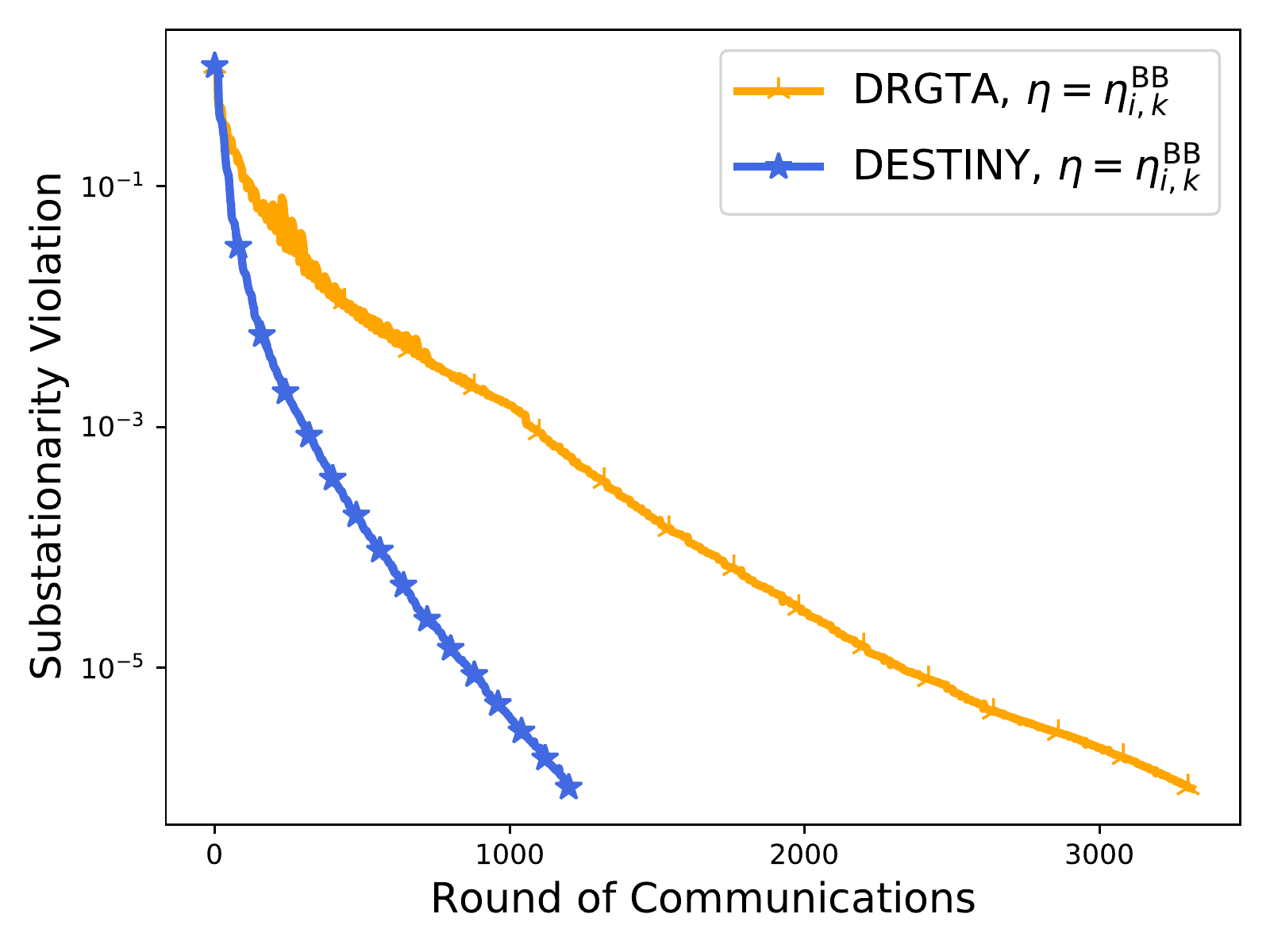}
		}
		
		\subfigure[Cora-DS]{
			\label{subfig:OLSR_Cora_DS_cons}
			\includegraphics[width=0.3\linewidth]{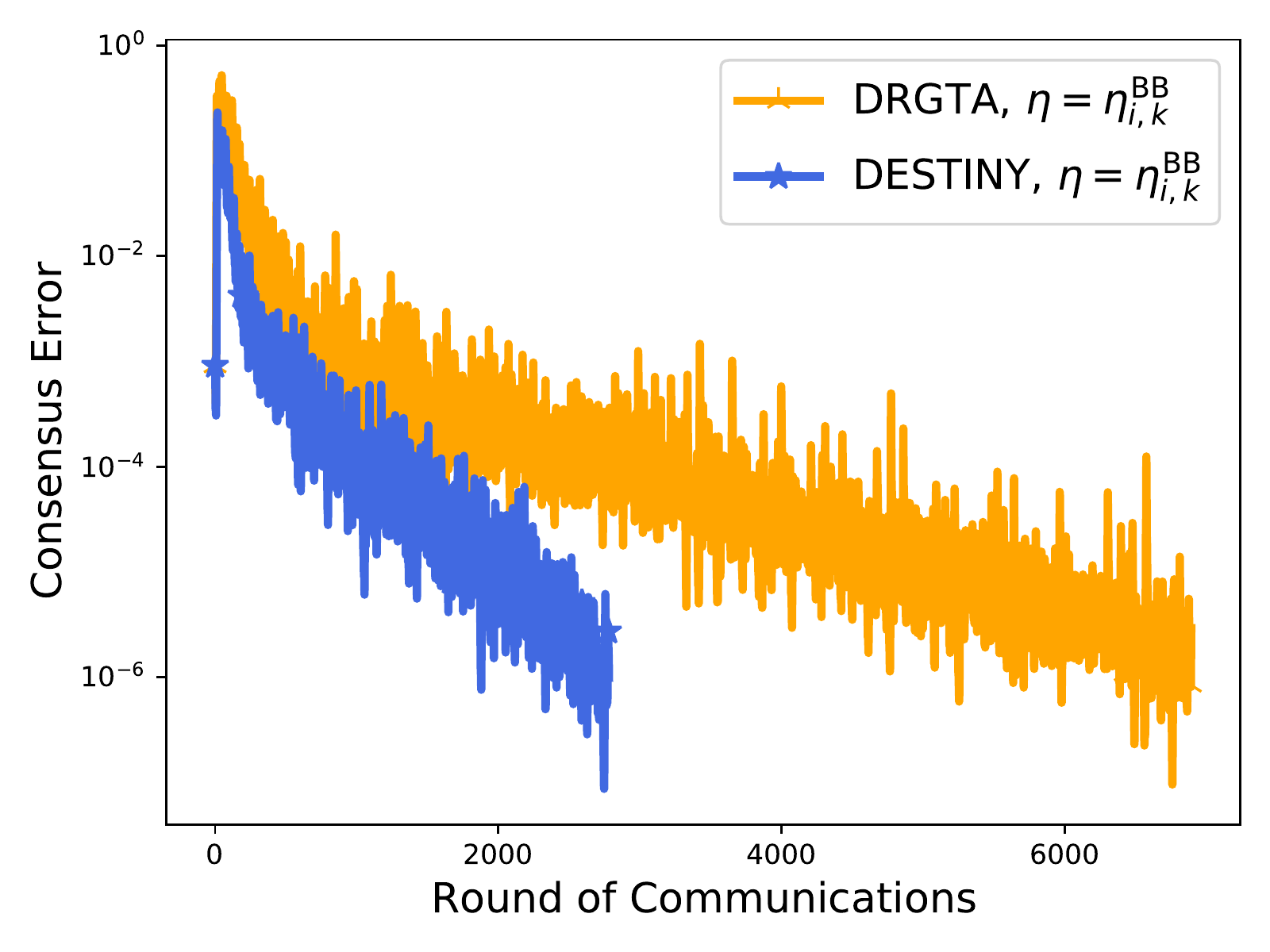}
		}
		\subfigure[Cora-HA]{
			\label{subfig:OLSR_Cora_HA_cons}
			\includegraphics[width=0.3\linewidth]{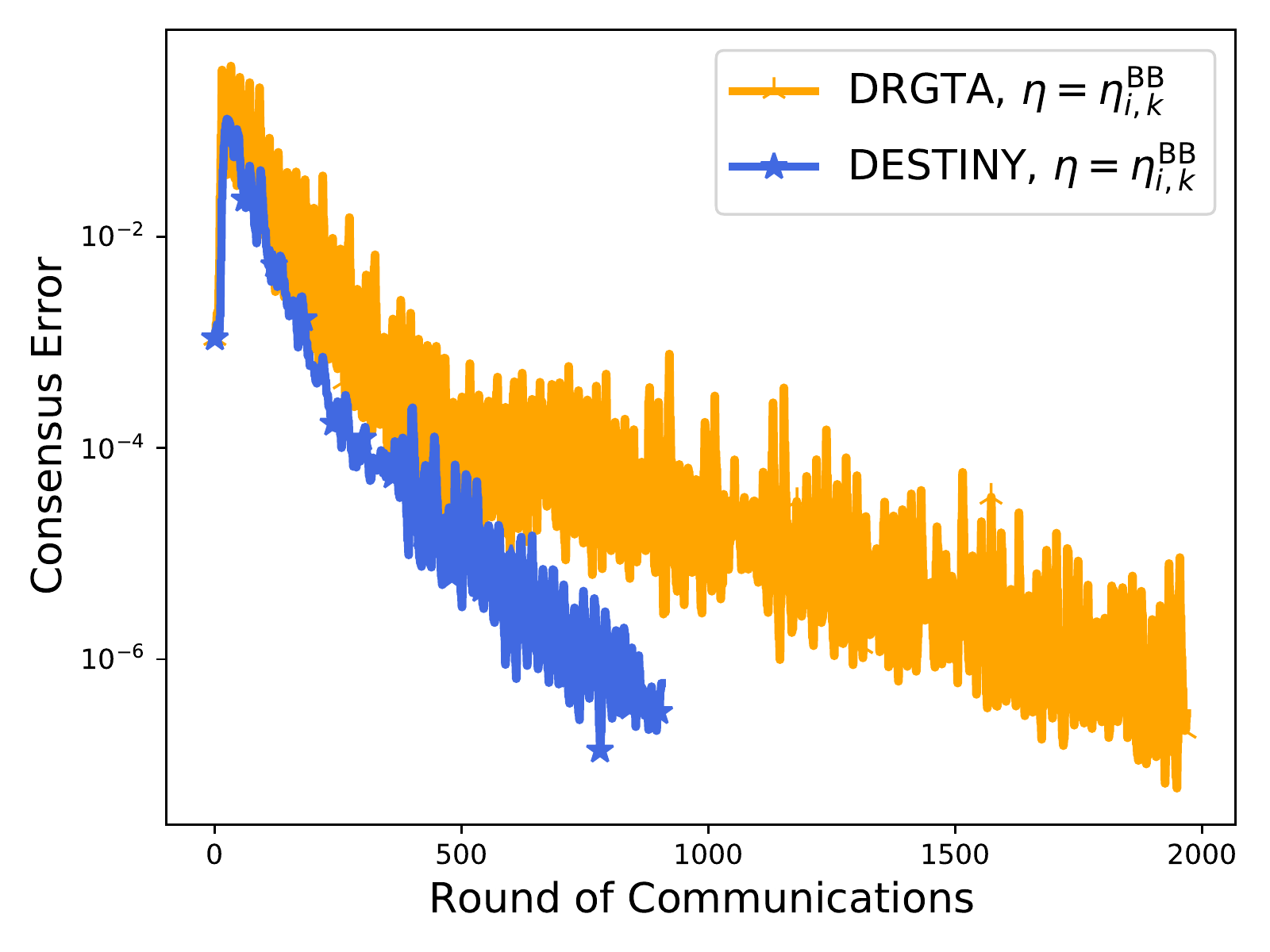}
		}
		\subfigure[Cora-PL]{
			\label{subfig:OLSR_Cora_OS_cons}
			\includegraphics[width=0.3\linewidth]{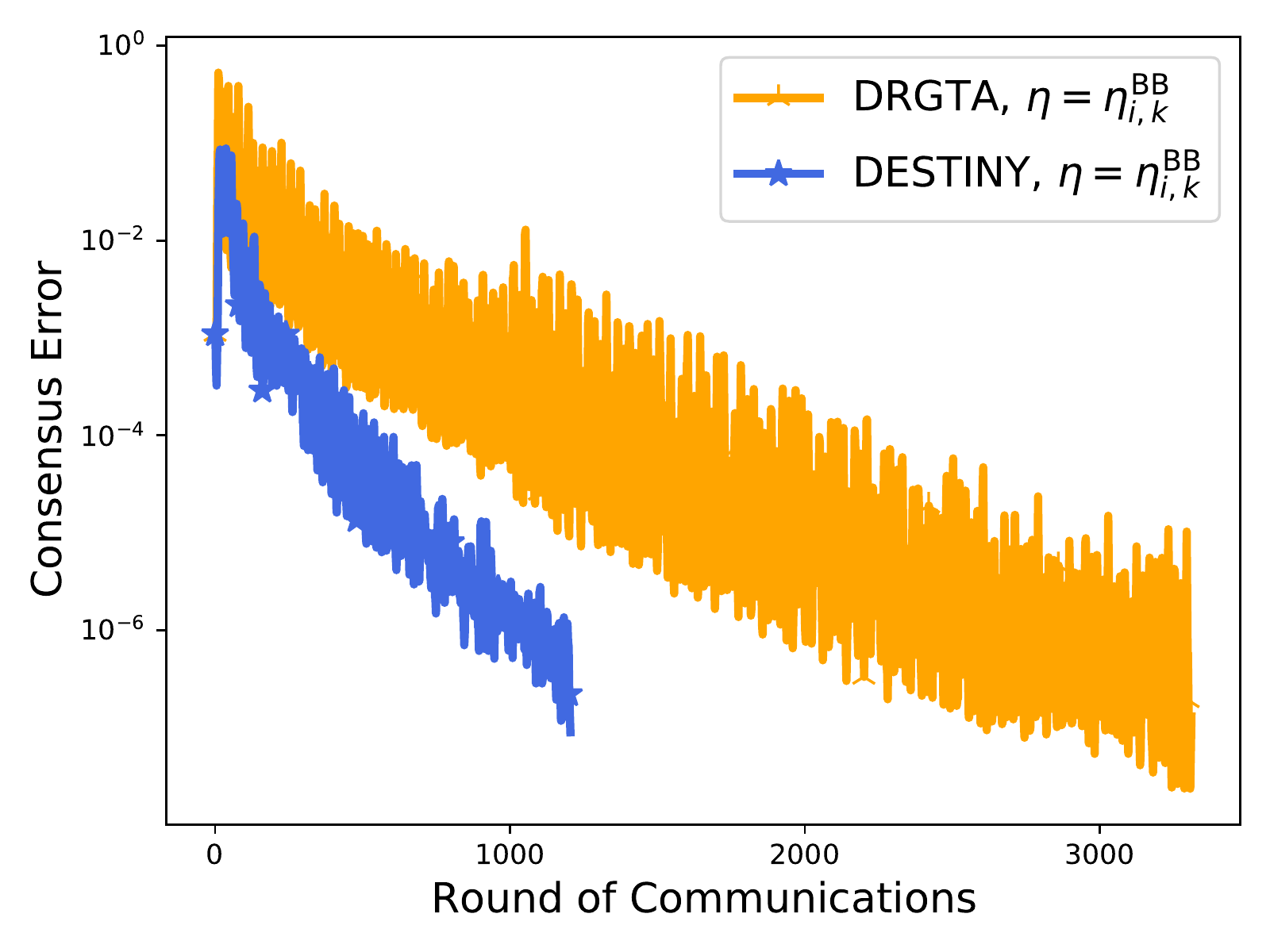}
		}
		
		\caption{Comparison between DRGTA and DESTINY
			for different datasets in solving decentralized OLSR problems. 
			The figures in the first and second rows 
			depict substationarity violations and consensus errors, respectively.}
		\label{fig:OLSR}
	\end{figure}

\subsection{Comparison on decentralized SDL}

	In this subsection, we evaluate the performance on decentralized SDL problems \eqref{eq:opt-sdl}
	of DESTINY and DRGTA \cite{Chen2021decentralized}.
	Three image datasets popular in machine learning research 
	are tested in the following experiments, including
	MNIST\footnote{Available from \url{http://yann.lecun.com/exdb/mnist/} }, 
	CIFAR-10\footnote{Available from \url{https://www.cs.toronto.edu/~kriz/cifar.html}
	\label{note:CIFAR}},
	and CIFAR-100$^{\ref{note:CIFAR}}$.
	We summarize the numbers of features $(n)$ and samples $(m)$ as follows.
	
	\begin{itemize}
		
		\item MNIST: $n = 784$, $m = 60000$.
		
		\item CIFAR-10: $n = 3072$, $m = 50000$.
		
		\item CIFAR-100: $n = 3072$, $m = 50000$.
		
	\end{itemize}
	
	For our testing, the numbers of computed bases and agents 
	are set to $p = 1$ and $d = 32$, respectively.
	Numerical results from this experiment are given in Figure \ref{fig:SDL}.
	It can be observed that DESTINY always dominates DRGTA 
	in terms of both substationarity violations and consensus errors.
	These results indicate that the observed superior performance of our algorithm
	is not just limited to PCA and OLSR problems.
	
	\begin{figure}[ht!]
		\centering
		
		\subfigure[MNIST]{
			\label{subfig:SDL_MNIST_kkt}
			\includegraphics[width=0.3\linewidth]{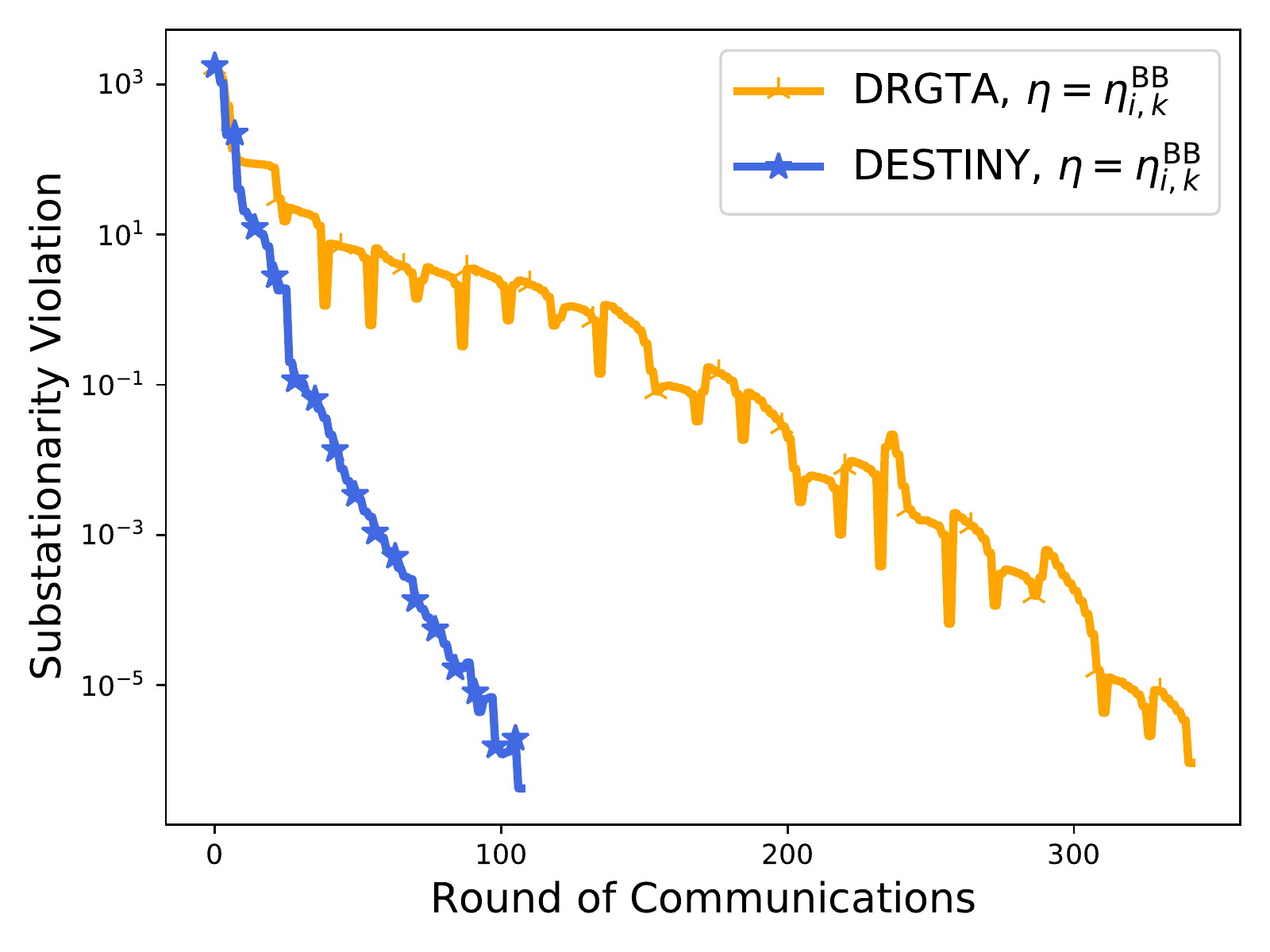}
		}
		\subfigure[CIFAR-10]{
			\label{subfig:SDL_CIFAR-10_kkt}
			\includegraphics[width=0.3\linewidth]{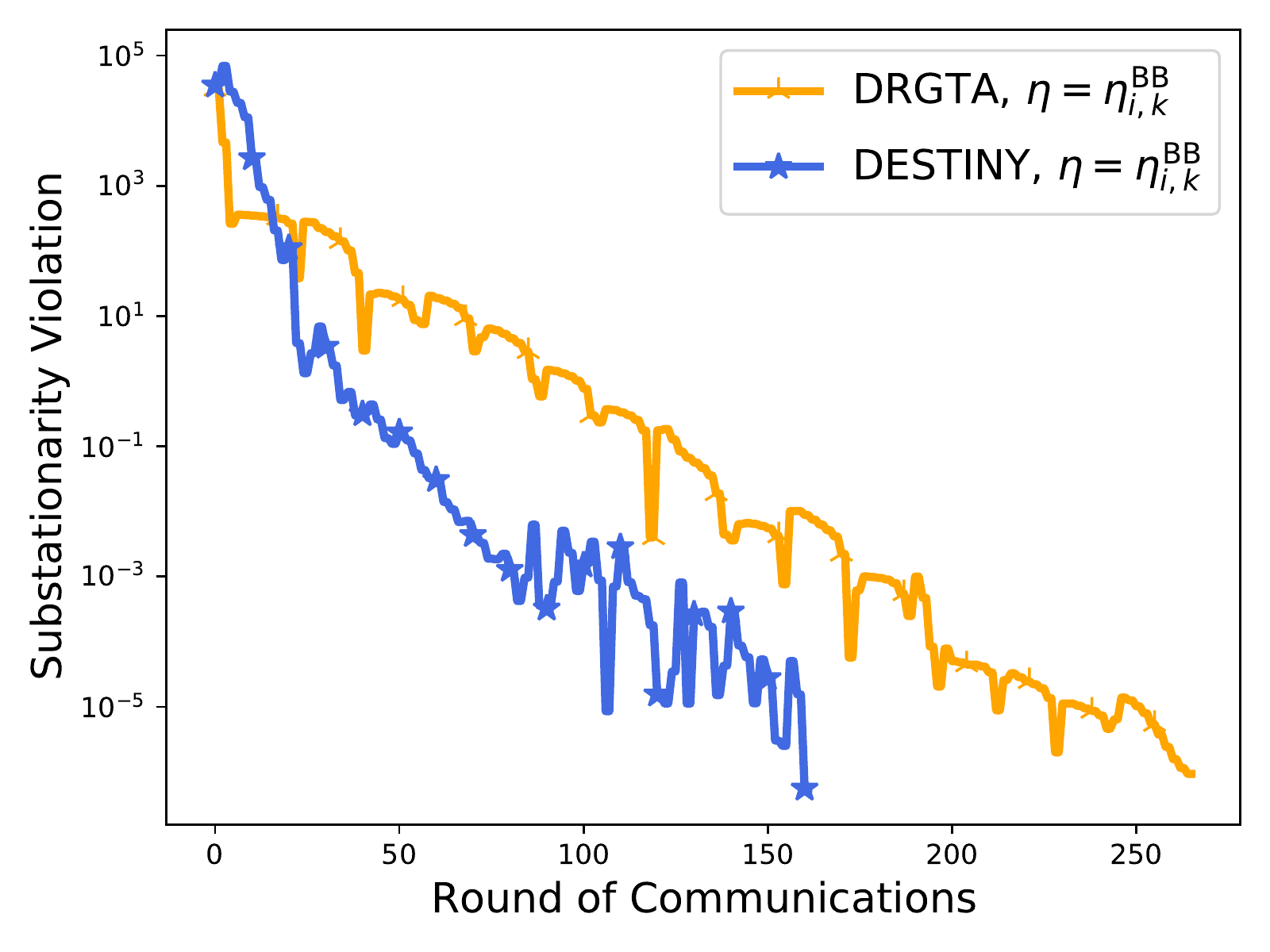}
		}
		\subfigure[CIFAR-100]{
			\label{subfig:SDL_CIFAR-100_kkt}
			\includegraphics[width=0.3\linewidth]{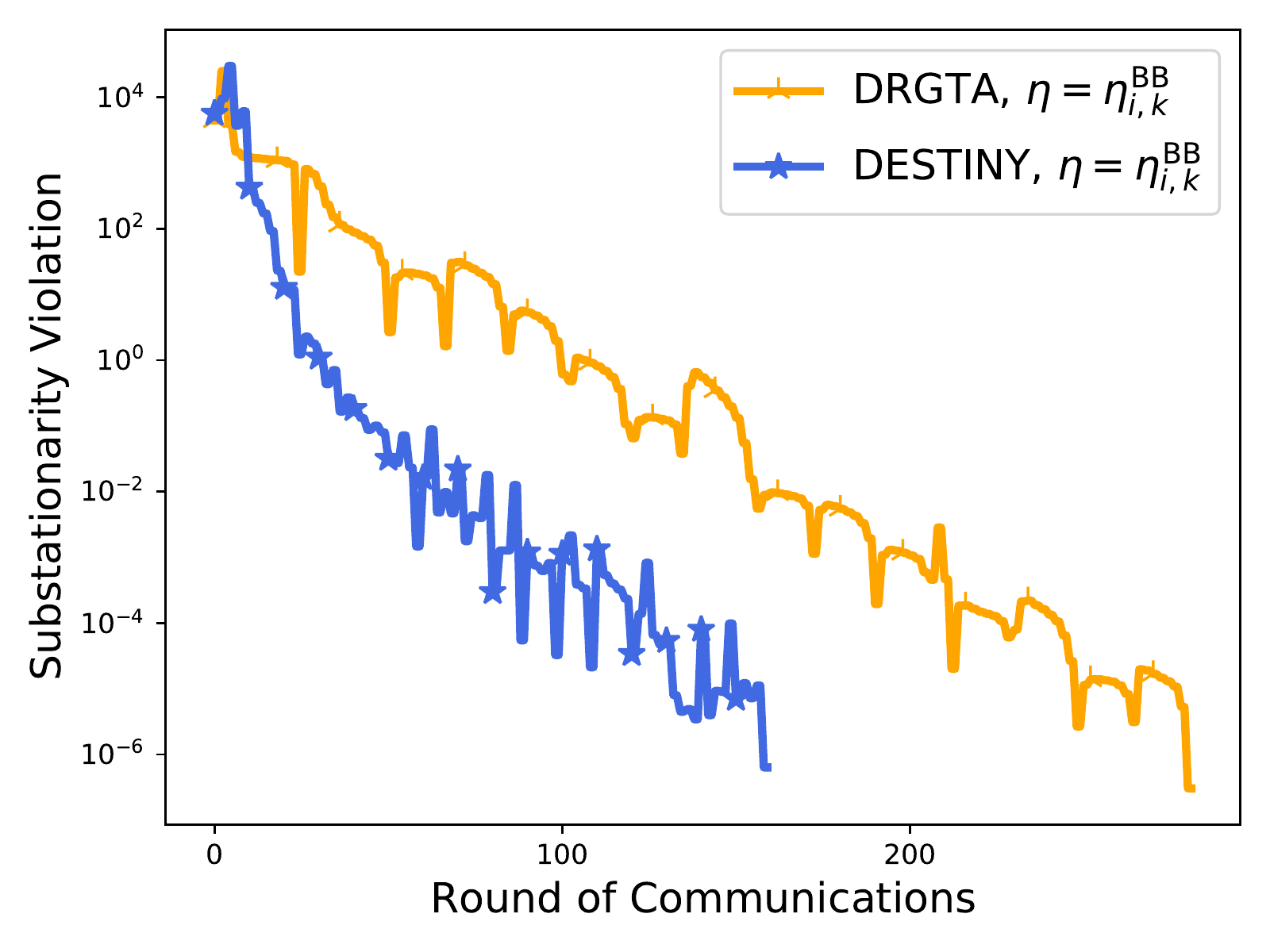}
		}
		
		\subfigure[MNIST]{
			\label{subfig:SDL_MNIST_cons}
			\includegraphics[width=0.3\linewidth]{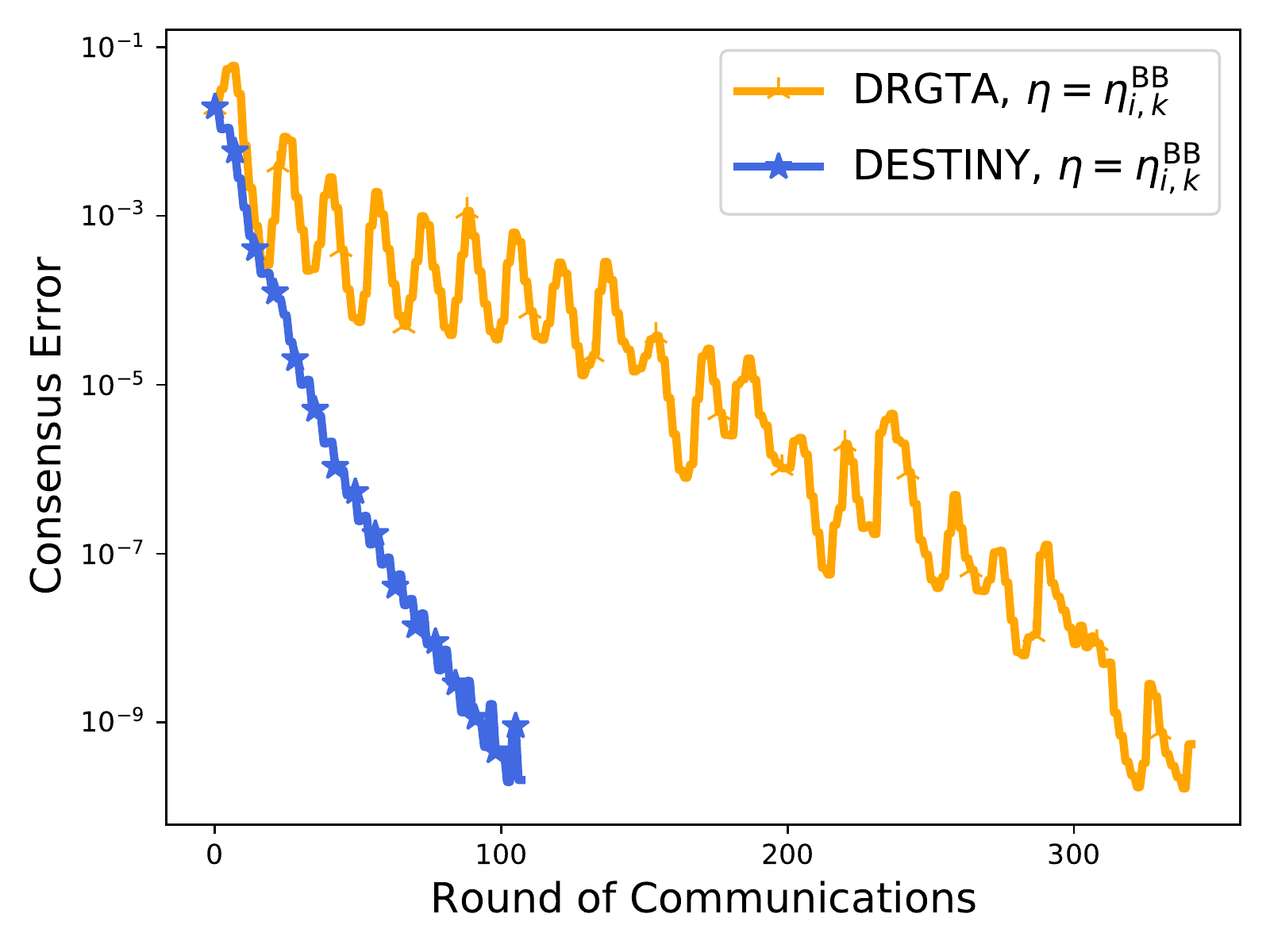}
		}
		\subfigure[CIFAR-10]{
			\label{subfig:SDL_CIFAR-10_cons}
			\includegraphics[width=0.3\linewidth]{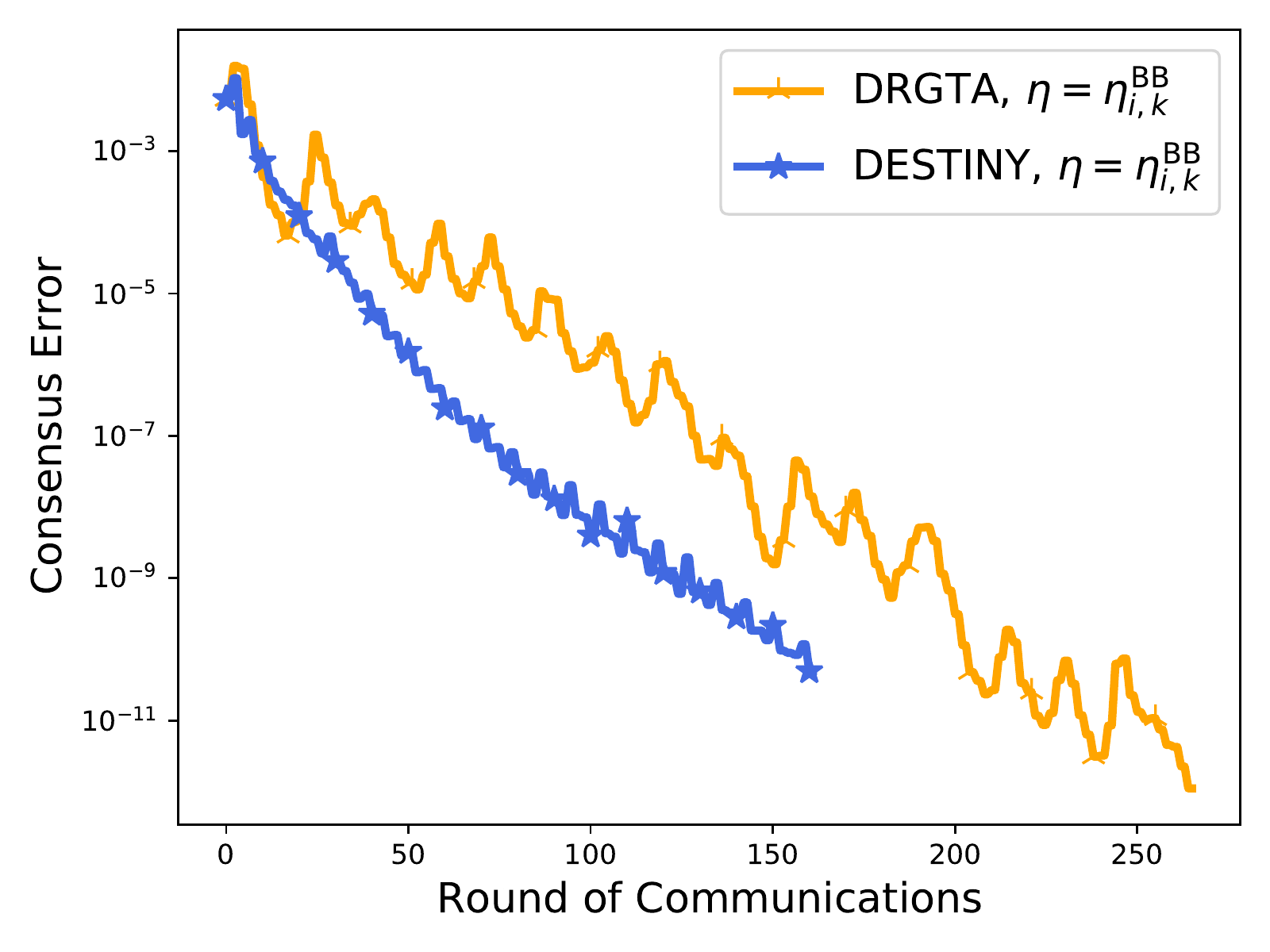}
		}
		\subfigure[CIFAR-100]{
			\label{subfig:SDL_CIFAR-100_cons}
			\includegraphics[width=0.3\linewidth]{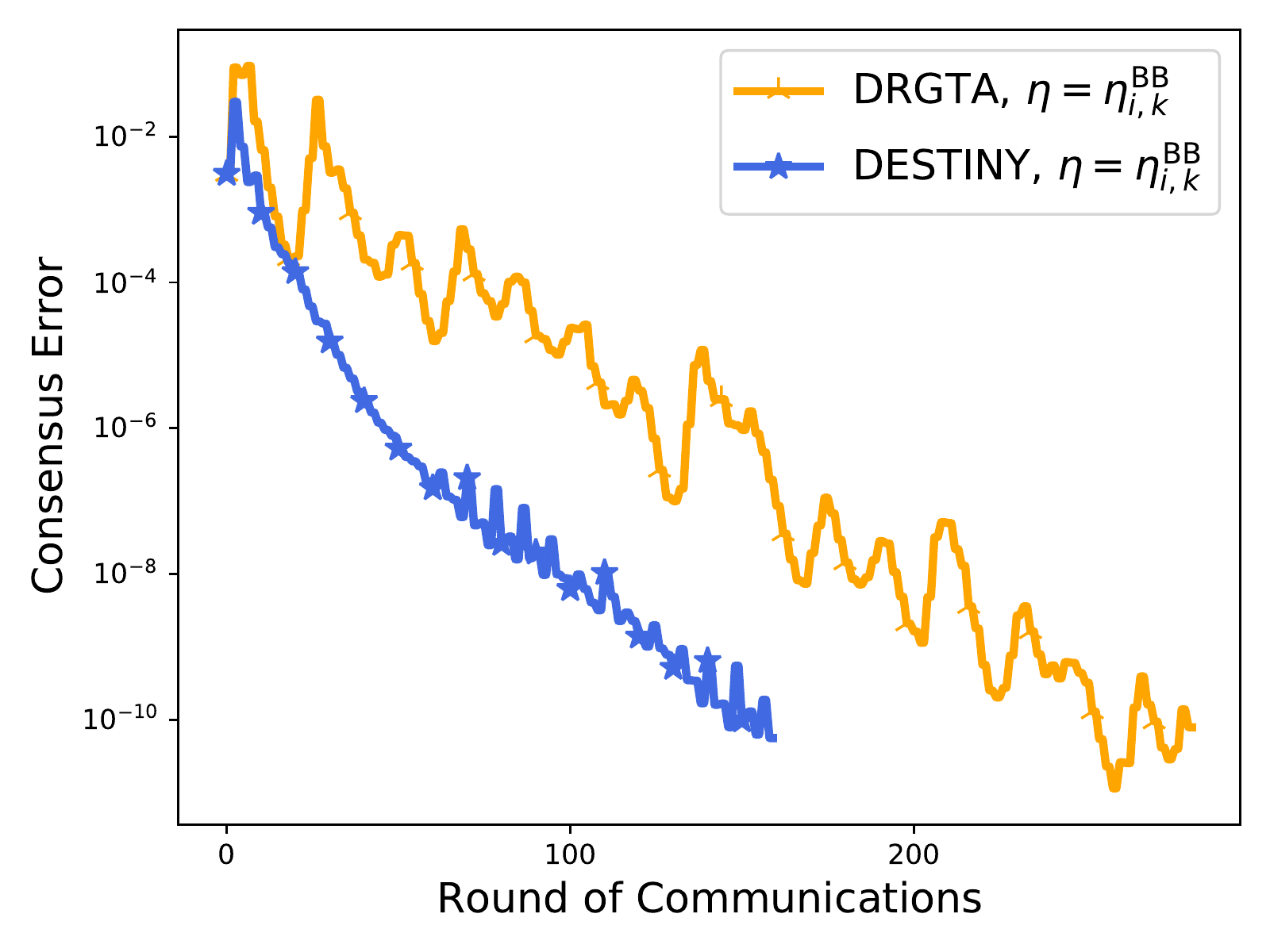}
		}
		
		\caption{Comparison between DRGTA and DESTINY
			for different datasets in solving decentralized SDL problems. 
			The figures in the first and second rows 
			depict substationarity violations and consensus errors, respectively.}
		\label{fig:SDL}
	\end{figure}

\section{Conclusions}\label{eq:conclusion}

	As a powerful and fundamental tool,
	Riemannian optimization has been adapted to 
	solving consensus problems on a multi-agent network,
	giving rise to the decentralized Riemannian gradient tracking algorithm (DRGTA) 
	on the Stiefel manifold.
	In order to guarantee the convergence, 
	DRGTA requires multiple rounds of communications per iteration,
	and hence, the communication overheads are very high.
	In this paper, we propose an infeasible decentralized algorithm, called DESTINY,
	which only invokes a single round of communications per iteration.
	DESTINY seeks a consensus in the Euclidean space directly
	by resorting to an elegant approximate augmented Lagrangian function.
	Even in the non-distributed setting, the proposed algorithm is still new
	for optimization problems over the Stiefel manifold.
	
	Most existing works on analyzing the convergence of gradient tracking based algorithms
	impose stringent assumptions on objective functions,
	such as global Lipschitz smoothness and coerciveness.
	In our case of DESTINY which tackles decentralized optimization problems 
	with nonconvex constraints,
	we are able to derive the global convergence and worst case complexity 
	under rather mild conditions.
	The objective function is only assumed to be locally Lipschitz smooth.
	
	Comprehensive numerical comparisons are carried out under the distributed setting 
	with test results strongly in favor of DESTINY.
	Most notably, the rounds of communications required by DESTINY
	are significantly fewer than what are required by DRGTA.
	
	Finally, some topics are worthy of future studies to fully understand the behavior 
	and realize the potential of DESTINY and its variants.
	For example, the asynchronous version of DESTINY is worthy of investigating
	to address the load balance issues in the distributed environments.
	And more work is needed to adapt DESTINY to the dynamic network settings
	so that it can find a wider range of applications.

\section*{Acknowledgment}

	The authors would like to thank Prof. Lei-Hong Zhang for sharing the Cora dataset,
	and Prof. Qing Ling and Dr. Nachuan Xiao for their insightful discussions.	


\bibliographystyle{siam}

\bibliography{library}

\begin{thebibliography}{10}

\bibitem{Abrudan2008}
{\sc T.~E. Abrudan, J.~Eriksson, and V.~Koivunen}, {\em {Steepest descent
  algorithms for optimization under unitary matrix constraint}}, {IEEE}
  Transactions on Signal Processing, 56 (2008), pp.~1134--1147.

\bibitem{Absil2006}
{\sc P.-A. Absil, C.~G. Baker, and K.~A. Gallivan}, {\em {Trust-region methods
  on Riemannian manifolds}}, Foundations of Computational Mathematics, 7
  (2006), pp.~303--330.

\bibitem{Absil2008}
{\sc P.-A. Absil, R.~Mahony, and R.~Sepulchre}, {\em {Optimization algorithms
  on matrix manifolds}}, Princeton University Press, 2008.

\bibitem{Chang2015multi}
{\sc T.-H. Chang, M.~Hong, and X.~Wang}, {\em {Multi-agent distributed
  optimization via inexact consensus ADMM}}, IEEE Transactions on Signal
  Processing, 63 (2015), pp.~482--497.

\bibitem{Chen2021decentralized}
{\sc S.~Chen, A.~Garcia, M.~Hong, and S.~Shahrampour}, {\em {Decentralized
  Riemannian gradient descent on the Stiefel manifold}}, in Proceedings of the
  38th International Conference on Machine Learning, vol.~139, PMLR, 2021,
  pp.~1594--1605.

\bibitem{Chen2021accelerating}
{\sc Y.~Chen, K.~Yuan, Y.~Zhang, P.~Pan, Y.~Xu, and W.~Yin}, {\em {Accelerating
  gossip SGD with periodic global averaging}}, in Proceedings of the 38th
  International Conference on Machine Learning, vol.~139, PMLR, 2021,
  pp.~1791--1802.

\bibitem{Daneshmand2020}
{\sc A.~Daneshmand, G.~Scutari, and V.~Kungurtsev}, {\em {Second-order
  guarantees of distributed gradient algorithms}}, SIAM Journal on
  Optimization, 30 (2020), pp.~3029--3068.

\bibitem{Di2016}
{\sc P.~Di~Lorenzo and G.~Scutari}, {\em {Next: In-network nonconvex
  optimization}}, IEEE Transactions on Signal and Information Processing over
  Networks, 2 (2016), pp.~120--136.

\bibitem{Edelman1998}
{\sc A.~Edelman, T.~A. Arias, and S.~T. Smith}, {\em {The geometry of
  algorithms with orthogonality constraints}}, SIAM Journal on Matrix Analysis
  and Applications, 20 (1998), pp.~303--353.

\bibitem{Gang2021fast}
{\sc A.~Gang and W.~U. Bajwa}, {\em {FAST-PCA: A fast and exact algorithm for
  distributed principal component analysis}}, arXiv:2108.12373,  (2021).

\bibitem{Gang2022linearly}
{\sc A.~Gang and W.~U. Bajwa}, {\em {A linearly convergent algorithm for
  distributed principal component analysis}}, Signal Processing, 193 (2022),
  p.~108408.

\bibitem{Gang2019}
{\sc A.~Gang, H.~Raja, and W.~U. Bajwa}, {\em {Fast and communication-efficient
  distributed PCA}}, in Proceedings of the 2019 International Conference on
  Acoustics, Speech and Signal Processing, IEEE, 2019, pp.~7450--7454.

\bibitem{Gao2018}
{\sc B.~Gao, X.~Liu, X.~Chen, and Y.-X. Yuan}, {\em {A new first-order
  algorithmic framework for optimization problems with orthogonality
  constraints}}, SIAM Journal on Optimization, 28 (2018), pp.~302--332.

\bibitem{Gao2019}
{\sc B.~Gao, X.~Liu, and Y.-X. Yuan}, {\em {Parallelizable algorithms for
  optimization problems with orthogonality constraints}}, {SIAM} Journal on
  Scientific Computing, 41 (2019), pp.~A1949--A1983.

\bibitem{Gao2022achieving}
{\sc J.~Gao, X.-W. Liu, Y.-H. Dai, Y.~Huang, and P.~Yang}, {\em {Achieving
  geometric convergence for distributed optimization with Barzilai-Borwein step
  sizes}}, Science China Information Sciences, 65 (2022), pp.~1--2.

\bibitem{Hajinezhad2019}
{\sc D.~Hajinezhad and M.~Hong}, {\em {Perturbed proximal primal--dual
  algorithm for nonconvex nonsmooth optimization}}, Mathematical Programming,
  176 (2019), pp.~207--245.

\bibitem{Hong2017}
{\sc M.~Hong, D.~Hajinezhad, and M.-M. Zhao}, {\em {Prox-PDA: The proximal
  primal-dual algorithm for fast distributed nonconvex optimization and
  learning over networks}}, in International Conference on Machine Learning,
  PMLR, 2017, pp.~1529--1538.

\bibitem{Hu2019}
{\sc J.~Hu, B.~Jiang, L.~Lin, Z.~Wen, and Y.-X. Yuan}, {\em {Structured
  quasi-Newton methods for optimization with orthogonality constraints}}, SIAM
  Journal on Scientific Computing, 41 (2019), pp.~A2239--A2269.

\bibitem{Hu2018}
{\sc J.~Hu, A.~Milzarek, Z.~Wen, and Y.~Yuan}, {\em {Adaptive quadratically
  regularized Newton method for Riemannian optimization}}, SIAM Journal on
  Matrix Analysis and Applications, 39 (2018), pp.~1181--1207.

\bibitem{Jiang2015}
{\sc B.~Jiang and Y.-H. Dai}, {\em {A framework of constraint preserving update
  schemes for optimization on Stiefel manifold}}, Mathematical Programming, 153
  (2015), pp.~535--575.

\bibitem{Krasulina1970method}
{\sc T.~Krasulina}, {\em {Method of stochastic approximation in the
  determination of the largest eigenvalue of the mathematical expectation of
  random matrices}}, Automatation and Remote Control, 2 (1970), pp.~50--56.

\bibitem{Ling2015}
{\sc Q.~Ling, W.~Shi, G.~Wu, and A.~Ribeiro}, {\em {DLM: Decentralized
  linearized alternating direction method of multipliers}}, IEEE Transactions
  on Signal Processing, 63 (2015), pp.~4051--4064.

\bibitem{Liu2019c}
{\sc Y.~Liu, W.~Xu, G.~Wu, Z.~Tian, and Q.~Ling}, {\em {Communication-censored
  ADMM for decentralized consensus optimization}}, IEEE Transactions on Signal
  Processing, 67 (2019), pp.~2565--2579.

\bibitem{Manton2002}
{\sc J.~H. Manton}, {\em {Optimization algorithms exploiting unitary
  constraints}}, IEEE Transactions on Signal Processing, 50 (2002),
  pp.~635--650.

\bibitem{Mccallum2000automating}
{\sc A.~K. McCallum, K.~Nigam, J.~Rennie, and K.~Seymore}, {\em {Automating the
  construction of internet portals with machine learning}}, Information
  Retrieval, 3 (2000), pp.~127--163.

\bibitem{Nedic2017}
{\sc A.~Nedic, A.~Olshevsky, and W.~Shi}, {\em {Achieving geometric convergence
  for distributed optimization over time-varying graphs}}, SIAM Journal on
  Optimization, 27 (2017), pp.~2597--2633.

\bibitem{Nedic2009}
{\sc A.~Nedic and A.~Ozdaglar}, {\em {Distributed subgradient methods for
  multi-agent optimization}}, IEEE Transactions on Automatic Control, 54
  (2009), pp.~48--61.

\bibitem{Nishimori2005}
{\sc Y.~Nishimori and S.~Akaho}, {\em {Learning algorithms utilizing
  quasi-geodesic flows on the Stiefel manifold}}, Neurocomputing, 67 (2005),
  pp.~106--135.

\bibitem{Pillai2005perron}
{\sc S.~U. Pillai, T.~Suel, and S.~Cha}, {\em {The Perron-Frobenius theorem:
  some of its applications}}, IEEE Signal Processing Magazine, 22 (2005),
  pp.~62--75.

\bibitem{Qu2017}
{\sc G.~Qu and N.~Li}, {\em {Harnessing smoothness to accelerate distributed
  optimization}}, IEEE Transactions on Control of Network Systems, 5 (2017),
  pp.~1245--1260.

\bibitem{Ram2010}
{\sc S.~S. Ram, A.~Nedi{\'c}, and V.~V. Veeravalli}, {\em {Distributed
  stochastic subgradient projection algorithms for convex optimization}},
  Journal of Optimization Theory and Applications, 147 (2010), pp.~516--545.

\bibitem{Sanger1989}
{\sc T.~D. Sanger}, {\em {Optimal unsupervised learning in a single-layer
  linear feedforward neural network}}, Neural Networks, 2 (1989), pp.~459--473.

\bibitem{Sato2016dai}
{\sc H.~Sato}, {\em {A Dai--Yuan-type Riemannian conjugate gradient method with
  the weak Wolfe conditions}}, Computational Optimization and Applications, 64
  (2016), pp.~101--118.

\bibitem{Schizas2015}
{\sc I.~D. Schizas and A.~Aduroja}, {\em {A distributed framework for
  dimensionality reduction and denoising}}, IEEE Transactions on Signal
  Processing, 63 (2015), pp.~6379--6394.

\bibitem{Scutari2019}
{\sc G.~Scutari and Y.~Sun}, {\em {Distributed nonconvex constrained
  optimization over time-varying digraphs}}, Mathematical Programming, 176
  (2019), pp.~497--544.

\bibitem{Shi2015}
{\sc W.~Shi, Q.~Ling, G.~Wu, and W.~Yin}, {\em {EXTRA: An exact first-order
  algorithm for decentralized consensus optimization}}, SIAM Journal on
  Optimization, 25 (2015), pp.~944--966.

\bibitem{Stiefel1935}
{\sc E.~Stiefel}, {\em {Richtungsfelder und fernparallelismus in
  n-dimensionalen mannigfaltigkeiten}}, Commentarii Mathematici Helvetici, 8
  (1935), pp.~305--353.

\bibitem{Wang2021multipliers}
{\sc L.~Wang, B.~Gao, and X.~Liu}, {\em {Multipliers correction methods for
  optimization problems over the Stiefel manifold}}, CSIAM Transactions on
  Applied Mathematics, 2 (2021), pp.~508--531.

\bibitem{Wang2020distributed}
{\sc L.~Wang, X.~Liu, and Y.~Zhang}, {\em {A distributed and secure algorithm
  for computing dominant SVD based on projection splitting}}, arXiv:2012.03461,
   (2020).

\bibitem{Wang2021communication}
\leavevmode\vrule height 2pt depth -1.6pt width 23pt, {\em {A
  communication-efficient and privacy-aware distributed algorithm for sparse
  PCA}}, arXiv:2106.03320,  (2021).

\bibitem{Wen2013}
{\sc Z.~Wen and W.~Yin}, {\em {A feasible method for optimization with
  orthogonality constraints}}, Mathematical Programming, 142 (2013),
  pp.~397--434.

\bibitem{Xiao2004}
{\sc L.~Xiao and S.~Boyd}, {\em {Fast linear iterations for distributed
  averaging}}, Systems \& Control Letters, 53 (2004), pp.~65--78.

\bibitem{Xiao2021solving}
{\sc N.~Xiao and X.~Liu}, {\em {Solving optimization problems over the Stiefel
  manifold by smooth exact penalty function}}, arXiv:2110.08986,  (2021).

\bibitem{Xiao2020}
{\sc N.~Xiao, X.~Liu, and Y.-X. Yuan}, {\em {A class of smooth exact penalty
  function methods for optimization problems with orthogonality constraints}},
  Optimization Methods and Software,  (2020), pp.~1--37.

\bibitem{Xiao2021penalty}
\leavevmode\vrule height 2pt depth -1.6pt width 23pt, {\em {A penalty-free
  infeasible approach for a class of nonsmooth optimization problems over the
  Stiefel manifold}}, arXiv:2103.03514,  (2021).

\bibitem{Xu2015}
{\sc J.~Xu, S.~Zhu, Y.~C. Soh, and L.~Xie}, {\em {Augmented distributed
  gradient methods for multi-agent optimization under uncoordinated constant
  stepsizes}}, in 54th IEEE Conference on Decision and Control, IEEE, 2015,
  pp.~2055--2060.

\bibitem{Ye2021}
{\sc H.~Ye and T.~Zhang}, {\em {DeEPCA: Decentralized exact PCA with linear
  convergence rate}}, Journal of Machine Learning Research, 22 (2021),
  pp.~1--27.

\bibitem{Yuan2016}
{\sc K.~Yuan, Q.~Ling, and W.~Yin}, {\em {On the convergence of decentralized
  gradient descent}}, SIAM Journal on Optimization, 26 (2016), pp.~1835--1854.

\bibitem{Zeng2018}
{\sc J.~Zeng and W.~Yin}, {\em On nonconvex decentralized gradient descent},
  IEEE Transactions on Signal Processing, 66 (2018), pp.~2834--2848.

\bibitem{Zhai2020}
{\sc Y.~Zhai, Z.~Yang, Z.~Liao, J.~Wright, and Y.~Ma}, {\em {Complete
  dictionary learning via $\ell_4$-norm maximization over the orthogonal
  group}}, Journal of Machine Learning Research, 21 (2020), pp.~1--68.

\bibitem{Zhang2020eigenvalue}
{\sc L.-H. Zhang, W.~H. Yang, C.~Shen, and J.~Ying}, {\em {An eigenvalue-based
  method for the unbalanced Procrustes problem}}, SIAM Journal on Matrix
  Analysis and Applications, 41 (2020), pp.~957--983.

\bibitem{Zhao2016}
{\sc H.~Zhao, Z.~Wang, and F.~Nie}, {\em {Orthogonal least squares regression
  for feature extraction}}, Neurocomputing, 216 (2016), pp.~200--207.

\bibitem{Zhu2017riemannian}
{\sc X.~Zhu}, {\em {A Riemannian conjugate gradient method for optimization on
  the Stiefel manifold}}, Computational Optimization and Applications, 67
  (2017), pp.~73--110.

\end{thebibliography}

\addcontentsline{toc}{section}{References}

\end{document}